\documentclass[11pt,reqno]{amsart}
\usepackage{latexsym,amsmath,amssymb,mathrsfs,xcolor,amsthm}
\usepackage{enumerate}
\usepackage{graphicx}
\usepackage{units}
\usepackage{ esint }
\usepackage{comment}
\usepackage{url}

\numberwithin{equation}{section}

plus 6pt minus 12pt
\newtheorem{lemma}{Lemma}[section]

\renewcommand{\d}{\,{\rm d}}

\newtheorem{theorem}{Theorem}

\title{Sharp Transitions for Localized Solutions to a Diophantine Inequality } 
\author{Ataleshvara Bhargava}
\subjclass[2020]{11D75, 11P05, 11P55}

\begin{document}

\begin{abstract} For fixed $\tau > 0$, non-integer $\theta > 2$ and large enough $s$, we investigate the number of solutions to the Diophantine inequality $|x_1^{\theta}+\cdots +x_s^{\theta} - R| < \tau$ as $R \to \infty$. Here, we restrict the variables $x_i$ in the ``almost diagonal" range $X-Y < x_i \leq X+Y$ for $i = 1, \ldots, s$, where $X = (R/s)^{1/\theta}$ and $Y \asymp \sqrt{X}$. Let $\omega = (\lfloor s/2 \rfloor(\theta-1))^{-1/2}$. We will show that if $Y = c\sqrt{X}$ for some $c > \omega$ then for sufficiently large $R$ there must always exist solutions, but if $c < \omega$ then there exist arbitrarily large positive $R$ for which there are no solutions. Our result is thus essentially sharp, with the exception of $c = \omega$. This work is analogous to the results of Daemen \cite{DaemenAA1} and Wright \cite{Wright1} in which similar statements are proved for Waring's problem, though our results are likely somewhat stronger than what is possible in that setting. We discuss other related results and further work to be done. 

\end{abstract}

\maketitle 

\everymath{\displaystyle}

\section{Introduction} 
Our objective in this article is to study surprising properties of a certain subclass of integer solutions to a real-exponent variant of Waring's problem. Fix $\tau > 0$ and $\theta > 2$ with $\theta \notin \mathbb{N}$, and let $s \in \mathbb{N}$.  
For positive $R \in \mathbb{R}$, define $X = (R/s)^{1/\theta}$ and let $Y \in \mathbb{R}$ with $0 \leq Y \leq X$. Let $N_{s,\theta}^{\tau}(R,Y)$ be the number of positive integer solutions $(x_1, \ldots, x_s)$ to the relation 
\begin{align} \label{eq:main}
|x_1^{\theta} + \cdots +x_s^{\theta} - R| < \tau, \quad X-Y < x_i \leq X+Y \quad \text{ for each } \ i = 1, \ldots, s.
\end{align}
Essentially, we want to count the number of ways to approximate $R$ as a sum of $s$ real numbers that are each $\theta$th powers of positive integers in a restricted range as $R \to \infty$. Setting $x_i = X$ for all $i$ would constitute the diagonal solution to \eqref{eq:main}, but this is not possible in general since $X$ may not be an integer. However, the restriction $X-Y < x_i \leq X+Y$ forces the variables $x_i$ to be close to the diagonal $X$ and to one another in $\ell^{\infty}$-norm, for which reason we call these solutions ``almost-diagonal" solutions or ``localized" solutions. We will specifically be interested in the case when $Y = c\sqrt{X}$ for some constant $c = c(s,\theta,\tau)$. It turns out that the behavior of $N_{s,\theta}^{\tau}(R, Y)$ is heavily dependent on the value of $c$. If $s$ and $c$ are large enough, then an asymptotic lower bound holds for $N_{s,\theta}^{\tau}(R, Y)$, which we record as follows.

\begin{theorem} \label{thm:pos}
Let $\theta > 2$ be a non-integer and fix $\tau > 0$. For a positive real number $R$, let $X = (R/s)^{1/\theta}$ and $Y=c\sqrt{X}$, where $c >(\lfloor s/2 \rfloor (\theta-1))^{-1/2}$ is a fixed constant. Then for any positive integer $s \geq (\lfloor2\theta\rfloor+1)(\lfloor 2\theta \rfloor+2)+1$, the bound 
\[ N_{s, \theta}^{\tau}(R,Y) \gg_{s,\theta} \tau X^{1-\theta}Y^{s-1} \]
holds for all sufficiently large real $R > 0 $. 
\end{theorem}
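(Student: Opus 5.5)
The strategy is a Davenport--Heilbronn type Fourier argument, carried out in local coordinates around a carefully chosen non-diagonal configuration. The constant $\omega=(\lfloor s/2\rfloor(\theta-1))^{-1/2}$ appears at the step that shows the target lies in the interior of the image of a small box.

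\textbf{Local coordinates and the source of $\omega$.} Write $x_i = X + h_i$ with $|h_i|\le Y = c\sqrt X$. With $k=\lfloor 2\theta\rfloor$, Taylor expansion gives
\[ x_i^{\theta} = X^{\theta} + \theta X^{\theta-1}h_i + \tbinom{\theta}{2}X^{\theta-2}h_i^2 + \cdots + \tbinom{\theta}{k}X^{\theta-k}h_i^{k} + O\big(X^{\theta-(k+1)/2}\big). \]
Since $(k+1)/2>\theta$, the error is $o(1)$, hence $o(\tau)$.
\begin{itemize}
\item The condition $|\sum x_i^\theta - R|<\tau$ becomes $\big|\theta X^{\theta-1}\sum h_i + \tbinom{\theta}{2}X^{\theta-2}\sum h_i^2 + \cdots\big|<\tau$, using $R = sX^\theta$.
\item The $h_i$ lie in shifted lattices $\mathbb Z - X$, so $\sum h_i$ ranges over a coset $\mathbb Z - sX$. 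The linear term therefore only moves in steps of $\theta X^{\theta-1}$.
\item To hit an arbitrary target we need the quadratic term, of size $\tbinom{\theta}{2}X^{\theta-2}\sum h_i^2$, to sweep an interval of length at least one step. That is, we need $\frac{\theta-1}{2X}\sum h_i^2 > 1$ while $\sum h_i$ stays essentially fixed.
\item Put $\lfloor s/2\rfloor$ pairs at $h\approx +t$ and $h\approx -t$, plus one near $0$ if $s$ is odd. Then $\sum h_i\approx 0$ and $\frac{\theta-1}{2X}\sum h_i^2 \approx \lfloor s/2\rfloor(\theta-1)t^2/X$.
\item This exceeds $1$ exactly when $t>\omega\sqrt X$. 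Since $c>\omega$, we may take $t = c'\sqrt X$ with $\omega<c'<c$ and still remain inside the box.
\end{itemize}

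\textbf{Choosing the box.} By an intermediate value argument, vary $t$ in $[0,c'\sqrt X]$ together with an integer shift of one coordinate. This produces a real point $\mathbf h^{*}$ with $|h_i^{*}|\le c''\sqrt X$ for some $c''<c$ at which the smooth function $F(\mathbf h)=\sum (X+h_i)^\theta - R$ vanishes. At $\mathbf h^{*}$ the gradient has entries $\asymp X^{\theta-1}$, and no degeneracy occurs. Now fix a small $\eta>0$ and let $\mathcal B$ be the box of side $\eta\sqrt X$ centred at $\mathbf h^{*}$, which lies inside $[-Y,Y]^s$. Write $f(\alpha)=\sum_{h\in I}e(\alpha(X+h)^\theta)$ for the localized sum over the relevant interval $I$. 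With a Davenport--Heilbronn kernel $K$ whose Fourier transform is supported in $(-\tau,\tau)$, we have
\[ N \ge \int_{\mathbb R} \prod_i f_i(\alpha)\, e(-\alpha R)\,K(\alpha)\,d\alpha. \]
We split $\mathbb R$ into three ranges.
\begin{itemize}
\item \emph{Major arc $|\alpha|\le X^{1-\theta}\cdot X^{-\delta}$.} Replace each $f_i$ by its integral $\int e(\alpha(X+h)^\theta)\,dh$. The resulting singular integral counts real points of $\mathcal B$ in a $\tau$-neighbourhood of $F=0$. Because $F(\mathbf h^*)=0$ lies in the interior of the image of $\mathcal B$ and $|\nabla F|\asymp X^{\theta-1}$, this integral is $\gg \tau X^{1-\theta}(\eta\sqrt X)^{s-1}\asymp \tau X^{1-\theta}Y^{s-1}$.
\item \emph{Tail $|\alpha|$ large.} This is handled trivially by the decay of $K$.
\item \emph{Intermediate (minor) range.} This is treated in the next step.
\end{itemize}

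\textbf{Main obstacle: the intermediate range.} This is where $\sum_i x_i^\theta$ is not a polynomial and equidistribution has to be proved by hand. The plan has four steps.
\begin{enumerate}
\item Use the Taylor expansion above to write $f(\alpha)$, up to an acceptable error, as a Weyl sum $\sum_{|h|\le Y} e(\alpha_1 h+\cdots+\alpha_k h^k)$ of degree $k=\lfloor 2\theta\rfloor$. The coefficients are $\alpha_j=\alpha\binom{\theta}{j}X^{\theta-j}$, and the main term $\alpha X^\theta$ is a unimodular factor.
\item Apply Hölder to bound the minor arc integral by $\sup_{\mathfrak m}|f|^{s-k(k+1)}$ times the mean value $\int|f|^{k(k+1)}$.
\item Bound the mean value through the $(k+1)$-dimensional change of variables $\alpha\mapsto(\alpha_1,\dots,\alpha_k)$ and the Vinogradov mean value theorem (Bourgain--Demeter--Guth, Wooley) with $k(k+1)$ variables. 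This gives $\ll Y^{k(k+1)-k(k+1)/2+\epsilon}$ on a $(k+1)$-dimensional box. Pulling back to the single parameter $\alpha$ costs a Jacobian factor, which we control because the curve $\alpha\mapsto(\alpha_j)$ has all coordinates comparable to the $Y$-scaled box.
\item For the sup bound, observe that off the major arc $\alpha_1=\alpha\theta X^{\theta-1}$ is not near an integer at scale $1/Y$, or higher coefficients are not. A van der Corput or Weyl differencing estimate then gives $|f|\ll Y^{1-\sigma}$ for some $\sigma>0$.
\end{enumerate}
The condition $s\ge (k+1)(k+2)+1$ leaves enough spare variables for this saving to beat the losses, giving an $o(\tau X^{1-\theta}Y^{s-1})$ contribution. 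I expect the delicate point to be the transition region near $|\alpha|\approx X^{1-\theta}$, where we need $\alpha_1$ bounded away from nonzero integers. There we would use the non-integrality of $\theta$, through the second-derivative (van der Corput) estimate on $h\mapsto(X+h)^\theta$, to rule out spurious major arcs at $\alpha\theta X^{\theta-1}\in\mathbb Z\setminus\{0\}$.
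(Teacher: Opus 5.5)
There is a genuine gap, and it sits exactly where you anticipated trouble. You claim that everything outside $|\alpha|\le X^{1-\theta-\delta}$ is an error term, and in particular that the points $\alpha\theta X^{\theta-1}\in\mathbb Z\setminus\{0\}$ can be ``ruled out''. They cannot. Take $\alpha=n/(\theta X^{\theta-1})$ with $n$ a fixed nonzero integer. For $|x-X|\le Y$ one has $\alpha x^\theta=\alpha X^\theta+n(x-X)+\frac{n(\theta-1)}{2X}(x-X)^2+O(|n|X^{-1/2})$, and $e(n(x-X))=e(-nX)$ is constant on integers $x$. So by Lemma~\ref{lem:poisson}, $|f(\alpha)|=Y\bigl|\int_{-1}^{1}e\bigl(\tfrac12 n(\theta-1)c^2t^2\bigr)\,dt\bigr|+O_n(1)\gg_{n,c,\theta}Y$. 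The second derivative of the phase there is $\asymp |n|/X\asymp Y^{-2}$, so the van der Corput test gives no saving at all, and the non-integrality of $\theta$ plays no role. Each such peak has width $\asymp X^{1-\theta}Y^{-1}$. It therefore contributes $\gg X^{1-\theta}Y^{s-1}$ to $\int|f|^s$, the same order as your main term, so the H\"older-plus-sup strategy fails. Its signed contribution also carries the factor $e(-nsX)$. These peaks are the Fourier-side shadow of the constraint $\sum h_i\in\mathbb Z-sX$. Your heuristic identified that constraint correctly, but the one-variable setup discards it. The box $\mathcal B$ does not help, since it meets $\asymp\eta\sqrt X$ of the admissible hyperplanes $\sum h_i=\mathrm{const}$.

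A consistency check confirms the peaks cannot be negligible. Your singular-integral lower bound uses only $F(\mathbf h^*)=0$, $|\nabla F|\asymp X^{\theta-1}$ and $\mathcal B\subset[-Y,Y]^s$. All of these hold for $\mathbf h^*=\mathbf 0$ and \emph{every} $c>0$. So if the rest were an error term, you would get solutions for all $c>0$, contradicting Theorem~\ref{thm:neg}. Indeed, for $R=sM^\theta+s\theta M^{\theta-1}$ and $c<\omega$ the $n\neq 0$ peaks must cancel the $n=0$ term.

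The missing idea is to split off the linear form at the outset, as the paper does. Count solutions of \eqref{eq:system}, where $x_1+\cdots+x_s=sX+m$ for one fixed $m\in\mathbb Z-sX$ with $-1-\mu\le m\le-\mu$. Detect this equation with a separate variable $\alpha_1\in[-1/2,1/2)$. Give $\alpha_2$ the phase $x^\theta-X^\theta-\theta(x-X)X^{\theta-1}$, which has no linear part. Then the major arcs $|\alpha_2|\ll Y^{-1}X^{2-\theta}$ are taken uniformly in $\alpha_1$ (Lemma~\ref{lem:approx} treats them all at once), no spurious peaks remain, and van der Corput genuinely saves on the complement. The hypothesis $c>\omega$ enters only in the singular integral. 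There one needs a nonsingular real solution of the codimension-two system $u=v=0$ inside $(-1,1)^s$. It is obtained by interpolating between the diagonal point $\boldsymbol\eta_-$, where $v\approx\theta m/c^2<0$, and the alternating point $\boldsymbol\eta_+$ with entries $\pm\delta\sqrt{1+\mu}$. Since the coset may force $|m|$ up to $1+\mu$, making $v(\boldsymbol\eta_+)>0$ requires exactly $c>\omega/\delta$. If you stay with one variable, you must instead enlarge the major arcs to $|\alpha|\ll Y^{-1}X^{2-\theta}$, keep every peak, and Poisson-sum them into $\sum_{m\in\mathbb Z-sX}J(R,m)$; that is the paper's argument in disguise.

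Two smaller points. First, the tail $|\alpha|>X^{\nu}$ is not trivial with a kernel decaying like $|\alpha|^{-2}$, since $Y^sX^{-\nu}\gg X^{1-\theta}Y^{s-1}$. The paper combines the decay of $K_\pm$ with the mean value bound (Lemma~\ref{lem:trivial}). Second, the mean value estimate is not obtained by pulling Vinogradov's theorem back along a curve. The paper uses Lemma~\ref{lem:count} together with the Arkhipov--Zhitkov count of admissible $(h_2,\dots,h_k)$ in Lemma~\ref{lem:arkzhit}.
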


An analogous result was proven in the context of Waring's problem by Daemen \cite{DaemenAA1}, where $\theta$ and $R$ are replaced with positive integers no smaller than $2$ and approximation is replaced with equality. Our proof of Theorem \ref{thm:pos} will make use of the Davenport-Heilbronn variant of the circle method. 

If $c$ is too small in terms of $s$ and $\theta$, then the above bound may not hold, as our next result shows. A similar result was proven in the case of Waring's problem by Wright \cite{Wright1}. For a sequence of real numbers $0 \leq R_1 < R_2< \ldots$, define $X_n = (R_n/s)^{1/\theta}$. 

\begin{theorem} \label{thm:neg} 
Let $\theta > 2$ be a non-integer and fix $\tau > 0$. Fix a constant $c < (\lfloor s/2 \rfloor(\theta-1))^{-1/2}$ and a positive integer $s \geq 2$. There exists an infinite sequence of real numbers $\{R_n\}_{n=1}^{\infty}$ with $R_n \to \infty$ as $n \to \infty$ for which  $N_{s,\theta}^{\tau}(R_n, c\sqrt{X_n}) = 0$. 
\end{theorem}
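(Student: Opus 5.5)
The plan is to choose $R_n$ so that the diagonal point $X_n$ sits at an unfavourable position relative to the integers, and then show by Taylor expansion that no localized $s$-tuple can get within $\tau$ of $R_n$. Write $x_i = X + h_i$ with $|h_i| \le Y = c\sqrt{X}$, and set $R = sX^\theta$ exactly. Expanding to second order gives
\begin{align*}
\sum_{i=1}^s x_i^\theta - R = \theta X^{\theta-1}\sum_i h_i + \tfrac{\theta(\theta-1)}{2}X^{\theta-2}\sum_i h_i^2 + O\Bigl(X^{\theta-3}\sum_i |h_i|^3\Bigr).
\end{align*}
The error term is $O(X^{\theta-3/2})$. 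Dividing by $\theta X^{\theta-1}$, the condition $|\sum x_i^\theta - R|<\tau$ becomes
\begin{align*}
\sum_i h_i = -\frac{\theta-1}{2X}\sum_i h_i^2 + O(X^{-1/2}).
\end{align*}
Since $\sum_i h_i^2 \le s c^2 X$, the right-hand side lies in a window $[-(\theta-1)sc^2/2 - o(1),\, o(1)]$. The point is that $\sum_i h_i$ is confined to a fixed residue class $-s\delta \bmod 1$, where $\delta$ is the fractional part of $X$. We therefore choose $X_n = n+\delta$ for integers $n\to\infty$, with $\delta$ fixed to make that residue class miss the window. Finally we put $R_n = sX_n^\theta$.

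For $s$ even I take $\delta = 1/2$. Then every $h_i$ is a half-integer and $\sum h_i\in\mathbb{Z}$. The window is $(-\lfloor s/2\rfloor(\theta-1)c^2 - o(1),\, o(1))$, and since $\lfloor s/2\rfloor(\theta-1)c^2<1$ by hypothesis, it contains only the integer $0$. But $\sum h_i = 0$ is also impossible: with $\sum h_i = 0$, the exact value is $\sum x_i^\theta - R = \frac{\theta(\theta-1)}{2}X^{\theta-2}\sum h_i^2 + O(X^{\theta-3/2})$ (here I must redo the expansion without dividing, keeping the cubic error). Because $\sum h_i^2 \ge s/4$, this is $\gg X^{\theta-2}$, provided the cubic term is handled. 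The cubic term is a problem when the $h_i$ are as large as $\sqrt X$. However, in that case $\sum h_i^2\gg X$, so the quadratic term is $\gg X^{\theta-1}$, which dominates $X^{\theta-3/2}$. The case split on the size of $\sum h_i^2$ therefore settles it, and either way the quantity exceeds $\tau$ for large $n$.

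For $s$ odd, the bound $\sum h_i^2\le sc^2X$ is too crude, because the threshold involves $\lfloor s/2\rfloor$ rather than $s/2$. Here I would exploit more structure. With $\delta$ chosen so that $\sum h_i \equiv -s\delta \pmod 1$, the target is to force $\sum h_i \le -1 + \eta$ for small $\eta$ (or to force it to equal a value too small to balance). The task is then to maximize $\sum h_i^2$ subject to $\sum h_i = -t$, $|h_i|\le c\sqrt X$, and all $h_i$ lying in a fixed coset of $\mathbb{Z}$. A maximizer pushes the variables to $\pm c\sqrt X$. With $s$ odd, the signs can balance only $\lfloor s/2\rfloor$ against $\lfloor s/2\rfloor$, so the leftover variable must be $O(1)$ to keep the sum bounded. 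This gives $\sum h_i^2 \le 2\lfloor s/2\rfloor c^2X + O(\sqrt X)$, which is exactly where $\lfloor s/2\rfloor$ enters. I would choose $\delta$ close to $1/(2s)$ (or similar) so that $\sum h_i\equiv 1/2 \pmod 1$, making the admissible values of $\sum h_i$ near $0$ be $-1/2$ and $1/2$. I then need to check that the refined window $[-\lfloor s/2\rfloor(\theta-1)c^2 - o(1), o(1)]$ still excludes them; this is the step I expect to be the main obstacle. If it fails I would instead take $\delta = 1/2$ and argue via the refined maximization directly.

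The main obstacle is this odd-$s$ optimization with the congruence constraint, done carefully enough to obtain the sharp constant $\lfloor s/2\rfloor$. The uniform control of the Taylor remainder is also delicate, because the $h_i$ range up to $\sqrt X$. Everything else is routine bookkeeping, and letting $n\to\infty$ gives the required sequence $R_n\to\infty$ with $N^\tau_{s,\theta}(R_n,c\sqrt{X_n})=0$.
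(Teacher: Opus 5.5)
Your even-$s$ argument is sound. It is essentially the paper's argument in different coordinates: the paper expands about the integer $M$ with $R=sM^\theta+s\theta M^{\theta-1}$, so $X$ sits just below $M+1$, and $s-\sum A_i$ plays the role of your integer $-\sum h_i$. Your refined bound $\sum h_i^2\le 2\lfloor s/2\rfloor c^2X+O(\sqrt X)$ for odd $s$ is also correct, and it is exactly the extra input the paper uses (its sign split into $U$ and $V$).

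\textbf{The gap: the residue class in the odd case.} With $\delta=1/(2s)$, or with your fallback $\delta=1/2$, odd $s$ gives $\sum h_i\in\mathbb{Z}+\tfrac12$. The point $-\tfrac12$ lies inside the window $[-\lfloor s/2\rfloor(\theta-1)c^2-o(1),\,o(1)]$ as soon as $\lfloor s/2\rfloor(\theta-1)c^2>\tfrac12$. So your residue argument only reaches $c<(2\lfloor s/2\rfloor(\theta-1))^{-1/2}$, which is a factor $\sqrt2$ short of the statement. Finer estimates cannot rescue this choice. Take $s$ as large as in Theorem~\ref{thm:pos} and $c^2>(2\lfloor s/2\rfloor(\theta-1))^{-1}$. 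Then the paper's proof of Theorem~\ref{thm:pos}, run with $m=-\tfrac12$ (admissible since $\{sX_n\}=\tfrac12$), finds $\boldsymbol{\eta}_\pm$ with $v(\boldsymbol{\eta}_-)<0<v(\boldsymbol{\eta}_+)$ and produces solutions for your $R_n$. Your sequence is therefore genuinely not exceptional in that range.

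\textbf{The fix.} Write $w=\lfloor s/2\rfloor(\theta-1)c^2<1$; the window is $[-w,0]$ up to $o(1)$. The residue class of $\sum h_i$ must therefore sit at $0$, or in $(0,1-w)$, not at $\tfrac12$.

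Take $\delta=1/s$, or any $\delta\notin\mathbb{Z}$ with $s\delta\in\mathbb{Z}$.
\begin{enumerate}
\item Then $\sum h_i\in\mathbb{Z}$.
\item The crude window gives $\sum h_i=O(1)$, which is the input the refined bound needs.
\item The refined bound has a clean proof. The sign class of smaller size has at most $\lfloor s/2\rfloor$ members, so $\sum|h_i|\le 2\lfloor s/2\rfloor Y+|\sum h_i|$, and then $\sum h_i^2\le Y\sum|h_i|$.
\item This places the window inside $(-1,1)$, forcing $\sum h_i=0$.
\item Every $|h_i|\ge 1/s$, and $\sum|h_i|^3\le c\sqrt X\sum h_i^2$ controls the cubic term. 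Hence $\sum x_i^\theta-R\ge(1-o(1))\binom{\theta}{2}X^{\theta-2}/s>\tau$, exactly as in your even case.
\end{enumerate}
Alternatively, a fixed class $\beta\in(0,1-w)$ misses the window outright. The paper's choice of $R$ realises the class-$0^+$ version: $\chi=\pm1$ is excluded by the sign-split bound, and $\chi=0$ by the quadratic term.
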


It is useful to provide motivation for why one would consider this problem. One of the most natural ways one might try to construct solutions to \eqref{eq:main} would be to try the diagonal solution $x_1 = \cdots = x_s = X$, but this obviously fails if $X$ is not an integer. The next natural progression would be to consider integer values close to $X$, say $X-Y < x_i \leq X+Y$ for some $Y$. We want to see how small we can take $Y$ and still prove the existence of solutions to \eqref{eq:main}. 
Theorems \ref{thm:pos} and \ref{thm:neg} show that the optimal size of $Y$ is $Y = c \sqrt{X}$, and $N_{s,\theta}^{\tau}(R,Y)$ displays a sudden change in behavior at this scale depending on $c$.

We pause to discuss some interesting features of these two results, which lead to further questions. The first question inherent to most counting problems of this type concerns the size of $s$ in terms of $\theta$. In Theorem \ref{thm:pos} we require $s \geq (4+o(1))\theta^2$, which matches the improvements obtained by Poulias \cite{Poulias1} for Diophantine inequalities with fractional exponents. Both the result of this paper and of \cite{Poulias1} make use of the arguments of Arkhipov and Zhitkov \cite{ArkhZhit} and the powerful estimates now available as consequences of the now resolved Main Conjecture in Vinogradov's Mean Value Theorem; see Bourgain, Demeter and Guth \cite{BDG} and Wooley \cite{WooNEC}. The bounds from \cite{DaemenAA1} are essentially $s \geq (10/3+o(1))k^2\log(k)$, where $k$ plays the analogous role to $\theta$. Using the results of \cite{BDG} and \cite{WooNEC} one could likely improve this to $s \geq k^2+k+1$. Upon inspection of the exponent in the lower bound to $N_{2,\theta}^{\tau}(R,Y)$ in Theorem \ref{thm:pos}, one notices that the exponent is positive as soon as $s > 2\theta-1$, and so it would be natural to conjecture that Theorem \ref{thm:pos} holds as soon as $s > 2 \theta-1$. However, the analogous barrier in the standard Waring problem and its diagonal and real-exponent variants (barring any local restrictions) is $s > k$ (or $s > \theta$). Thus, one is left to wonder what the behavior of almost-diagonal solutions is when $\theta < s \leq 2\theta -1$. If almost-diagonal solutions do exist, their asymptotics may be different, or the value of $Y$ may have to be different, say $cX^{\kappa}$ for some $\kappa > 1/2$ depending on $s$ or $\theta$. One may formulate a natural conjecture for what $\kappa$ should be when $\theta < s \leq 2\theta-1$, but for obvious reasons we are very far from proving any results in this direction anyway. Note also that we only require $s \geq 2$ in Theorem \ref{thm:neg}. 

One may also consider the asymptotics for $N_{s,\theta}^{\tau}(R,Y)$. Wright \cite{WrightAsymp} proved an asymptotic formula for the analogous quantity to $N_{s,\theta}^{\tau}(R,Y)$ in the context of Waring's problem, but with the more restrictive condition that $Y \geq X^{1-1/k+\epsilon}$ for some $\epsilon > 0$, where $k$ is the degree. This was improved by Daemen \cite{DaemenBLMS} to the condition $Y \geq \log(X)^{r_k}\sqrt{X}$ where $r_k = (10/3 +o(1))k^2\log(k)$. 
However, an asymptotic formula for when $Y = c\sqrt{X}$ still seems to be out of reach. 
The same question remains open in the non-integer degree setting. 

It is not inconceivable that this is tractable even when the integer degree case is not; our conditions on $c$ for existence are sharper than those in \cite{DaemenAA1} due to better control from the lack of singular series (Daemen \cite{DaemenAA1} requires $c \geq 4$, though this can likely be reduced with some care). 
It would also be interesting to consider asymptotics for $N_{s,\theta}^{\tau}(R,Y)$ with $Y$ having other orders of growth in terms of $X$ than $Y = c\sqrt{X}$. 

One of the most striking features of this result is that a mere constant factor can drastically change the behavior of $N_{s,\theta}^{\tau}(R,Y)$. Moreover, excepting the critical point $\omega = (\lfloor s/2 \rfloor(\theta-1))^{-1/2}$ we establish essentially sharp conditions on $Y$ for the existence of solutions. It then becomes a particularly interesting problem to determine the behavior of $N_{s,\theta}^{\tau}(R,\omega \sqrt{X})$, as this critical situation is not covered here. It might be the case that there are constants $\delta < 1/2$ and $\omega_0$ such that $N_{s,\theta}^{\tau}(R,\omega\sqrt{X}+cX^{\delta})$ is positive for all large $R$ if $c > \omega_0$ but not if $c < \omega_0$.

There are also questions in regards to Theorem \ref{thm:neg}. We show the existence of a particular sequence which cannot be approximated by almost diagonal solutions, but one can ask about the density or frequency of such real numbers, and an exceptional set result along these lines could be interesting. The strongest result of this type would determine the density or frequency of the set of real numbers $R$ for which $N_{s,\theta}^{\tau}(R,Y) = 0$, and how this depends on the magnitude of $Y$, whether $Y = c\sqrt{X}$ for a constant $c$ or whether $Y \asymp X^{\delta}$ for some $\delta < 1/2$. One may also determine how these density and exceptional set results depend on the other parameters, such as $\theta$ and $s$ or even $\tau$. 

We finally mention some results related to Theorems \ref{thm:pos} and \ref{thm:neg} which exist for other Diophantine problems. The work of Daemen \cite{DaemenAA1} and Wright \cite{Wright1} on Waring's problem remain the main sources of inspiration for this work. Similar results also exist for the Waring-Goldbach problem with exponent $k$. Wei and Wooley \cite{WW} show the existence of almost-diagonal solutions as long as $Y \geq X^{5/6+\epsilon}$ and $s > 2k(k-1)$ if $k \geq 4$, and prove several other related results. Salmensuu \cite{Sal} improved this by showing that for large enough $s$ in terms of $k \geq 2$ one can take $Y \geq X^{0.525+\epsilon}$, and for $s \geq k^2+k+1$ with $k \geq 4$ one can take $Y \geq X^{0.55+\epsilon}$. See \cite{Sal} and works cited therein for other related results. Biggs \cite{Biggs} proves a result similar to Theorem \ref{thm:neg} in the case of Waring's problem with shifts. The result of \cite{Biggs} goes even further to show that approximations may not even exist if $\tau$ is allowed to grow with $R$, up to the level $\tau < \mu R^{1-2/k}$ for some constant $\mu$, where $k$ is the degree. Following this, it may also be interesting to pursue similar results in this context, both in the affirmative and in the negative, with $\tau$ being allowed to grow with $R$; one may even be able to prove existence of solutions with a smaller number of variables or with a smaller value of $Y$ if $\tau$ grows quickly enough with $R$. 

As a quick word on notation, we make free use of Vinogradov's notation $P \ll Q$, and use $P \asymp Q$ to mean $P \ll Q$ and $P \gg Q$. For the rest of this paper, we assume $R$ is a large positive real number, that $X = (R/s)^{1/\theta}$, and that $Y = c\sqrt{X}$ for some constant $c$ whose value will be specified as needed. We also fix $\tau > 0$ and $\theta > 2$ with $\theta \notin \mathbb{N}$. We write $\mathbb{N} = \{1,2,\ldots,\}$ to be the set of positive integers. As usual, we adopt the notation $e(x) = e^{2\pi i x}$ for $x \in \mathbb{R}$. 
The rest of this paper is organized as follows. In Section \ref{sec:neg} we prove Theorem \ref{thm:neg}. In Section \ref{sec:prelim} we outline the method of proof for Theorem \ref{thm:pos} and make the initial set-up for our application of the Davenport-Heilbronn method. In Section \ref{sec:min} we prove bounds for the minor arcs, and in Section \ref{sec:triv} we dispose of the trivial arcs. Finally, we conclude in Section \ref{sec:maj} with the major arc analysis.

\subsection{Acknowledgments} The author would like to thank his advisor, Prof. T.D. Wooley, for funding support from NSF grant DMS-2502625 and for many helpful discussions and suggestions. The author also thanks the anonymous referee for helpful comments improving the exposition of this paper. 

\section{Proof of Theorem \ref{thm:neg}} \label{sec:neg}

We begin by proving Theorem \ref{thm:neg}, that if $Y = c\sqrt{X}$ and $c$ is too small then one cannot guarantee the existence of solutions to \eqref{eq:main}. Our method is an adaptation of the proof in \cite{Wright1}. This is a direct consequence of the following. 

\begin{lemma} Fix $\tau > 0$ and $\theta > 2$ with $\theta \notin \mathbb{N}$, and let $s \in \mathbb{N}$ with $s \geq 2$. Let $M \in \mathbb{N}$ be sufficiently large. Fix $c < (\lfloor s/2 \rfloor (\theta-1))^{-1/2} $. Then there are no solutions to \eqref{eq:main} when $R = sM^{\theta}+s\theta M^{\theta-1}$. 
\end{lemma}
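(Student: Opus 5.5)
The plan is to compare every admissible tuple with the diagonal point $X$ and to show that the quadratic term of a Taylor expansion cannot compensate for the integrality defect of $\sum x_i$. The choice $R=sM^{\theta}+s\theta M^{\theta-1}$ places $X$ just below the integer $M+1$. By the binomial expansion,
\[ (M+1)^{\theta}=M^{\theta}+\theta M^{\theta-1}+\tfrac{\theta(\theta-1)}{2}M^{\theta-2}+O(M^{\theta-3}), \]
so $R/s=(M+1)^{\theta}-\tfrac{\theta(\theta-1)}{2}M^{\theta-2}+O(M^{\theta-3})$. Taking $\theta$th roots gives
\[ X=M+1-\delta,\qquad \delta=\tfrac{\theta-1}{2M}+O(M^{-2}). \]
In particular $0<s\delta<1$ for large $M$, and $sX$ lies strictly between $s(M+1)-1$ and $s(M+1)$.

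Given a putative solution $(x_1,\ldots,x_s)$, I write $N=\sum x_i\in\mathbb{Z}$ and split into two cases.

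\emph{Case $N\ge s(M+1)$.} Convexity of $t\mapsto t^{\theta}$ (power mean inequality) gives
\[ \sum x_i^{\theta}\ge s(N/s)^{\theta}\ge s(M+1)^{\theta}=R+\tfrac{s\theta(\theta-1)}{2}M^{\theta-2}+O(M^{\theta-3}). \]
Since $\theta>2$, this exceeds $R+\tau$ once $M$ is large, so there is no solution.

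\emph{Case $N\le s(M+1)-1$.} Write $x_i=X+u_i$ with $|u_i|\le Y=c\sqrt X$, and set $U=\sum u_i=N-sX$. Then $U\le -1+s\delta=-1+o(1)$. Taylor expansion to second order with a cubic remainder gives, using $sX^{\theta}=R$,
\[ \sum x_i^{\theta}-R=\theta X^{\theta-1}U+\tfrac{\theta(\theta-1)}{2}X^{\theta-2}\sum u_i^2+O(X^{\theta-3}Y^3). \]
The error term is $O(X^{\theta-3/2})$. The quadratic term is at most $O(X^{\theta-1})$ because $\sum u_i^2\le sc^2X$. So for the left side to be less than $\tau$ in absolute value, we first need $U=O(1)$, and then
\[ \sum u_i^2\ge \frac{2X|U|}{\theta-1}-O(X^{1/2})\ge \frac{2X}{\theta-1}\bigl(1-o(1)\bigr). \]

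The crux, and where $\lfloor s/2\rfloor$ enters, is an upper bound for $\sum u_i^2$ under the constraints $|u_i|\le Y$ and $\sum u_i=U$ with $U=O(1)$. The trivial bound $sY^2$ loses when $s$ is odd. Instead, I will maximise the convex function $\sum u_i^2$ over the polytope cut out of the box $[-Y,Y]^s$ by the hyperplane $\sum u_i=U$. The maximum is attained at a vertex, and a vertex has at most one coordinate strictly inside $(-Y,Y)$.

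Suppose $p$ coordinates equal $Y$ and $q$ equal $-Y$. If all coordinates are $\pm Y$, then $(p-q)Y=U=O(1)$ forces $p=q$, so $s$ is even and $\sum u_i^2=sY^2$. Otherwise $p+q=s-1$ and the free coordinate is $u_j=U-(p-q)Y$. The condition $|u_j|\le Y$ with $U=O(1)$ forces $p=q$ (the case $|p-q|=1$ would give $|u_j|\ge Y-O(1)$, which at worst contributes $O(Y)$ extra). Hence, in all cases,
\[ \sum u_i^2\le 2\lfloor s/2\rfloor Y^2+O(Y)=2\lfloor s/2\rfloor c^2X+O(X^{1/2}). \]

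Combining this with the lower bound from the second case gives $\tfrac{1}{\theta-1}\le \lfloor s/2\rfloor c^2+o(1)$. This contradicts $c<(\lfloor s/2\rfloor(\theta-1))^{-1/2}$ for $M$ sufficiently large, so neither case admits a solution.

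I expect the main obstacles to be bookkeeping: making sure the vertex argument handles the $|p-q|=1$ boundary cases cleanly, and keeping the $o(1)$ terms (from $s\delta$, the cubic remainder, and $O(Y)$ versus $Y^2$) uniformly smaller than the fixed gap $(\theta-1)^{-1}-\lfloor s/2\rfloor c^2$.
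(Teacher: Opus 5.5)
Your argument is correct, but it differs from the paper's in both its case decomposition and its key estimate.

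The paper expands around the integer $M$ rather than $X$. It writes $A_i=x_i-M$ and works with the integer $\chi=s-\sum_i A_i$, which is $s(M+1)-\sum_i x_i$ in your notation. It needs only the crude bound $M\le X\le M+1$, and then splits on the parity of $s$:
\begin{itemize}
\item For even $s$, the trivial bound $\sum_i A_i^2\le s\,c^2(1+\delta)^2M$ already forces $\chi=0$. Then $\sum_i A_i^2\ge1$ makes $\binom{\theta}{2}M^{\theta-2}\sum_i A_i^2$ too large.
\item For odd $s$, it gets $\chi\in\{-1,0,1\}$. It excludes $\chi=\pm1$ using the triangle inequality, which discards the sign of the linear term, together with a sign-splitting $\ell^1$--$\ell^\infty$ estimate $\sum_i A_i^2\le \max_i|A_i|\sum_i|A_i|\le c^2(s-1)(1+\delta)^3M$.
\end{itemize}

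You instead centre at $X$, so the constant term cancels exactly. The sharper asymptotic $X=M+1-\frac{\theta-1}{2M}+O(M^{-2})$ then shows that $sX$ sits just below the integer $s(M+1)$, which makes the mechanism transparent:
\begin{itemize}
\item Jensen disposes of every $\chi\le0$ at once, with no Taylor expansion and no use of the box.
\item For $\chi\ge1$ you keep the sign of the linear term. You replace the sign-splitting trick by an exact maximisation of $\sum_i u_i^2$ over the slice of the box. This handles both parities in one argument and identifies $2\lfloor s/2\rfloor Y^2$ as the true extremal value.
\end{itemize}
In exchange, the paper's version is more hands-on and needs no optimisation at all when $s$ is even.

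On the bookkeeping you flagged, the clean resolution is parity. At a vertex with $p+q=s-1$ one has $p-q\equiv s-1 \pmod 2$. Also $|p-q|\ge2$ is excluded, because it would give $|u_j|\ge 2Y-|U|>Y$.
\begin{itemize}
\item For odd $s$ this gives $p=q$, $u_j=U$ and $\sum_i u_i^2=(s-1)Y^2+U^2$. The all-$\pm Y$ vertices are impossible, since $p\ne q$ would force $|U|\ge Y$.
\item For even $s$ no vertex analysis is needed, since the trivial bound $sY^2$ already equals $2\lfloor s/2\rfloor Y^2$.
\end{itemize}
Your parenthetical claim that $|p-q|=1$ ``contributes $O(Y)$ extra'' is not the right justification. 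Such a vertex would contribute about $Y^2$; the point is that it cannot occur when $s$ is odd. Likewise, for even $s$ it is not true that $p=q$ is forced, but there it does not matter.
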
 

\begin{proof} Let $R = sM^{\theta}+s\theta M^{\theta-1}$, where $M$ is a large integer. Recall that $X = (R/s)^{1/\theta}$. Assume for sake of contradiction that there are $x_1,\ldots, x_s$ such that \eqref{eq:main} is true, with $Y = c\sqrt{X}$. Let $A_i = x_i-M$ for each $i$. Note that since $X^{\theta}= R/s = M^{\theta}+\theta M^{\theta-1}$, we have $X \geq M$. We may therefore deduce from the Mean Value Theorem that 
\[ \theta M^{\theta-1}|X-M| \leq |X^{\theta}-M^{\theta}| = \theta M^{\theta-1}. \]
Therefore we have $M \leq X \leq M+1$, and also 
\begin{align*} 
|A_i| & = |x_i-M| \leq |x_i-X|+|X-M| \leq  cX^{1/2} +1 \leq c(M+1)^{1/2}+1.
\end{align*}
In particular, for any fixed $\delta > 0$ to be chosen later we have $|A_i| \leq c(1+\delta)M^{1/2}$ for all large $M$. 
One more observation about $A_i$ is that we cannot have $x_i = M$ for all $i$ and for large $M$. This would mean $x_1^{\theta}+\cdots +x_s^{\theta} = sM^{\theta}$. However, $sM^{\theta}$ is not within $\tau$ of $R$, and so this contradicts the assumption that $(x_1,\ldots,x_s)$ is a solution to \eqref{eq:main}. Therefore, we must have 
\begin{align} \label{eq:A_i}
\sum_{i=1}^s A_i^2 = \sum_{i=1}^s |x_i-M|^2 \geq 1. 
\end{align}
Using the Taylor series for the function $h(x) = (M+x)^{\theta}$ centered at the origin with number of terms $k$ and remainder $r_k$ we then have 
\begin{equation} \label{eq:comp}
\begin{aligned}
\tau & > \Big|\sum_{i=1}^s (M+A_i)^{\theta}-sM^{\theta}-s\theta M^{\theta-1}\Big| = \Big|\sum_{i=1}^s \sum_{j=0}^k \binom{\theta}{j}A_i^{j}M^{\theta-j}+\sum_{i=1}^s r_k(A_i)-sM^{\theta}-s\theta M^{\theta-1}\Big| 
\\ & = \Big|\theta M^{\theta-1}\big(-s+\sum_{i=1}^s A_i\big)+\sum_{i=1}^s \sum_{j=2}^k \binom{\theta}{j}A_i^jM^{\theta-j}+\sum_{i=1}^s r_k(A_i)\Big| 
\\ & \geq \Big|\theta M^{\theta-1}\big(-s+\sum_{i=1}^s A_i\big)\Big|-\Big|\sum_{i=1}^s \sum_{j=2}^k \binom{\theta}{j}A_i^jM^{\theta-j}+\sum_{i=1}^s r_k(A_i)\Big|. 
\end{aligned}
\end{equation}
Now, the remainder is given as 
\[r_k(A_i) = \binom{\theta}{k+1}(M+z)^{\theta-k-1}A_i^{k+1}. \] Since $|A_i| \leq c(1+\delta)M^{1/2}$, one has $M+A_i = M +O(M^{1/2})$, and so 
\[ |r_k(A_i)| \ll_{\theta,k,c,\delta} M^{\theta-k-1}M^{(k+1)/2} \ll M^{\theta-(k+1)/2}. \] 
When $k \geq 2$ it follows that 
\[ \Big|-s+\sum_{i=1}^s A_i\Big| \leq \theta^{-1}\Big|\sum_{i=1}^s \sum_{j=2}^k \binom{\theta}{j}A_i^jM^{1-j}\Big|+O_{\tau, \theta, k, c, \delta}(M^{-1/2}) \]
\[ \leq \theta^{-1}\sum_{i=1}^s \sum_{j=2}^k \Big|\binom{\theta}{j}\Big|(c(1+\delta)M^{1/2})^jM^{1-j}+o(1) \leq s\theta^{-1} \sum_{j=2}^k \Big| \binom{\theta}{j} \Big| (c(1+\delta))^j M^{1-j/2}+o(1). \]
Noting that $M^{1-j/2} = O(M^{-1/2})$ for all $j > 2$, for any $\delta > 0$ we have 
\begin{align} \label{eq:sA_i}
\Big|s-\sum_{i=1}^s A_i\Big| \leq \frac{1}{2}sc^2(1+\delta)^2(\theta-1)+o_{\tau, \theta, k, c, \delta}(1). 
\end{align}
We now consider two cases, according to whether $s$ is even or odd. Suppose first that $s$ is even. Let $k = 4$. 
Since $c < \sqrt{2}(s(\theta-1))^{-1/2}$ we may find $\delta > 0$ such that $2^{-1}s(c(1+\delta))^2(\theta-1) < 1$. Therefore, for all large $M$ we see that the inequality 
\[ \Big|-s+\sum_{i=1}^s A_i\Big| < 1 \]
holds. Since the left hand side is an integer, we deduce that this integer must be $0$. Substituting this into the second line of \eqref{eq:comp} and using $k = 4$, one has 
\[ \tau > \Big|\sum_{i=1}^s \sum_{j=2}^4 \binom{\theta}{j} A_i^jM^{\theta-j} +\sum_{i=1}^s r_4(A_i)\Big| \geq \Big|\sum_{i=1}^s \binom{\theta}{2}M^{\theta-2}A_i^2\Big| - \Big|\sum_{i=1}^s \sum_{j=3}^4 \binom{\theta}{j}M^{\theta-j}A_i^j+\sum_{i=1}^s r_4(A_i)\Big|. \]
Rearranging and using the bounds $r_4(A_i) \ll M^{\theta-5/2}$ and $A_i \ll M^{1/2}$ we may see that 
\[ \binom{\theta}{2}M^{\theta-2}\sum_{i=1}^s A_i^2 \ll \tau + \sum_{j=3}^4 M^{\theta-j/2-1}\sum_{i=1}^s A_i^2+M^{\theta-5/2} \ll 1 +M^{\theta-5/2}\sum_{i=1}^s A_i^2+M^{\theta-5/2} . \]
Since \eqref{eq:A_i} holds and $\theta > 2$, this is impossible, since the terms on the right are too small to majorize the term on the left.  

Now instead suppose $s$ is odd. Since $c < (\lfloor s/2 \rfloor(\theta-1))^{-1/2}$ we may choose $\delta > 0$ such that $c < (\lfloor s/2 \rfloor(\theta-1)(1+\delta)^3)^{-1/2}$. Since $s$ is odd, $\lfloor s/2 \rfloor = (s-1)/2$ and by \eqref{eq:sA_i} we must have 
\[ \Big|s-\sum_{i=1}^s A_i\Big| \leq \frac{1}{2}sc^2(1+\delta)^2(\theta-1)+o(1) < \frac{s}{(1+\delta)(s-1)}+o(1) < 2+o(1). \]
Define $\chi$ to be the expression within the absolute values on the left hand side. For sufficiently large $M$ it must then be that $\chi \in \{ -1, 0 , 1\}$. If $\chi = 0$, then we may argue exactly as in the previous case to derive the same contradiction. If instead $\chi = \pm 1$, then inserting this into \eqref{eq:comp} with $k = 2$ and using the Triangle inequality shows 
\[ |\theta M^{\theta-1}\chi| < \tau +\Big| \binom{\theta}{2} M^{\theta-2} \sum_{i=1}^s A_i^2\Big|+O(M^{\theta-3/2}), \]
whence 
\begin{align} \label{eq:1bound}
1 < \frac{\theta-1}{2M}\sum_{i=1}^sA_i^2+O(M^{-1/2})
\end{align}
Since $s$ is odd, one of the two sets $\{1 \leq i \leq s: A_i > 0\}$ and $\{1 \leq i \leq s :A_i \leq 0\}$ contains at most $(s-1)/2$ elements. Let $U$ be the smaller set, and let $V$ be the larger. It follows that 
\[ \sum_{i \in U} |A_i| \leq 2^{-1}c(s-1)(1+\delta)M^{1/2} \]
since $|A_i| \leq c(1+\delta)M^{1/2}$ for all $i$. Note that all $A_i$ with $i \in V$ have the same sign. Hence, by definition of $\chi$ one has 
\[ \sum_{i \in V} |A_i| = \big|\sum_{i \in V} A_i\big| = \big|s-\chi-\sum_{i \in U} A_i\big| \leq s+1+2^{-1}c(s-1)(1+\delta)M^{1/2}. \] 
Moreover, we have 
\[ \sum_{i=1}^s |A_i| = \sum_{i \in U} |A_i| +\sum_{i \in V} |A_i| \leq c(s-1)(1+\delta)M^{1/2}+s+1 \leq c(s-1)(1+\delta)^2M^{1/2} \]
for all sufficiently large $M$. Therefore, 
\[ \sum_{i=1}^s |A_i|^2 \leq c(1+\delta)M^{1/2}\sum_{i=1}^s |A_i| \leq c^2(s-1)(1+\delta)^3M. \]
Using this in \eqref{eq:1bound} we deduce 
\[ 1 < \frac{1}{2}c^2(\theta-1)(s-1)(1+\delta)^3+O(M^{-1/2}). \]
However, the upper bound on $c$ implies that 
\[ \frac{1}{2}c^2(\theta-1)(s-1)(1+\delta)^3 < 1, \]
and so the above inequality is impossible if $M$ is too large, which is a contradiction. 
\end{proof} 

Theorem \ref{thm:neg} turns out the be surprising because it does not match a heuristic for the size of $Y$. Let $x_i = X+y_i$, where $-Y < y_i \leq Y$ for $i = 1,\ldots,s$. Then the number of choices for $(y_1,\ldots,y_s)$ is $\asymp Y^s$. Since $X-Y < x_i \leq X+Y$, the Mean Value Theorem shows that $x_i^{\theta} - X^{\theta} \ll X^{\theta-1}Y $. Thus, one might expect solutions to exist as soon as $X^{\theta-1}Y \ll Y^s$, which is to say when
\[ Y \gg X^{\frac{\theta-1}{s-1}}. \]
However, Theorem \ref{thm:neg} shows that this heuristic fails to materialize. 

\section{Proof of Theorem \ref{thm:pos}: Preliminaries and The Davenport-Heilbronn Method } \label{sec:prelim}

We now come to Theorem \ref{thm:pos}, which comprises the remainder of this paper. To this end, we start with a combinatorial computation resembling the argument used in Section \ref{sec:neg}. A similar proof strategy was used in \cite{DaemenAA1}; see Section 2 in \cite{DaemenAA1}. Write $y_i = x_i-X$, so that $|y_i| \ll \sqrt{X}$ for all $i$. Then by Taylor expansion (with $k$ to be chosen in terms of $\theta$ later), one has 
\[ \Big|R-\sum_{i=1}^s x_i^{\theta}\Big| = \Big|R-X^{\theta}\sum_{i=1}^s (1+y_i/X)^{\theta}\Big| = \Big|R-X^{\theta}\sum_{i=1}^s \sum_{j=0}^k \binom{\theta}{j} \Big(\frac{y_i}{X}\Big)^j+X^{\theta}\sum_{i=1}^s r_k(y_i/X)\Big| \]
\[ = \Big|R-sX^{\theta}-\theta X^{\theta-1}\sum_{i=1}^s y_i -  \sum_{i=1}^s \sum_{j=2}^k \binom{\theta}{j}X^{\theta-j}y_i^j-X^{\theta}\sum_{i=1}^s r_k(y_i/X)\Big| \]
\[ \geq \Big|\theta X^{\theta-1}\sum_{i=1}^s y_i\Big| - |sX^{\theta}-R|-\Big|\sum_{i=1}^s\sum_{j=2}^k \binom{\theta}{j}X^{\theta-j}y_i^j\Big|-\Big|\sum_{i=1}^s X^{\theta}r_k(y_i/X)\Big|, \]
where $r_k$ again represents the remainder term. We know that \[r_k(y_i/X) = \binom{\theta}{k+1}(1+\xi)^{\theta-k-1}(y_i/X)^{k+1}\] 
for some $\xi$ between $0$ and $y_i/X$. Since $y_i/X \ll X^{-1/2}$, it holds that $1 + \xi = 1+O(X^{-1/2})$, and so we may surmise that as long as $k \geq 1$, one has 
\[\Big|X^{\theta}\sum_{i=1}^s r_k(y_i/X)\Big| \ll X^{\theta}\sum_{i=1}^k |y_i/X|^{k+1} \ll X^{\theta-k/2-1/2} \ll X^{\theta-1}.\] 
Therefore, if one has $|x_1^{\theta}+\cdots+x_s^{\theta}-R| < \tau$, we may use the identities $R=sX^{\theta}$ and $y_i \ll X^{1/2}$ to conclude that 
\[ \Big|\theta X^{\theta-1}\sum_{i=1}^s y_i\Big| < \tau +\Big|\sum_{i=1}^s \sum_{j=2}^k \binom{\theta}{j} X^{\theta-j}y_i^j\Big|+\Big|X^{\theta}\sum_{i=1}^s r_k(y_i/X)\Big| \ll_{\theta,s,\tau} \sum_{i=1}^s \sum_{j=2}^k X^{\theta-j/2} + X^{\theta-1}, \]
from which we conclude that 
\[ \Big|\theta X^{\theta-1}\sum_{i=1}^s y_i\Big| \ll X^{\theta-1}, \]
and so $|y_1+\cdots+y_s| \ll 1$. Hence, $|x_1+\cdots+x_s-sX| \ll 1$. As such, we will find it useful to count the number of solutions $(x_1,\cdots,x_s)$ with $X-Y < x_i \leq X+Y$ to the system 
\begin{equation} \label{eq:system}
\begin{aligned}
|x_1^{\theta}+\cdots+x_s^{\theta} - R| < \tau, \\ 
x_1+\cdots+x_s = sX+ m,
\end{aligned}
\end{equation}
for some suitably chosen real number $m$. The computation above shows that for every solution $(x_1,\cdots, x_s)$ to the first inequality in \eqref{eq:system}, the second equation must hold for some $m \ll 1$, and so it stands to reason that the number of solutions to \eqref{eq:system} will not be much smaller than the number of solutions to just the first inequality, at least if $m$ is chosen correctly. In fact, we may expect that the sizes of the two solution sets are of the same order of magnitude in terms of $X$ and $Y$. We actually have more control over the value of $m$ in this case than in the original context of integer degrees, allowing us to show existence of solutions with a somewhat smaller value of $c$ than in the case of integer degree. Let $N_{s,\theta}^{\tau}(R,Y,m) = N(R,Y,m)$ denote the number of solutions to \eqref{eq:system}. 

We will proceed via (Freeman's version of) the Davenport-Heilbronn method, which is the variant of the Hardy-Littlewood method used for counting solutions to Diophantine inequalities. For more information on Freeman's adaptation of the Davenport-Heilbronn method, see Freeman \cite{FreemanLB, FreemanAsymp} or the exposition by Wooley \cite{WooleyDHF}. For the application of this method to approximation problems with real degree, see Poulias \cite{Poulias1, Poulthesis}. See also Arkhipov-Zhitkov \cite{ArkhZhit} and Deshouillers \cite{Des} for related results. One may refer to Chapter 20 in \cite{DavBook} for a demonstration of the classical version of this method.  
We start with a lemma about a useful auxiliary function, the kernel function $K$. For any measurable set $S$ we write $\chi_S$ to denote the indicator function of $S$. 
\begin{lemma} \label{lem:kernel} 
Fix $h \in \mathbb{N}$ and $a, b \in \mathbb{R}$ with $0 < a < b$. Then there is an even real function $K(\alpha) = K(\alpha;a,b)$ such that 
\[ K(\alpha) \ll_h \min \{b, |\alpha|^{-1}, |\alpha|^{-h-1}(b-a)^{-h} \} \]
and the function $\psi$ defined by 
\[ \psi(\xi) = \int_{\mathbb{R}} e(\xi\alpha)K(\alpha) \rm{d}\alpha \]
is such that $\chi_{[-a,a]}(\xi) \leq \psi(\xi) \leq \chi_{(-b,b)}(\xi) $.
\end{lemma}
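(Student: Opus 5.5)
This is the standard Davenport--Heilbronn/Freeman kernel, and we will build it by convolving indicator functions. Set $\delta = (b-a)/h$ (so $\delta>0$) and let
\[ \psi = \chi_{[-(a+b)/2,(a+b)/2]} * \phi_\delta^{*h}, \qquad \phi_\delta = \frac{1}{\delta/ (1)}\chi_{[-\delta/(2),\delta/(2)]}\cdot\frac{1}{1}, \]
more precisely $\phi_\delta = \delta^{-1}\chi_{[-\delta/2,\delta/2]}$, a probability density supported on an interval of length $\delta$. The $h$-fold convolution $\phi_\delta^{*h}$ is nonnegative, has total mass $1$, and is supported in $[-h\delta/2,h\delta/2]=[-(b-a)/2,(b-a)/2]$. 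Therefore $0\le\psi\le 1$. Moreover, $\psi(\xi)=1$ whenever $|\xi|\le (a+b)/2-(b-a)/2=a$, and $\psi(\xi)=0$ whenever $|\xi|\ge (a+b)/2+(b-a)/2=b$. Since the supports are closed, $\psi$ vanishes at $|\xi|=b$, so $\chi_{[-a,a]}\le\psi\le\chi_{(-b,b)}$. If desired, one can shrink $\delta$ slightly to get strict containment in $(-b,b)$, at the cost of a constant depending only on $h$.

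Next, define $K$ as the Fourier transform $K(\alpha)=\int_{\mathbb R}e(-\alpha\xi)\psi(\xi)\,d\xi$. Since $\psi$ is even, real, continuous and compactly supported, $K$ is even and real. Fourier inversion recovers $\psi(\xi)=\int e(\xi\alpha)K(\alpha)\,d\alpha$, provided $K\in L^1$, which the decay bound below supplies because $h\ge1$. Inversion holds pointwise because $\psi$ is continuous.

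The bounds come from the product formula
\[ K(\alpha)=\frac{\sin(\pi(a+b)\alpha)}{\pi\alpha}\Big(\frac{\sin(\pi\delta\alpha)}{\pi\delta\alpha}\Big)^{h}. \]
\begin{enumerate}[(i)]
\item The trivial bound $|K(\alpha)|\le\int\psi\le 2b$ gives $K\ll b$.
\item Bounding each sinc factor in the bracket by $1$ gives $|K|\le(\pi|\alpha|)^{-1}$.
\item Bounding the bracket instead by $(\pi\delta|\alpha|)^{-h}$ gives $|K|\ll_h |\alpha|^{-h-1}\delta^{-h}=h^h|\alpha|^{-h-1}(b-a)^{-h}$.
\end{enumerate}
Together these give the claimed minimum bound.

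The only delicate point is the bookkeeping of the supports, so that the plateau is exactly $[-a,a]$ and the support lies inside $(-b,b)$. The choice of center width $(a+b)/2$ with a mollifier of total width $b-a$ handles this. There is no analytic obstacle.
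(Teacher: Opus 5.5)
Your construction is correct and is essentially the one the paper relies on: the paper just cites Lemma 1 of \cite{FreemanAsymp}, whose kernel is exactly your product $\frac{\sin(\pi(a+b)\alpha)}{\pi\alpha}\bigl(\frac{\sin(\pi\delta\alpha)}{\pi\delta\alpha}\bigr)^h$ with $\delta=(b-a)/h$, and the paper's explicit kernels $K_\pm$ in \eqref{eq:kerform} are its $h=1$ instances (take $a+b=2\tau\pm\tilde\tau$ and $b-a=\tilde\tau$). One wording fix: $\psi(\pm b)=0$ holds not because the supports are closed but because $\phi_\delta^{*h}$ is an integrable function, so $\psi$ is continuous and vanishes for $|\xi|>b$; hence no shrinking of $\delta$ is needed.
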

\begin{proof} This is Lemma 1 in \cite{FreemanAsymp}. 
\end{proof} 
Let $\tilde{\tau} = \tau \log(X)^{-1}$. In this setup, we can actually choose the kernel function with even more information available to us. We put 
\begin{align} \label{eq:kerform}
K_{\pm}(\alpha) = \frac{\sin(\pi\alpha\tilde{\tau})\sin(\pi\alpha(2\tau\pm\tilde{\tau}))}{\pi^2 \alpha^2 \tilde{\tau}} = (2\tau\pm\tilde{\tau})\frac{\sin(\pi \alpha \tilde{\tau})}{\pi\alpha\tilde{\tau}} \cdot \frac{\sin(\pi\alpha(2\tau\pm \tilde{\tau}))}{\pi \alpha (2\tau\pm\tilde{\tau})} .
\end{align} 
One may check that $K_{\pm}(\alpha)$ satisfies the conclusion one Lemma \ref{lem:kernel} with $h = 1$; see Lemma 1 and its proof in \cite{FreemanAsymp} or Section 3.2.1 in \cite{Poulthesis}. 
We therefore record the estimate 
\begin{align} \label{eq:kerbound}
K_{\pm}(\alpha) \ll_{\tau} \min\{1, |\alpha|^{-1},\log(X)|\alpha|^{-2} \}
\end{align}
which will be used later on. Moreover, applying the approximation $\sin(x)/x = 1+O(x^2)$ for $|x| < 1$ with $x \neq 0$ to \eqref{eq:kerform} allows us to deduce that 
\begin{align} \label{eq:kerasymp}
K_{\pm}(\alpha) = 2\tau+O(\log(X)^{-1}) 
\end{align}
for $|\alpha| < X^{-\delta}$ for any fixed $\delta > 0$. Equation \eqref{eq:kerasymp} will be particularly useful for the major arc analysis. 
See also Section 3.2.1 in \cite{Poulthesis} for more on this approximation. 
Letting $\psi_{\pm}(\xi)$ be the functions associated with $K_{\pm}(\alpha)$ as referred to in Lemma \ref{lem:kernel}, we have 
\[ \chi_{(-\tau+\tilde{\tau},\tau-\tilde{\tau})} (\xi) \leq \psi_-(\xi) \leq \chi_{(-\tau,\tau)}(\xi) \leq \psi_+(\xi) \leq \chi_{(-\tau-\tilde{\tau},\tau+\tilde{\tau})}(\xi). \]
Moreover, we have that $|\psi_{\pm}(\xi)-\chi_{(-\tau,\tau)}(\xi)| = 0$ whenever $||\xi|-\tau|>\tilde{\tau}$ and is at most $1$ when $||\xi|-\tau|\leq \tilde{\tau}$. 
For $\alpha_1 \in [-1/2,1/2]$ and $\alpha_2 \in \mathbb{R}$, let 
\[ f(\alpha_1,\alpha_2) = \sum_{X-Y < x \leq X+Y} e\big((x-X)\alpha_1+\alpha_2(x^{\theta}-X^{\theta}-\theta (x-X)X^{\theta-1})\big). \]
For a measurable set $\Omega \subseteq [-1/2,1/2]\times \mathbb{R}$, define 
\[ H_{\pm}(R,m;\Omega) = \int_{\Omega} f(\alpha_1,\alpha_2)^se\big(-m\alpha_1-\alpha_2(R-sX^{\theta}-\theta mX^{\theta-1})\big)K_{\pm}(\alpha_2)\d\alpha_2 \d\alpha_1 \]
and let $H_{\pm}(R,m) = H_{\pm}(R,m;[-1/2,1/2]\times \mathbb{R})$. Expanding the definition of $f(\alpha_1, \alpha_2)$ and using orthogonality and Fubini's Theorem, we have 
\[ H_{\pm}(R,m) = \sum_{\substack{X-Y < x_1,\cdots,x_s \leq X+Y \\ x_1+\cdots+x_s=sX+m}} \; \int_{-\infty}^{\infty} e\Big(\alpha_2\big(\sum_{i=1}^s x_i^{\theta}-R\big)\Big)K_{\pm}(\alpha_2)\d\alpha_2. \]
Therefore, by the definition of $\psi_{\pm}$ and the inequalities $\psi_-(\xi) \leq \chi_{(-\tau,\tau)}(\xi) \leq \psi_+(\xi)$, one uses the simple counting observation 
\[ N_{s,\theta}^{\tau}(R,Y,m) = \sum_{\substack{X-Y < x_1,\cdots,x_s \leq X+Y \\ x_1+\cdots+x_s=sX+m}} \, \chi_{(-\tau,\tau)}\Big(\sum_{i=1}^s x_i^{\theta}-R\Big) \]
to arrive at the fundamental identity 
\[ H_-(R,m) \leq N_{s,\theta}^{\tau}(R,Y,m) \leq H_+(R,m). \]
Therefore, we essentially need to prove sufficient lower bounds on $H_-(R,m)$ to prove the needed conclusion. At times, it can be useful to have the similar facts available for $H_+(R,m)$ as well, which is why we also define this generating function, even though at face value the only one we really need is $H_-(R,m)$. In practice, $H_-$ and $H_+$ are essentially interchangeable and all the same estimates hold for both as far as we are concerned, and so we do not strongly distinguish between them and simply work with $H_{\pm}$ and $K_{\pm}$ in general, unless specified otherwise.

The next important task is the decomposition of the infinite strip of integration into the major, minor and trivial arcs. Define the major arcs by 
\[ \mathfrak{M} = \big\{ (\alpha_1,\alpha_2) \in [-1/2,1/2)\times \mathbb{R} : |\alpha_2| \leq 2(5\theta(\theta-1))^{-1}Y^{-1}(X+Y)^{-\theta+2} \big\}, \] 
the minor arcs 
\[ \mathfrak{m} = \big\{ (\alpha_1,\alpha_2) \in [-1/2,1/2)\times \mathbb{R}: 2(5\theta(\theta-1))^{-1} Y^{-1}(X+Y)^{-\theta+2} < |\alpha_2| \leq X^{\nu} \big\} \] 
where $\nu = 2^{-\theta-5}$, and finally the trivial arcs \[ \mathfrak{t} = \{ (\alpha_1, \alpha_2) \in [-1/2,1/2) \times \mathbb{R}: |\alpha_2| > X^{\nu} \}.\] 
As these are disjoint sets, we have $H_{\pm}(R,m) = H_{\pm}(R,m; \mathfrak{M}) + H_{\pm}(R,m; \mathfrak{m}) +H_{\pm}(R,m; \mathfrak{t})$. We will prove non-trivial upper bounds for the integrals over the minor and trivial arcs and a lower bound on the major arcs. 

\section{Minor Arc Bounds} \label{sec:min}

\subsection{Mean Value Estimates} 
The objective of this section is to derive mean value estimates which will be used to derive estimates on the minor arcs. We begin with a lemma, which is similar to Lemma 1 from \cite{ArkhZhit}. For ease of notation, for any integer $i$ define 
\[ b_i = \binom{\theta}{i}. \]

\begin{lemma} \label{lem:arkzhit}
Let $\theta > 2$ and let $m$ be an integer with $m \geq 2\theta $, and let $P > 0$ be a large enough real number. Define $\mathcal{H} = b_2P^{\theta-2}x_2+\cdots+b_mP^{\theta-m}x_m$. Then for any positive real number $t$, the number $T(P)$ of solutions $(x_2, \cdots, x_m)$ with $|x_i| \ll_{i, \theta} tP^{i/2}$ to the inequality 
\[ |\mathcal{H}| \leq 2t \]
satisfies 
\[ T(P) \ll_{\theta} t^mP^{m(m+1)/4-\theta+1/2}. \]
\end{lemma}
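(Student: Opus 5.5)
The plan is to count by peeling off one variable at a time, starting from $x_m$ (smallest coefficient) and working down to $x_2$, tracking how wide a window the remaining partial sum is allowed to occupy. For $2\le k\le m$ and $w>0$, set
\[ \mathcal{H}_k=\sum_{i=2}^k b_iP^{\theta-i}x_i \]
and let $T_k(w)$ be the maximum, over real shifts $\gamma$, of the number of integer tuples $(x_2,\ldots,x_k)$ with $|x_i|\ll tP^{i/2}$ and $|\mathcal{H}_k+\gamma|\le w$. Then $T(P)\le T_m(2t)$. Since $\theta\notin\mathbb{N}$, every $b_i\neq 0$, and the implied constants may depend on these coefficients.

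\emph{Recursive step.} Fix $x_2,\ldots,x_{k-1}$. Then $x_k$ must lie in an interval of length $\ll wP^{k-\theta}$, which gives at most $\ll 1+wP^{k-\theta}$ values of $x_k$. Such an $x_k$ can only exist if the centre of that interval is within $\ll tP^{k/2}$ of the origin. Equivalently, $\mathcal{H}_{k-1}+\gamma$ must lie in a window of width $\ll w+tP^{k/2}\cdot P^{\theta-k}=w+tP^{\theta-k/2}$. This yields
\[ T_k(w)\ll (1+wP^{k-\theta})\,T_{k-1}\big(w+tP^{\theta-k/2}\big), \qquad T_2(w)\ll 1+wP^{2-\theta}, \]
the base case being immediate because $x_2$ has a single coefficient $b_2P^{\theta-2}$.

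\emph{Iteration.}
\begin{itemize}
\item At the top level $w=2t$ and $m\ge 2\theta>\theta$, so the first factor is $\ll tP^{m-\theta}$, and we pass to $T_{m-1}$ with window $\asymp tP^{\theta-m/2}$.
\item Inductively, at level $k<m$ the window is $w_k\asymp tP^{\theta-(k+1)/2}$. This term dominates the accumulated smaller windows because the exponents $\theta-j/2$ decrease in $j$.
\item The level-$k$ factor is therefore $1+w_kP^{k-\theta}=1+tP^{(k-1)/2}\ll tP^{(k-1)/2}$ for $3\le k\le m-1$, provided $t\gg P^{-1/2}$. This holds in the intended application, and small $t$ is handled by monotonicity in $t$.
\item At the bottom, $T_2(w_2)\ll 1+tP^{\theta-3/2}P^{2-\theta}=1+tP^{1/2}$.
\end{itemize}
Multiplying these factors gives
\[ T(P)\ll t^{m-1}P^{\,m-\theta+\sum_{k=2}^{m-1}(k-1)/2}=t^{m-1}P^{\,m-\theta+(m-1)(m-2)/4}. \]
Since $m+(m-1)(m-2)/4=m(m+1)/4+1/2$, the exponent is exactly $m(m+1)/4-\theta+1/2$. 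Bounding $t^{m-1}\ll t^m$ (for $t\gg1$; otherwise keep the sharper power) gives the stated bound.

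\emph{Main obstacle.} The delicate point is the bookkeeping of windows: showing that at each level the newly created window $tP^{\theta-k/2}$ really dominates, and that the ``$+1$'' terms are absorbed. The top level is the one that uses the hypothesis $m\ge2\theta$ (or at least $m>\theta$), to ensure $tP^{m-\theta}\gg1$ so that no $+1$ loss occurs there. If $m$ were close to $\theta$, the $+1$ at level $m$ would cost a factor $P^{\theta-m}$, so I would check that step first. Everything else is routine interval counting.
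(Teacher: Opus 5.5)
\textbf{There is a genuine gap: you drop the initial $2t$ from the windows.} Your recursion $T_k(w)\ll(1+wP^{k-\theta})\,T_{k-1}(w+CtP^{\theta-k/2})$ is valid, but the evaluation is wrong. The window handed down to level $k$ is $w_k=2t+C\sum_{j=k+1}^{m}tP^{\theta-j/2}$. The constant term $2t$ dominates $tP^{\theta-(k+1)/2}$ as soon as $k>2\theta-1$.

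In particular, since $m\ge 2\theta$, the window passed to $T_{m-1}$ is $2t+O(tP^{\theta-m/2})\asymp t$, not $\asymp tP^{\theta-m/2}$; the two agree only when $m=2\theta$ exactly. So for every $k$ with $2\theta-1<k\le m-1$ the correct factor is $1+tP^{k-\theta}$. This exceeds your $tP^{(k-1)/2}$ by $P^{(k+1)/2-\theta}$. Evaluated honestly, your recursion only gives the exponent
\[ \frac{m(m+1)}{4}+\frac12-\theta+\sum_{2\theta<j\le m}\Big(\frac{j}{2}-\theta\Big). \]
For $m=\lfloor 2\theta\rfloor+1$, the case needed in Lemma~\ref{lem:mean}, this is a loss of $P^{m/2-\theta}$.

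That your tally still lands on $m(m+1)/4-\theta+1/2$ is a bookkeeping coincidence. In that case it amounts to claiming $T_{m-1}(t)\ll t^{m-2}P^{(m-1)(m-2)/4}$. The recursion gives $P^{m(m-1)/4+1/2-\theta}$ there, larger by exactly $P^{m/2-\theta}$, and a density heuristic suggests this is the true size. So the saving cannot be recovered at the lower levels.

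\textbf{The missing idea is to cap the number of choices of $x_k$ by its a priori range} $|x_k|\ll tP^{k/2}$, i.e.\ to use the factor $1+\min\{wP^{k-\theta},\,tP^{k/2}\}$ at each level. With the correct windows $w_k\asymp t\max\{1,P^{\theta-(k+1)/2}\}$ this gives:
\begin{itemize}
\item $tP^{k/2}$ for $k>2\theta$;
\item $tP^{r-\theta}$ for the unique integer $r\in(2\theta-1,2\theta]$;
\item $tP^{(k-1)/2}$ for $2\le k<r$.
\end{itemize}
These add up to exactly
\[ \sum_{2\le i<r}\frac{i-1}{2}+(r-\theta)+\sum_{r<i\le m}\frac{i}{2}=\frac{m(m+1)}{4}+\frac12-\theta, \]
which is the paper's count.

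This is also the real role of the hypothesis $m\ge2\theta$. It places $r$ in $[2,m]$ and marks where the trivial range bound overtakes the interval bound. It is not there to make $tP^{m-\theta}\gg1$, as your final paragraph suggests.

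With this correction your argument coincides with the paper's. There one fixes $x_2,\dots,x_{i-1}$ and confines $x_i$ to an interval of length $\ll t(P^{i-\theta}+P^{(i-1)/2})$ about a point depending only on those variables. One then switches to the trivial count $\ll tP^{i/2}$ for $i>2\theta$.
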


\begin{proof} The proof is very similar to the proof of Lemma 1 in \cite{ArkhZhit}. If $|\mathcal{H}| \leq 2t$, then we must have 
\[ b_2P^{\theta-2}x_2+\cdots+b_mP^{\theta-m}x_m = 2\eta t \]
for some $|\eta| \leq 1$. 
As a result, one has 
\[ x_2 = b_2^{-1}(-b_3x_3P^{-1}-b_4x_4P^{-2}-\cdots-b_mx_mP^{2-m}+2t\eta P^{2-\theta}) = b_2^{-1} \Big( 2t\eta P^{2-\theta} - \sum_{i=3}^m b_ix_iP^{-i+2} \Big). \]
Since $|x_i| \ll tP^{i/2}$ for $i = 2, \ldots, m$, we thus have 
\[ |x_2| \ll |t||b_2|^{-1} \Big( 2P^{2-\theta} + \sum_{i=3}^m |b_i|P^{2-i/2} \Big) \ll |t|(P^{1/2}+P^{2-\theta}). \]
Since $\theta > 2$, the right hand side above is $\ll |t|P^{1/2}$, and so for large $P$, we can deduce that $x_2$ can take on at most $\ll tP^{1/2}$ many values. 

Now, choose an integer $i > 2$. Let 
\[ A_i = -b_i^{-1}\sum_{j = 2}^{i-1} b_jP^{\theta-j}x_j. \]
Then by the fact that $\mathcal{H} = 2\eta t$ we must have 
\[ x_i - A_i = -b_i^{-1}\sum_{j=i+1}^m b_jP^{i-j}x_j +2t\eta b_i^{-1}P^{i-\theta}, \]
so that 
\[ |x_i - A_i| \ll b_i^{-1}\Big( 2tP^{i-\theta} + \sum_{j=i+1}^m |b_j||x_j|P^{i-j} \Big) \ll |t|(P^{i-\theta}+P^{(i-1)/2}). \]
Fix the choices of $x_2, \ldots, x_{i-1}$, so that $A_i$ is fixed. If $i- \theta \leq (i-1)/2$, i.e. if $i \leq 2\theta-1$, then $|x_i -A_i| \ll tP^{(i-1)/2}$ and so there are $\ll tP^{(i-1)/2}$ many values that $x_i$ can take once all previous variables are fixed. Instead, if $i > 2\theta-1$, then $|x_i-A_i| \ll tP^{i-\theta} $, and therefore the number of values that $x_i$ can take is $\ll tP^{i-\theta}$. However, we also have the trivial estimate from assumption that $x_i$ can take at most $\ll tP^{i/2}$ many values, which is a better estimate than $\ll tP^{i-\theta}$ as long as $i/2 \leq i-\theta$, i.e. when $2\theta \leq i$. As long as $m \geq 2\theta$, there will be a unique integer $r$ such that $2\theta-1 < r \leq 2\theta$. The total number of solutions to the above inequality is hence $\ll t^m P^{\kappa}$, where 
\[ \kappa = \sum_{2 \leq i \leq r-1} (i-1)/2+\sum_{r+1 \leq i \leq m} i/2 + r-\theta = m(m+1)/4+1/2 - \theta. \]
Putting together the above gives us the required estimate. 
\end{proof}

Now we need one more lemma, which follows from the same method of proof as Lemma 3.2 in \cite{Poulias1}. See also Lemma 2.1 in \cite{Watt}. For a real number $\delta$, let $V_s(\delta)$ be the number of solutions to the system 
\begin{align*} \Big|\sum_{i=1}^s (x_i^{\theta}-x_{i+s}^{\theta})\Big| < \delta, 
\\ 
x_1+\cdots+x_s = x_{i+s}+\cdots+x_{2s} 
\end{align*} 
with $x_i \in (X-Y,X+Y]$ for all $i$. Then the following holds. 
\begin{lemma} \label{lem:count}
Let $\delta$ and $\Delta$ be positive real numbers such that $2\Delta\delta = 1$ and let $s$ be a positive integer. Then 
\[ \delta \int_{-1/2}^{1/2} \int_{-\Delta}^{\Delta} |f(\alpha_1,\alpha_2)|^{2s} \d\alpha_2\d\alpha_1 \leq \frac{\pi^2}{4}V_s( \delta). \]
\end{lemma}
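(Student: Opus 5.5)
This is the standard Fejér-kernel majorant argument in the $\alpha_2$ variable, with orthogonality in the $\alpha_1$ variable. Introduce the nonnegative kernel
\[ F(\alpha) = \Big(\frac{\sin(\pi\delta\alpha)}{\pi\delta\alpha}\Big)^2, \qquad \widehat{F}(\xi)=\int_{\mathbb{R}} F(\alpha)e(\xi\alpha)\d\alpha = \delta^{-1}\max\{0,1-|\xi|/\delta\}. \]
The Fourier transform $\widehat{F}$ is the triangle function supported on $(-\delta,\delta)$ and bounded by $\delta^{-1}$. Since $\sin x/x \geq 2/\pi$ for $|x|\leq \pi/2$, and $|\alpha| \leq \Delta = (2\delta)^{-1}$ gives $|\pi\delta\alpha|\leq \pi/2$, we have $F(\alpha)\geq 4/\pi^2$ on $[-\Delta,\Delta]$.

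First I use this lower bound and the positivity of $|f|^{2s}$ to extend the $\alpha_2$ integration from $[-\Delta,\Delta]$ to all of $\mathbb{R}$. This gives
\[ \int_{-1/2}^{1/2}\int_{-\Delta}^{\Delta}|f|^{2s}\d\alpha_2\d\alpha_1 \leq \frac{\pi^2}{4}\int_{-1/2}^{1/2}\int_{\mathbb{R}}|f(\alpha_1,\alpha_2)|^{2s}F(\alpha_2)\d\alpha_2\d\alpha_1 . \]

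Next I expand $|f|^{2s}=f^s\overline{f}^s$ as a finite sum over $x_1,\dots,x_{2s}\in(X-Y,X+Y]$. The phase is linear in $\alpha_1$ and $\alpha_2$. The shifts by $X$ and by the linear term $\theta(x-X)X^{\theta-1}$ cancel in the $\alpha_1$-coefficient, leaving $\sum_{i\le s}x_i-\sum_{i>s}x_i$. In the $\alpha_2$-coefficient the same cancellation happens exactly when the linear condition holds. Integrating over $\alpha_1\in[-1/2,1/2]$ and using orthogonality forces $x_1+\cdots+x_s=x_{s+1}+\cdots+x_{2s}$. Under that constraint, the constant terms $X^\theta$ cancel, as does the term $-\theta(x-X)X^{\theta-1}$. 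The $\alpha_2$-phase therefore reduces to $\alpha_2\sum_{i=1}^s(x_i^\theta-x_{i+s}^\theta)$. Since the sum is finite and $F$ is integrable, Fubini applies.

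Integrating against $F$ then gives $\widehat{F}\big(\sum_{i=1}^s(x_i^\theta-x_{i+s}^\theta)\big)$. This is at most $\delta^{-1}$, and it vanishes unless $|\sum_{i=1}^s(x_i^\theta-x_{i+s}^\theta)|<\delta$. The double integral is therefore bounded by $\delta^{-1}V_s(\delta)$. Multiplying through by $\delta$ yields the stated inequality.

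The only delicate points are the correct normalization of the Fejér kernel and checking that the $X$-dependent linear terms in the phase of $f$ cancel exactly under the linear constraint. Neither is a real obstacle; the computation is routine once the kernel is chosen.
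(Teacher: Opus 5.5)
Your proof is correct and is essentially the paper's argument: the Fej\'er kernel $(\sin(\pi\delta\alpha_2)/(\pi\delta\alpha_2))^2\geq 4/\pi^2$ on $[-\Delta,\Delta]$ lets you extend the $\alpha_2$-integral to $\mathbb{R}$, orthogonality in $\alpha_1$ imposes $x_1+\cdots+x_s=x_{s+1}+\cdots+x_{2s}$, and the triangle-function Fourier transform restricts the count to $|\sum_{i=1}^s(x_i^\theta-x_{i+s}^\theta)|<\delta$. The only difference is cosmetic: you build the dilation by $\delta$ into the kernel and its transform (hence the factor $\delta^{-1}$), whereas the paper substitutes $u=\delta\alpha_2$ and works with the undilated kernel.
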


\begin{proof} The proof method is essentially the same as in \cite{Poulias1}, but we supply the details for completeness. Define the auxiliary functions $K_0(\alpha) = \operatorname{sinc}^2(\alpha) $ and $\Lambda(x) = \max\{ 0, 1-|x|\}$, where $\operatorname{sinc}(\alpha) = 1 $ if $\alpha =0$ and $\operatorname{sinc}(\alpha) = (\pi\alpha)^{-1}\sin(\pi\alpha)$
otherwise. We then have the well-known identities
\[ K_0(\xi) = \int_{\mathbb{R}} e(-x\xi)\Lambda(x)\d x \quad \text{ and } \quad \Lambda(x) = \int_{\mathbb{R}} e(x\xi)K_0(\xi)\d\xi, \]
as one may see from Lemma 20.1 in \cite{DavBook}. One also has the easily verifiable inequality that $\pi^2K_0(\alpha)/4 \geq 1$ for $|\alpha|\leq 1/2$. For $\mathbf{x} = (x_1,\ldots,x_{2s})$, put 
\[ \sigma_{s,\theta}(\mathbf{x}) = \sum_{i=1}^s \big(x_i^{\theta}-x_{i+s}^{\theta}\big) \quad \text{ and } \quad \xi_0 = (2\delta)^{-1} \sigma_{s,\theta}(\mathbf{x}). \]
Note that for $-\Delta \leq \alpha \leq \Delta$, one has $|\delta\alpha|\leq 1/2$, and so from the estimate on $K_0$ and non-negativity of the integrand we have 
\[ \int_{-\Delta}^{\Delta} |f(\alpha_1,\alpha_2)|^{2s}\d\alpha_2 \leq \frac{\pi^2}{4}\int_{-\Delta}^{\Delta} |f(\alpha_1,\alpha_2)|^{2s}K_0(\delta\alpha_2)\d\alpha_2 \leq \frac{\pi^2}{4}\int_{-\infty}^{\infty}|f(\alpha_1,\alpha_2)|^{2s}K_0(\delta\alpha_2)\d\alpha_2. \]
Inserting this estimate into the left hand side $L$ in the statement of the lemma, we have by orthogonality and Fubini's Theorem that 
\[ L \leq \sum_{\substack{X-Y < x_1,\ldots,x_{2s} \leq X+Y \\ x_1+\cdots +x_s = x_{s+1}+\cdots +x_{2s}}} \frac{\pi^2\delta}{4} \int_{-\infty}^{\infty} e(\alpha_2 \sigma_{s,\theta}(\mathbf{x}))K_0(\delta\alpha_2)\d\alpha_2 = \frac{\pi^2}{4}\sum_{\mathbf{x}} \int_{-\infty}^{\infty} e(2u\xi_0)K_0(u)\d u, \]
where the summation in the right-hand side is over the same range. Note that we used the substitution $u = \delta\alpha$ and the definition of $\xi_0$. Finally, using the relation between $K_0$ and $\Lambda$ and the inequality $0 \leq \Lambda(2\xi_0) \leq \chi_{(-1,1)}(2\xi_0)$, we may estimate the sum on the right above as 
\[ \sum_{\mathbf{x}} \int_{-\infty}^{\infty} e(2u\xi_0)K_0(u)\d u = \sum_{\mathbf{x}} \Lambda(2\xi_0) \leq \sum_{\mathbf{x}} \chi_{(-1,1)}(2\xi_0) = V_s(\delta). \]
\end{proof}

We now arrive at the important mean value estimate that we are after. 

\begin{lemma} \label{lem:mean}
For any integer $s \geq (\lfloor 2\theta \rfloor + 1)(\lfloor 2\theta \rfloor + 2)/2$, any real number $\kappa \geq 1$, and any $\epsilon > 0$, we have 
\[ \int_{-1/2}^{1/2} \int_{-\kappa}^{\kappa} |f(\alpha_1,\alpha_2)|^{2s} \d\alpha_2 \d\alpha_1 \ll \kappa X^{s-\theta+1/2}Y^{\epsilon}.  \]
\end{lemma}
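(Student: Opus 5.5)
We plan to reduce the mean value to a counting problem through Lemma~\ref{lem:count}, Taylor-expand the resulting inequality, and then count solutions by combining the Vinogradov main conjecture with Lemma~\ref{lem:arkzhit}.

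\textbf{Step 1: reduction to $V_s(\delta)$.} Since the integrand is nonnegative and $\kappa\ge 1$, we apply Lemma~\ref{lem:count} with $\Delta=\kappa$ and $\delta=(2\kappa)^{-1}$. This gives
\[ \int_{-1/2}^{1/2}\int_{-\kappa}^{\kappa}|f|^{2s}\d\alpha_2\d\alpha_1 \ll \kappa V_s\big((2\kappa)^{-1}\big)\le \kappa V_s(1/2). \]
It therefore suffices to show that $V_s(1/2)\ll X^{s-\theta+1/2}Y^{\epsilon}$.

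\textbf{Step 2: Taylor expansion.} Write $x_i=X+y_i$ with $|y_i|\le Y\asymp X^{1/2}$, and put $k=\lfloor 2\theta\rfloor+1$, so that $k\ge 2\theta$ (indeed $k>2\theta$ since $\theta\notin\mathbb{N}$ does not force $2\theta\notin\mathbb{N}$, but $\lfloor 2\theta\rfloor+1>2\theta$ in any case). Expanding as in Section~\ref{sec:prelim}, we get
\[ \sum_{i\le s}(x_i^\theta-x_{i+s}^\theta)=\theta X^{\theta-1}\sum_{i}(y_i-y_{i+s})+\sum_{j=2}^{k}b_jX^{\theta-j}\sigma_j+O(X^{\theta-(k+1)/2}), \]
where $\sigma_j=\sum_{i\le s}(y_i^j-y_{i+s}^j)$. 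The linear term vanishes because of the linear constraint in $V_s$. The remainder is $O(X^{\theta-(k+1)/2})=o(1)$ since $k+1>2\theta$. Every solution counted by $V_s(1/2)$ therefore satisfies $|\sum_{j=2}^k b_jX^{\theta-j}\sigma_j|\le 1$ with integers $\sigma_1=0$ and $|\sigma_j|\ll sY^j\ll X^{j/2}$.

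\textbf{Step 3: fibering over the power sums.} We sort the solutions according to the vector $(\sigma_2,\dots,\sigma_k)$ and estimate
\[ V_s(1/2)\le \sum_{(\sigma_2,\dots,\sigma_k)\in\mathcal{S}} J(\sigma_1=0,\sigma_2,\dots,\sigma_k), \]
where $J$ counts the $2s$-tuples $y$ with the prescribed power sums and $\mathcal{S}$ is the set of admissible vectors found in Step~2.
\begin{itemize}
\item[(a)] \emph{Uniform bound on each fibre.} Each $J(\cdot)$ is at most
\[ \int_{[0,1]^k}\Big|\sum_{|y|\le Y}e(\beta_1y+\cdots+\beta_ky^k)\Big|^{2s}\d\boldsymbol\beta, \]
by orthogonality together with the triangle inequality (the twist by $e(-\boldsymbol\beta\cdot\boldsymbol\sigma)$ has modulus one). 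Since $s\ge k(k+1)/2$, the main conjecture in Vinogradov's mean value theorem \cite{BDG,WooNEC} bounds this by $\ll Y^{2s-k(k+1)/2+\epsilon}$.
\item[(b)] \emph{Counting the fibres.} The number of vectors in $\mathcal{S}$ is the count $T(P)$ of Lemma~\ref{lem:arkzhit} with $P=X$, $m=k$ and $t\asymp 1$. That lemma gives $\#\mathcal{S}\ll X^{k(k+1)/4-\theta+1/2}$.
\end{itemize}

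\textbf{Step 4: combining.} Since $Y\asymp X^{1/2}$, we have $Y^{2s-k(k+1)/2}\asymp X^{s-k(k+1)/4}$, and the product of the two bounds is
\[ X^{s-k(k+1)/4}\cdot X^{k(k+1)/4-\theta+1/2}\,Y^{\epsilon}=X^{s-\theta+1/2}Y^{\epsilon}. \]
Adjusting $\epsilon$ absorbs the implicit constants, which completes the argument.

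\textbf{Main obstacle.} The delicate step is the bookkeeping around Lemma~\ref{lem:arkzhit}. First, the hypothesis $s\ge k(k+1)/2$ is needed for Vinogradov to give the sharp exponent without an extra diagonal loss. Second, the admissible range $|x_j|\ll tP^{j/2}$ and the condition $m\ge2\theta$ must be matched with the sizes $|\sigma_j|\ll X^{j/2}$ and with $k=\lfloor2\theta\rfloor+1$. Third, the remainder term must be controlled rigorously, so that the inequality $|\mathcal{H}|\le 2t$ holds with $t$ an absolute constant.
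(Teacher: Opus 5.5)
Your outline follows the paper's proof step for step: Lemma~\ref{lem:count} with $\delta=(2\kappa)^{-1}$ and monotonicity to reach $V_s(1/2)$, Taylor expansion to order $k=\lfloor 2\theta\rfloor+1$, the linear constraint killing the first-order term, fibring over power sums with $J_{s,k}(Y;\mathbf h)\le J_{s,k}(Y)$ and the main conjecture, and Lemma~\ref{lem:arkzhit} to count the fibres. However, Step~2 contains a false claim on which both halves of Step~3 rest.

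You expand about $X$ itself, $x_i=X+y_i$, and then treat $\sigma_j=\sum_{i\le s}(y_i^j-y_{i+s}^j)$ as integers. But $X=(R/s)^{1/\theta}$ is in general not an integer, so the $y_i$ are shifted integers and the $\sigma_j$ need not be integral. Write $X=\lfloor X\rfloor+\rho$, $w_i=x_i-\lfloor X\rfloor\in\mathbb{Z}$ and $h_j=\sum_{i\le s}(w_i^j-w_{i+s}^j)$. The constraint $h_1=0$ gives $\sigma_2=h_2$, but $\sigma_3=h_3-3\rho h_2$, which is typically not an integer.

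This breaks (a) as written. The orthogonality relation on $[0,1]^k$ only applies to integral power-sum vectors and integral targets, and your exponential sum runs over integers $y$, which are not the $y_i=x_i-X$. It also breaks (b). Lemma~\ref{lem:arkzhit} counts integer tuples: its proof counts, coordinate by coordinate, the integers in an interval of controlled length. Your set $\mathcal{S}$, by contrast, consists of real vectors lying on a non-integral shear of the lattice.

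The repair is exactly what the paper does: Taylor-expand about an integer base point, namely $T=\lfloor X-Y\rfloor$ with $z_i=x_i-T\in[1,2Y+1]$; centering at $\lfloor X\rfloor$ would work equally well. Then the following all hold:
\begin{enumerate}
\item the $h_j=\sum_{i\le s}(z_i^j-z_{i+s}^j)$ are genuine integers with $h_1=0$ and $|h_j|\le s(2Y+1)^j\ll T^{j/2}$;
\item the reduced inequality is $\bigl|\sum_{j=2}^k b_jT^{\theta-j}h_j\bigr|<1$;
\item the fibre bound $J_{s,k}(Y;\mathbf h)\le J_{s,k}(Y)\ll Y^{2s-k(k+1)/2+\epsilon}$ follows from genuine orthogonality;
\item Lemma~\ref{lem:arkzhit} applies verbatim with $P=T\asymp X$.
\end{enumerate}

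Alternatively, you could keep your centering at $X$ and pass to $\mathbf h$ via the unipotent change of variables $\sigma_j=\sum_{l\le j}\binom{j}{l}(-\rho)^{j-l}h_l$. The resulting linear form in $\mathbf h$ has coefficients $b_lX^{\theta-l}(1+O(X^{-1}))$ rather than exactly $b_lP^{\theta-l}$, so you would have to rerun the proof of Lemma~\ref{lem:arkzhit} instead of citing it. With either change, your exponent bookkeeping in Step~4 goes through unchanged.
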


\begin{proof} By Lemma \ref{lem:count}, 
we have 
\[ \frac{1}{2\kappa}\int_{-1/2}^{1/2} \int_{-\kappa}^{\kappa} |f(\alpha_1,\alpha_2)|^{2s} \d\alpha_2 \d\alpha_1 \ll V_s\Big( \frac{1}{2\kappa}\Big), \]
and since $\kappa \geq 1$ we trivially have that $V_s(1/(2\kappa)) \leq V_s(1/2)$, and so it suffices to show that $V_s(1/2) \ll X^{s-\theta+1/2}Y^{\epsilon}$. Let $T = \lfloor X-Y\rfloor$ and $z_i = x_i-T$. Thus, $1 \leq z_i \leq 2Y+1$. 
Using a Taylor expansion with $k$ to be chosen later in terms of $\theta$, one has 
\begin{align*} \Big|\sum_{i=1}^s ((T+z_i)^{\theta}-(T+z_{i+s})^{\theta})\Big| & = \Big|\sum_{i=1}^s \sum_{j=0}^k \binom{\theta}{j}T^{\theta}\big((z_i/T)^j-(z_{i+s}/T)^j\big) + T^{\theta}\sum_{i=1}^k r_k(z_i/T) \Big| \\ 
& = \Big|\sum_{j=1}^k \binom{\theta}{j}T^{\theta-j}\sum_{i=1}^s (z_i^j-z_{i+s}^j) +T^{\theta}\sum_{i=1}^s r_k(z_i/T)\Big|. 
\end{align*}
Now, as before, we note that $r_k(z_i/T) \ll |z_i/T|^{k+1} \ll X^{-k/2-1/2}$. Therefore, if we choose $k$ such that $k > 2\theta-1$, then 
\[ \Big|T^{\theta}\sum_{i=1}^s r_k(z_i/T)\Big| \ll X^{\theta-k/2-1/2} = o(1). \]
For large $X$, we then see that the remainder is $< 1/2$. As a result, $V_s(1/2)$ is bounded above by the number of solutions to the system 
\begin{align*} 
\Big|\sum_{j=1}^k \binom{\theta}{j}T^{\theta-j}\sum_{i=1}^s (z_i^j-z_{i+s}^j)\Big| < 1, \\ 
z_1+\cdots+z_s = z_{s+1}+\cdots+z_{2s}
\end{align*}
for large $X$. Using the second equation in the first inequality, the inequality reduces to 
\[ \Big|\sum_{j=2}^k \binom{\theta}{j}T^{\theta-j}\sum_{i=1}^s (z_i^j-z_{i+s}^j)\Big| < 1. \]
Therefore, the number of solutions to the system above is bounded above by the number of solutions to the system 
\begin{align*} 
 & \Big|\sum_{j=2}^k \binom{\theta}{j}T^{\theta-j}h_j\Big| < 1, \\ 
& \sum_{i=1}^s (z_i^j-z_{i+s}^j) = h_j \quad \quad (1 \leq j \leq k),
\end{align*}
where $1 \leq z_i \leq 2Y+1$ and $h_1 = 0$. Call this number $Z_{s,k}(Y)$, and let $\mathbf{h} = (h_1, \cdots, h_k)$. Note that clearly we must have $|h_j| \leq s(2Y+1)^j$ for each $2 \leq j \leq k$. Let $J_{s,k}(Y; \mathbf{h})$ be the number of solutions to the system 
\[ \sum_{i=1}^s (z_i^j - z_{i+s}^j) = h_j \quad \quad (1 \leq j \leq k). \]
In the special case $\mathbf{h} = 0$ write this quantity as $J_{s,k}(Y)$. Then by orthogonality and use of the triangle inequality, one has $J_{s,k}(Y, \mathbf{h}) \leq J_{s,k}(Y)$, and applying the now resolved Main Conjecture of Vinogradov's Mean Value Theorem (see Theorem 1.1 in \cite{BDG} and the results of \cite{WooNEC}), whenever $s \geq k(k+1)/2$ one has 
\[ J_{s,k}(Y, \mathbf{h}) \leq J_{s,k}(Y) \ll Y^{2s-k(k+1)/2+\epsilon}. \]
Therefore, letting 
\[ \mathcal{F}(\mathbf{h}) = \sum_{j=2}^k \binom{\theta}{j} T^{\theta-j}h_j, \] 
we see 
\[ Z_{s,k}(Y) \leq \sum_{\substack{|\mathcal{F}(\mathbf{h})| < 1 \\ |h_j| \leq s(2Y+1)^{j} \\ 2 \leq j \leq k}} J_{s,k}(Y; \mathbf{h}) \ll Y^{2s-k(k+1)/2+\epsilon} \sum_{\substack{|\mathcal{F}(\mathbf{h})| < 1 \\ |h_j| \leq s(2Y+1)^{j} \\ 2 \leq j \leq k}} 1. \]
Our final step is to apply Lemma \ref{lem:arkzhit}, which supplies a bound for the number of solutions to the inequality $|\mathcal{F}(\mathbf{h})| < 1<s$, where $h_1 = 0$ and $|h_j| \leq s(2Y+1)^j$ for each $2 \leq j \leq k$. Applying this with $k = m = \lfloor 2\theta \rfloor + 1$ and noting that $Y \asymp X^{1/2} \asymp T^{1/2}$, we see 
\[ Z_{s,k}(Y) \ll Y^{2s-k(k+1)/2+\epsilon}X^{m(m+1)/4-\theta+1/2} \ll X^{s-\theta+1/2}Y^{\epsilon}. \]
Since we only require $s \geq k(k+1)/2 = (\lfloor 2\theta \rfloor + 1)(\lfloor 2\theta \rfloor + 2)/2$, this completes the proof. 
\end{proof}

\subsection{Weyl-type bounds on the Minor Arcs} 

The next task is to prove $L^{\infty}$ bounds for $f(\alpha_1,\alpha_2)$ on $\mathfrak{m}$. We will make use of a classical version of Van der Corput's $k$th derivative test, which we formulate below. 
Before we proceed, we must note that the bounds we prove here are quite crude. One can almost certainly improve the estimates we prove using different or more modern estimates. However, it is probably also the case that such proofs will be significantly longer and more involved. Because even weak non-trivial estimates suffice for our purposes, we do not pursue stronger estimates here, even though these stronger bounds may be of independent interest. We do however mention some relevant literature for interested readers; for classical versions of Van der Corput's method one may refer to the book by Graham and Kolesnik \cite{GraKol}, and for a more modern view of Van der Corput's method see Robert \cite{Robert}. A classical treatment of Vinogradov's method applied to exponential sums related to $f(\alpha_1,\alpha_2)$ is given in the book by Vinogradov \cite{Vino} while modern developments in the theory of Vinogradov's method are used by K\"{u}fner \cite{Kuf} to prove bounds on a similar exponential sum. 
\begin{lemma} \label{lem:VDC1}
Let $\beta \geq 1$ be a real number and $k \geq 2$ be an integer. There exists $C(\beta, k) > 0$ such that for any integer $M \geq 1$, any real number $\lambda_k$ and any $C^k$ function $g: [1, M] \to \mathbb{R}$ satisfying
\[ \lambda_k \leq |g^{(k)}(x)| \leq \beta \lambda_k \quad \text{ for } \quad x \in [1, M], \]
one has 
\[ \Big|\sum_{1 \leq x \leq M} e(g(x))\Big| \leq C(\beta, k) (M \lambda_k^{1/(2^k-2)} + M^{1-2^{2-k}} \lambda_k^{-1/(2^k-2)}). \]
\end{lemma}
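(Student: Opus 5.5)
This is the classical van der Corput $k$th derivative test, and I would prove it by induction on $k$, with the Kusmin--Landau/second derivative test as the base case and Weyl--van der Corput differencing as the inductive step.

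\emph{Base case $k=2$.} The claimed bound reads $C(M\lambda_2^{1/2}+\lambda_2^{-1/2})$. This is the standard second derivative test. I would split $[1,M]$ into maximal intervals on which $g'$ stays within $\lambda_2^{1/2}$ of some integer, and intervals on which $\|g'\|\ge \lambda_2^{1/2}$.
\begin{itemize}
\item Since $g''\asymp\lambda_2$ with $g''$ of constant sign (continuity, and $|g''|\ge\lambda_2>0$), $g'$ is monotone with total variation at most $\beta\lambda_2 M$. Hence there are $\ll \lambda_2M+1$ intervals of the first kind.
\item Each interval of the first kind has length $\ll \lambda_2^{-1/2}$, so we bound it trivially.
\item On the intervals of the second kind, Kusmin--Landau gives $\ll \lambda_2^{-1/2}$ per interval, since $g'$ is monotone with distance at least $\lambda_2^{1/2}$ from the integers.
\end{itemize}
Summing gives $\ll (\lambda_2M+1)\lambda_2^{-1/2}=M\lambda_2^{1/2}+\lambda_2^{-1/2}$. (If $\lambda_2\ge1$ the bound is trivial anyway.)

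\emph{Inductive step.} Assume the statement for $k-1\ge2$ and set $K=2^{k-1}$. Apply the Weyl--van der Corput inequality: for any integer $1\le H\le M$,
\[
\Big|\sum_{x\le M} e(g(x))\Big|^2\ll \frac{M^2}{H}+\frac{M}{H}\sum_{1\le h<H}\Big|\sum_{x\in I_h} e(g(x+h)-g(x))\Big|,
\]
where $I_h$ is an interval of length at most $M$. The differenced phase $g_h(x)=g(x+h)-g(x)$ satisfies $g_h^{(k-1)}(x)=h\,g^{(k)}(\xi)$ by the mean value theorem. Since $g^{(k)}$ has constant sign and lies between $\lambda_k$ and $\beta\lambda_k$ in absolute value, we get $h\lambda_k\le|g_h^{(k-1)}|\le\beta h\lambda_k$. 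The same $\beta$ therefore works, which is why the constant depends only on $(\beta,k)$.

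The inductive hypothesis (after shifting the interval to start at $1$) gives
\[
\ll M(h\lambda_k)^{1/(K-2)}+M^{1-2^{3-k}}(h\lambda_k)^{-1/(K-2)}.
\]
Summing over $h<H$ yields
\[
|S|^2\ll \frac{M^2}{H}+M^2(H\lambda_k)^{1/(K-2)}+M^{2-2^{3-k}}(H\lambda_k)^{-1/(K-2)}.
\]

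\emph{Choice of $H$.} This is where the main work lies. I would take $H\asymp\lambda_k^{-1/(K-1)}$, which balances the first two terms: both become $M^2\lambda_k^{1/(K-1)}$, and $2^k-2=2(K-1)$ gives, after taking square roots, the main term $M\lambda_k^{1/(2^k-2)}$. With this choice $H\lambda_k=\lambda_k^{(K-2)/(K-1)}$, so the third term equals $M^{2-2^{3-k}}\lambda_k^{-1/(K-1)}$. Its square root is $M^{1-2^{2-k}}\lambda_k^{-1/(2^k-2)}$, which is exactly the second term in the claim.

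\emph{Boundary cases and the main obstacle.} The constraint $1\le H\le M$ must be checked.
\begin{itemize}
\item If $\lambda_k^{-1/(K-1)}>M$, take $H=M$. Then $M^2/H=M$, and this is $\le M^{2-2^{3-k}}\lambda_k^{-1/(K-1)}$ because $\lambda_k^{-1/(K-1)}>M\ge M^{2^{3-k}-1}$. The remaining terms are monotone in $H$ in the right direction, so they are also dominated.
\item If $\lambda_k\ge1$, the claimed bound exceeds $M$, so the trivial bound $|S|\le M$ suffices.
\end{itemize}
This exponent bookkeeping for the edge cases is the step I expect to be most error-prone rather than conceptually hard; if it becomes delicate, I would instead cite Graham--Kolesnik, Theorem 2.8, which states this result verbatim.
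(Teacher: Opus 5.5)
Your route is the classical one: Kusmin--Landau and the second-derivative test as the base case, then Weyl--van der Corput differencing with $H\asymp\lambda_k^{-1/(K-1)}$, $K=2^{k-1}$, as the inductive step. The paper gives no argument of its own and simply cites Theorem 3 of \cite{Robert}. Your base case and the main case of the induction are correct.

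\textbf{The gap.} Your treatment of the boundary case $\lambda_k^{-1/(K-1)}>M$ fails as written. You claim the remaining terms are dominated by monotonicity. But the third term $M^{2-2^{3-k}}(H\lambda_k)^{-1/(K-2)}$ is \emph{decreasing} in $H$. Replacing $H\asymp\lambda_k^{-1/(K-1)}$ by the smaller value $H=M$ therefore makes it larger, not smaller. Explicitly,
\[
M^{2-2^{3-k}}(M\lambda_k)^{-1/(K-2)}=M^{2-2^{3-k}}\lambda_k^{-1/(K-1)}\bigl(M\lambda_k^{1/(K-1)}\bigr)^{-1/(K-2)}.
\]
The last factor exceeds $1$ throughout this regime and tends to infinity as $\lambda_k\to 0$ with $M$ fixed. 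For $k=3$, for instance, $H=M$ only yields $|S|^2\ll M^{1/2}\lambda_3^{-1/2}$, whereas you need $|S|^2\ll M\lambda_3^{-1/3}$. So differencing with $H=M$ does not prove the claim there.

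\textbf{The repair.} In this regime, do not difference at all. If $\lambda_k^{-1/(K-1)}>M$, then $\lambda_k^{-1/(2^k-2)}=\bigl(\lambda_k^{-1/(K-1)}\bigr)^{1/2}>M^{1/2}$. Hence the second term of the claimed bound exceeds $M^{3/2-2^{2-k}}$, which is at least $M$ since $k\geq 3$. The trivial estimate $|S|\le M$ then suffices. With this replacement, your induction proves the lemma with a constant depending only on $\beta$ and $k$.

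\textbf{Your fallback citation.} It is not verbatim. The paper itself describes Theorem 2.8 of \cite{GraKol} as ``another, similar formulation''. The two-term form stated here is the classical one (e.g.\ Titchmarsh, \emph{The Theory of the Riemann Zeta-Function}, Theorem 5.13), and it is what the paper takes from \cite{Robert}.
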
 

\begin{proof} See Theorem 3 in \cite{Robert}. 
\end{proof} 
For another, similar formulation of Van der Corput's test, see Theorem 2.8 in \cite{GraKol}. Recalling that $T = \lfloor X-Y \rfloor$, we apply this lemma to the function $g(t) = \alpha_1 t +\alpha_2\big((t+T)^{\theta}-X^{\theta}-\theta tX^{\theta-1}\big)$. 

\begin{lemma} \label{lem:weyl}
Let $(\alpha_1, \alpha_2) \in \mathfrak{m}$, so that $ 2 (5\theta(\theta-1))^{-1}Y^{-1}(X+Y)^{-\theta+2} \leq |\alpha_2| \leq X^{\nu} $. 
Then 
\[ |f(\alpha_1,\alpha_2)| = \Big|\sum_{1 \leq x \leq X+Y-T } e(g(x)) \Big| \ll Y^{1-\frac{1}{2^{\theta+\nu+3}}}. \]
\end{lemma}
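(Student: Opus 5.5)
We apply Lemma \ref{lem:VDC1} directly to $g(t) = \alpha_1 t+\alpha_2\big((t+T)^{\theta}-X^{\theta}-\theta tX^{\theta-1}\big)$ on $[1,M]$, where $M = X+Y-T \asymp Y$. First, after shifting $x \mapsto x - T$ and noting that $e(\alpha_1(x-X))$ differs from $e(\alpha_1 t)$ by a unimodular constant, $|f(\alpha_1,\alpha_2)|$ equals the stated sum. Only derivatives of order $k\ge 2$ enter Lemma \ref{lem:VDC1}, and the linear terms vanish there, so
\[ g^{(k)}(t) = \alpha_2\,\theta(\theta-1)\cdots(\theta-k+1)(t+T)^{\theta-k}. \]
Because $\theta\notin\mathbb{N}$, the falling factorial never vanishes, and $t+T\in[X-Y-1,X+Y]$ with $Y=o(X)$. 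Hence $|g^{(k)}(t)|$ lies in $[\lambda_k,\beta\lambda_k]$ with $\lambda_k \asymp_{\theta,k} |\alpha_2| X^{\theta-k}$ and $\beta$ bounded, say $\beta=2$ for large $X$. The precise constant $2(5\theta(\theta-1))^{-1}Y^{-1}(X+Y)^{2-\theta}$ in the definition of $\mathfrak{m}$ matters only for the major arcs; here only its order of magnitude is used.

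Next we choose $k$ according to the size of $\alpha_2$, aiming for $\lambda_k$ roughly of size between a negative power of $Y$ and $1$. We have $|\alpha_2|\gg Y^{-1}X^{2-\theta}\asymp X^{3/2-\theta}$ and $|\alpha_2|\le X^{\nu}$, so $\lambda_k \asymp |\alpha_2|X^{\theta-k}$ lies between $X^{3/2-k}$ and $X^{\theta-k+\nu}$. Take $k$ to be the least integer with $\lambda_k \le Y^{-\eta}$ for a small fixed $\eta$, so that $k \le \lfloor\theta\rfloor+2$. Since $\lambda_{k-1}\asymp X\lambda_k$ and $\lambda_{k-1}>Y^{-\eta}$ by minimality, we get $\lambda_k \gg Y^{-\eta}X^{-1}\asymp Y^{-2-\eta}$.

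The first term of Lemma \ref{lem:VDC1} is then $M\lambda_k^{1/(2^k-2)}\ll Y^{1-\eta/(2^k-2)}$. The second term is
\[ M^{1-2^{2-k}}\lambda_k^{-1/(2^k-2)} \ll Y^{1-2^{2-k}+(2+\eta)/(2^k-2)}, \]
and its exponent is below $1$ provided $(2+\eta)/(2^k-2)<4/2^k$, i.e.\ for $k\ge 3$ and small $\eta$. The case $k=2$ occurs only for the smallest $\alpha_2$. There $\lambda_2\gg Y^{-1}X^{2-\theta}X^{\theta-2}=Y^{-1}$, which again suffices because the admissible range of $\lambda_k$ is bounded away from the dangerous endpoint. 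In particular, at the lower edge $\alpha_2 \asymp X^{3/2-\theta}$ with $k=2$, the second term is $M^0\lambda_2^{-1/2}\ll Y^{1/2}$, which is fine.

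The main obstacle is the bookkeeping: we must check that in every case both terms give a saving of at least $Y^{-2^{-(\theta+\nu+3)}}$, with $k\le\theta+\nu+3$ or so, using the exponent $1/(2^k-2)\ge 2^{-k}$. The cleanest route is to pick $k$ with $\lambda_k\in[Y^{-2}\cdot X^{-\nu'}, 1]$ after accounting for the upper bound $|\alpha_2|\le X^{\nu}$. This forces $k\le \theta+\nu+2$ or so, whence both terms save at least $Y^{-1/2^{\theta+\nu+3}}$. Finally we absorb the constants $C(\beta,k)$, which range over finitely many $k$.
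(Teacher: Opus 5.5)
Your strategy is the paper's: van der Corput's $k$-th derivative test (Lemma~\ref{lem:VDC1}) with $k$ chosen so that $\lambda_k\asymp|\alpha_2|X^{\theta-k}$ sits in a window of ratio $\asymp X$ just below a fixed negative power of $Y$. The lower end of that window comes from minimality of $k$, or, when $k=2$, from the minor-arc bound $\lambda_2\gg Y^{-1}$. That part is sound and gives a power saving. The gap is in the step that must produce the specific exponent $1-2^{-(\theta+\nu+3)}$, which you leave as ``bookkeeping''.

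\begin{itemize}
\item \textbf{The window in your last paragraph fails.} For $\lambda_k\in[Y^{-2}X^{-\nu'},1]$ with $\lambda_k\asymp 1$, the first term $M\lambda_k^{1/(2^k-2)}$ is $\asymp Y$, so there is no saving at all.
\item \textbf{A small $\eta$ in your earlier rule also fails.} When $\lambda_k\approx Y^{-\eta}$, the first term saves only $\eta/(2^k-2)$. Suppose $\theta+\nu\in(3-\eta/2,3)$ and $|\alpha_2|$ is of size about $X^{3-\theta-\eta/2}$; this is $\le X^{\nu}$ and lies in $\mathfrak{m}$. Then $\lambda_2\approx X^{1-\eta/2}>Y^{-\eta}$, so your rule selects $k=3$ with $\lambda_3\approx Y^{-\eta}$, giving saving $\eta/6$. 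Meanwhile $2^{-(\theta+\nu+3)}>2^{-6}$, so for $\eta<3/32$ the stated bound does not follow from Lemma~\ref{lem:VDC1} with your choice of $k$.
\end{itemize}

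The repair is to fix $\eta=1/2$ rather than a small value. This is exactly the paper's choice $k=\lfloor\theta+\lambda+5/4\rfloor$ with $|\alpha_2|=X^{\lambda}$, i.e.\ $\lambda_k\in[Y^{-5/2},Y^{-1/2}]$ up to constants. The case $k=2$ (roughly $|\alpha_2|\le X^{7/4-\theta}$) is handled as you do, with savings $1/4$ and $1/2$. Minimality now gives $k\le\theta+\nu+5/4$. For $k\ge3$ the two terms save
\begin{itemize}
\item $\frac{1/2}{2^k-2}\ge 2^{-(k+1)}$, and
\item $4\cdot 2^{-k}-\frac{5/2}{2^k-2}\ge 2^{-(k+1)}$, which is equivalent to $2^k\ge 7$.
\end{itemize}
Since $k+1\le\theta+\nu+9/4$, both savings exceed $2^{-(\theta+\nu+3)}$. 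Even your cruder bound $k\le\lfloor\theta\rfloor+2$ suffices once $\eta=1/2$.
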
 

\begin{proof} The first equality is obvious. We now calculate the derivatives of $g$. We see 
\[ g'(x) = \alpha_1 + \alpha_2 \big( \theta(x+T)^{\theta-1}-\theta X^{\theta-1} \big). \]
For $k \geq 2$, one has 
\[ g^{(k)}(x) = \alpha_2 \theta(\theta-1)\cdots (\theta-k+1)(x+T)^{\theta-k}. \]
At this point, we may assume without loss of generality that $\alpha_2 \geq 0$, since flipping the sign of $\alpha_2$ has no effect on the magnitude of $g^{(k)}(x)$. Then since $Y \asymp X^{1/2}$, we see that for large $X$,
\[ |g^{(k)}(x)| = \alpha_2 |\theta \cdots (\theta-k+1)|(T+x)^{\theta-k} \geq \alpha_2 |\theta \cdots (\theta-k+1)| (X/2)^{\theta-k}.  \]
At the same time, one also has 
\[ |g^{(k)}(x)| = \alpha_2 |\theta \cdots (\theta-k+1)|(T+x)^{\theta-k} \leq \alpha_2 |\theta \cdots (\theta-k+1)| (2X)^{\theta-k}. \]
Therefore, we may let $\lambda_k = \alpha_2 |\theta \cdots (\theta-k+1)| (X/2)^{\theta-k}$ and one has 
\[ \lambda_k \leq |g^{(k)}(x)| \leq 4^{\theta-k}\lambda_k. \]
We may replace the upper bound $X+Y-T$ in the sum with $2Y$ at the cost of at most $O(1)$ terms. We then have by Lemma \ref{lem:VDC1} with $\lambda_k$ as above and $\beta = 4^{\theta-k}$ that 
\[ \Big|\sum_{1 \leq x \leq 2Y} e(g(x)) \Big| \leq C(\beta, k) \big( (2Y)\lambda_k^{\frac{1}{2^k-2}} + (2Y)^{1-2^{2-k}} \lambda_k^{-1/(2^k-2)} \big). \]
Note that $C(\beta,k)$ only depends on $\theta$ and $k$. So, we may call this constant $C(\theta, k)$.

Let $\lambda = \log(\alpha_2)/\log(X) $. Since $Y \asymp X^{1/2}$, it follows that $ X^{\nu} \geq |\alpha_2| \gg X^{-\theta+3/2}$, and so $\nu \geq \lambda \geq -\theta+1.4$ for large $X$. We consider two cases, the first being when $-\theta+1.4 \leq \lambda \leq -\theta+1.75$ and the second being when $-\theta + 1.75 \leq \lambda \leq \nu$. In the first case, we apply Van der Corput's $k$th derivative test with $k = 2$. Of course we then have $2^k-2 = 2$ and $2^{2-k}=1$. Notice also that $-0.6 \leq \theta+\lambda-k \leq -1/4 $. Since $\alpha_2X^{\theta-k} = X^{\lambda+\theta-k}$ and $X \asymp Y^2$, we then have the bounds $\alpha_2 X^{\theta-k} \ll X^{-1/4} \ll Y^{-1/2}$, and $\alpha_2X^{\theta-k} \gg X^{-3/5} \gg Y^{-6/5}$. Therefore, we have 
\[ \Big|\sum_{1 \leq x \leq 2Y} e(g(x))\Big| \ll_{\theta} \big( Y(\alpha_2X^{\theta-k})^{\frac{1}{2}}+(\alpha_2X^{\theta-k})^{-\frac{1}{2}}\big) \ll Y^{1-1/4}+(Y^{6/5})^{1/2} \ll Y^{1-\frac{1}{2^\theta}}, \]
where in the last line we used that $\theta > 2$. Obviously, $Y^{1-\frac{1}{2^{\theta}}} \ll Y^{1-\frac{1}{2^{\theta+\nu+3}} } $ and so the claim holds in this case. 

In the case $-\theta+1.75 \leq \lambda \leq \nu$, let $k = \lfloor \theta+\lambda +1.25 \rfloor $. Note that $3 \leq k \leq \theta+\nu+1.25 \leq 2\theta$. Also, $-1.25 \leq \lambda+\theta-k \leq -0.25$, since $-1.25 \leq t-
\lfloor t+1.25 \rfloor \leq -0.25 $ for all $t \in \mathbb{R}$. Let $C^*(\theta) = \max_{3 \leq i \leq 3\theta} \{C(\theta, i)\}$. As such, we then have 
\begin{align*}
\Big|\sum_{1 \leq x \leq 2Y} e(g(x)) \Big| & \leq  C^*(\theta) \big( (2Y)(\alpha_2X^{\theta-k})^{\frac{1}{2^k-2}} + (2Y)^{1-2^{2-k}} (\alpha_2 X^{\theta-k})^{-1/(2^k-2)} \big) 
\\ & \leq  2C^*(\theta)\big( Y(\alpha_2X^{\theta-k})^{\frac{1}{2^k-2}} + Y^{1-\frac{4}{2^k}}(\alpha_2X^{\theta-k})^{-\frac{1}{2^k-2}} \big).  
\end{align*}
Again since $ \alpha_2X^{\theta-k} = X^{\lambda+\theta-k}$, and $-1.25 \leq\lambda+\theta-k \leq -0.25$, we have the bounds 
\[ Y^{-2.5} \ll X^{-1.25} \ll  \alpha_2X^{\theta-k} \leq X^{-0.25} \ll  Y^{-0.5}. \]
Therefore, we see 
\[ \Big|\sum_{1 \leq x \leq 2Y} e(g(x)) \Big| \leq 2C^*(\theta) \big( c^{\frac{1}{2^k-2}} Y^{1-\frac{1/2}{2^k-2}} + c^{-\frac{1}{2^k-2}} Y^{1-\frac{4}{2^k}+\frac{5/2}{2^k-2}} \big). \]
Now we may use the bounds $3 \leq k \leq 2\theta$ to conclude that there is some constant $B = B(\theta)$ such that 
\[ \Big|\sum_{1 \leq x \leq 2Y} e(g(x))\Big| \leq B(\theta) \big(Y^{1-\frac{1/2}{2^k-2}} + Y^{1-\frac{4}{2^k}+\frac{5/2}{2^k-2}} \big). \]
Now, note that 
\[ 1-\frac{4}{2^k}+\frac{5/2}{2^k-2} \leq 1-\frac{1}{2^{1+k}} \quad \text{ and } \quad 1-\frac{1/2}{2^k-2} \leq 1-\frac{1}{2^{1+k}}\] 
for $k \geq 3$, and since $k \leq \theta+\nu+2$ we may conclude that 
\[ \Big|\sum_{1 \leq x \leq 2Y} e(g(x))\Big| \ll Y^{1-\frac{1}{2^{1+k}}} \ll_{\theta} Y^{1-\frac{1}{2^{\theta+\nu+3}}}. \]
\end{proof}

As we mentioned earlier, these bounds are weak compared to the bounds which may be available with current technology. With additional effort, one suspects that one may be able to use the techniques of \cite{Kuf} or \cite{Vino} to prove a bound of the shape $f(\alpha_1,\alpha_2) \ll Y^{1-\sigma(\theta)^{-1}}$ where $\sigma(\theta)$ is a quadratic polynomial in $\theta$.

\subsection{Bounds on the minor arcs}
We are now ready to estimate $H_{\pm}(R,m)$ on the minor arcs. Let $s \geq (\lfloor 2\theta \rfloor+1)(\lfloor 2\theta \rfloor+2)+1$, and let $2r$ be the largest even integer strictly smaller than $s$. By inspection, one then sees that $2r \geq (\lfloor 2\theta \rfloor+1)(\lfloor 2\theta \rfloor+2)$, and so for any $\kappa \geq 1$ we may apply Lemma \ref{lem:mean} to deduce that
\[ \int_{-1/2}^{1/2} \int_{-\kappa}^{\kappa} |f(\alpha_1,\alpha_2)|^{2r}\d\alpha_2\d\alpha_1 \ll \kappa X^{r-\theta+1/2}Y^{\epsilon} \ll \kappa X^{1-\theta} Y^{2r-1+\epsilon}. \]
Therefore, noting that $(\alpha_1,\alpha_2) \in \mathfrak{m}$ implies $|\alpha_2| \leq X^{\nu}$, one has 
\[ \int_{\mathfrak{m}} |f(\alpha_1,\alpha_2)|^{s}\d\alpha_2\d\alpha_1 \leq \sup_{(\alpha_1,\alpha_2) \in \mathfrak{m} } |f(\alpha_1,\alpha_2)|^{s-2r} \int_{\mathfrak{m}} |f(\alpha_1,\alpha_2)|^{2r}\d\alpha_2\d\alpha_1 \] 
\[\ll Y^{(s-2r)(1-2^{-(\theta+\nu+3)})} X^{\nu}X^{1-\theta}Y^{2r-1+\epsilon} \ll X^{1-\theta}Y^{s-1+\epsilon}Y^{2\nu-2^{-(\theta+\nu+3)}(s-2r)}. \]
Finally, since $s-2r \geq 1$, we know $2\nu-2^{-(\theta+\nu+3)}(s-2r) \leq 2\nu - 2^{-(\theta+\nu+3)}$, and using the definition of $\nu$ we have 
\[ 2\nu - 2^{-(\theta+\nu+3)} = 2^{-\theta-4}-2^{-\theta- 2^{-\theta-5} -3} = 2^{-\theta-3}(2^{-1}-2^{-2^{-\theta-5}}) \leq -2^{-\theta-5} = -\nu.  \]
Using the estimate $K_{\pm}(\alpha) \ll 1$, we thus have the following bound, which we collect as a lemma. 
\begin{lemma} \label{lem:minor} 
For any $s \geq (\lfloor 2\theta \rfloor+1)(\lfloor 2\theta \rfloor+2)+1$ and $\epsilon > 0$, we have 
\[ |H_{\pm}(Y,m; \mathfrak{m})| 
\ll \int_{\mathfrak{m}} |f(\alpha_1,\alpha_2)|^{s}\d\alpha_2\d\alpha_1 \ll X^{1-\theta}Y^{s-1-\nu+\epsilon}. \]
\end{lemma}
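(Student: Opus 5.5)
The plan is to combine the mean value estimate of Lemma \ref{lem:mean} with the pointwise Weyl-type bound of Lemma \ref{lem:weyl}, using Hölder's inequality in the simplest form of pulling out a supremum. First, since the kernel satisfies $K_{\pm}(\alpha_2)\ll 1$ by \eqref{eq:kerbound}, and the exponential factor $e(-m\alpha_1-\alpha_2(R-sX^{\theta}-\theta mX^{\theta-1}))$ has modulus one, the triangle inequality gives the first inequality
\[ |H_{\pm}(R,m;\mathfrak{m})| \ll \int_{\mathfrak{m}} |f(\alpha_1,\alpha_2)|^{s}\d\alpha_2\d\alpha_1. \]

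Next I let $2r$ be the largest even integer strictly less than $s$. Then $s-2r\in\{1,2\}$, and the hypothesis on $s$ gives $r\ge (\lfloor 2\theta\rfloor+1)(\lfloor 2\theta\rfloor+2)/2$, so Lemma \ref{lem:mean} applies with exponent $2r$. Since $\mathfrak{m}\subseteq[-1/2,1/2)\times[-X^{\nu},X^{\nu}]$, I apply that lemma with $\kappa=X^{\nu}\ge 1$. Because $Y\asymp X^{1/2}$, this yields
\[ \int_{\mathfrak{m}}|f|^{2r} \ll X^{\nu}X^{r-\theta+1/2}Y^{\epsilon}\asymp X^{\nu}X^{1-\theta}Y^{2r-1+\epsilon}. \]
For the remaining $s-2r$ factors I bound $|f|^{s-2r}$ by its supremum over $\mathfrak{m}$, which Lemma \ref{lem:weyl} controls by $Y^{(s-2r)(1-2^{-(\theta+\nu+3)})}$.

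Multiplying the two bounds and writing $X^{\nu}\asymp Y^{2\nu}$ gives
\[ X^{1-\theta}Y^{s-1+\epsilon}\,Y^{2\nu-(s-2r)2^{-(\theta+\nu+3)}}. \]
The last step, which is the only delicate point, is checking that the exponent $2\nu-(s-2r)2^{-(\theta+\nu+3)}$ is at most $-\nu$. Since $s-2r\ge1$, it suffices to show $3\nu\le 2^{-\theta-\nu-3}$ with $\nu=2^{-\theta-5}$. This reduces to $3\cdot 2^{-2}\le 2^{-\nu}$, which holds because $\nu<1/8$ for $\theta>2$. Thus the choice of $\nu$ in the definition of the arcs is precisely what lets the Weyl saving beat the loss $X^{\nu}$ from the length of the $\alpha_2$-range. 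Absorbing constants then gives $X^{1-\theta}Y^{s-1-\nu+\epsilon}$, as claimed.
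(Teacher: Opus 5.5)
Your proposal is correct and follows the paper's proof essentially step for step. It uses the same $2r$ (the largest even integer below $s$), Lemma \ref{lem:mean} with $\kappa = X^{\nu}$, the Lemma \ref{lem:weyl} supremum on the remaining $s-2r$ factors, and the same final exponent check, which in both versions reduces to $2^{-\nu}\geq 3/4$.
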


\section{Trivial Arc Bounds} \label{sec:triv}
We now bound integral over the trivial arcs, the definition of which we recall as 
\[ \mathfrak{t} = \{ (\alpha_1, \alpha_2) \in [-1/2,1/2] \times \mathbb{R}: |\alpha_2| \geq X^{\nu} \} .\] 
The main ingredient is the bound on the kernel function $K_{\pm}$ afforded to us by \eqref{eq:kerbound}. By the change of variables $u_1 = -\alpha_1$ and $u_2 = -\alpha_2$, noting that $K_{\pm}$ is even and the integrand is nonnegative, we use \eqref{eq:kerbound} to deduce 
\[ \int_{\mathfrak{t}} |f(\alpha_1,\alpha_2)|^s|K_{\pm}(\alpha_2)| \d\alpha_2\d\alpha_1 =  2\int_{-1/2}^{1/2}\int_{ X^{\nu}}^{\infty} |f(\alpha_1,\alpha_2)|^s|K_{\pm}(\alpha_2)|\d\alpha_2\d\alpha_1 \]
\[ \ll \sum_{j = \lfloor{\nu \log_2 X \rfloor} }^{\infty} \frac{\log(X)}{2^{2j}}\int_{-1/2}^{1/2}\int_{2^j}^{2^{j+1}} |f(\alpha_1,\alpha_2)|^s \d\alpha_2\d\alpha_1 . \] 
Define $I_j$ as the integral inside the sum. Now suppose $s \geq s_0 = (\lfloor 2\theta \rfloor+1)(\lfloor 2\theta \rfloor+2)$. Note that $s_0$ is even and $2^{j+1} \geq 1$ for $j \geq \lfloor \nu \log_2 X \rfloor$. Therefore, we may use the trivial bound $|f(\alpha_1,\alpha_2)| \ll Y$ and Lemma \ref{lem:mean} to see that 
\[ I_j \ll Y^{s-s_0}\int_{-1/2}^{1/2} \int_{2^j}^{2^{j+1}} |f(\alpha_1,\alpha_2)|^{s_0}\d\alpha_2\d\alpha_1 \ll Y^{s-s_0+\epsilon}2^{j+1}X^{s_0/2-\theta+1/2} \ll 2^{j+1}X^{1-\theta} Y^{s-1+\epsilon}.  \]
Inserting this bound into the above, one has 
\[ \sum_{j = \lfloor{\nu \log_2 X \rfloor} }^{\infty} \frac{\log(X)}{2^{2j}} I_j \ll X^{1-\theta}Y^{s-1+\epsilon}\sum_{j=\lfloor{\nu \log_2 X \rfloor}}^{\infty} 2^{-j} \ll X^{1-\theta}Y^{s-1+\epsilon}X^{-\nu}. \]
Therefore, we have the following. 
\begin{lemma} \label{lem:trivial} 
For any $s \geq (\lfloor 2\theta \rfloor+1)(\lfloor 2\theta \rfloor+2)$ and $\epsilon > 0$, one has 
\[ H_{\pm}(Y,m; \mathfrak{t}) \ll \int_{\mathfrak{t}} |f(\alpha_1,\alpha_2)|^{s}|K_{\pm}(\alpha_2)|\d\alpha_2\d\alpha_1 \ll X^{1-\theta}Y^{s-1-2\nu+\epsilon}. \]
\end{lemma}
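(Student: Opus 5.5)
The plan is to follow the dyadic decomposition already sketched above the statement. First, by the triangle inequality, the unimodularity of the factor $e\big(-m\alpha_1-\alpha_2(R-sX^{\theta}-\theta mX^{\theta-1})\big)$, and $|f(\alpha_1,\alpha_2)^s| = |f(\alpha_1,\alpha_2)|^s$, we get
\[ |H_{\pm}(R,m;\mathfrak{t})| \leq \int_{\mathfrak{t}} |f(\alpha_1,\alpha_2)|^s|K_{\pm}(\alpha_2)|\d\alpha_2\d\alpha_1. \]
This is the first inequality in the statement.

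For the second inequality, first reduce to positive $\alpha_2$. The substitution $(\alpha_1,\alpha_2)\mapsto(-\alpha_1,-\alpha_2)$ sends $f$ to $\overline{f}$, and $K_{\pm}$ is even. Hence it suffices to bound twice the integral over $\alpha_1\in[-1/2,1/2]$ and $\alpha_2 \geq X^{\nu}$. Cover $[X^{\nu},\infty)$ by the dyadic intervals $[2^j,2^{j+1}]$ with $j \geq j_0 := \lfloor \nu\log_2 X\rfloor$. On each such interval, \eqref{eq:kerbound} gives $K_{\pm}(\alpha_2) \ll \log(X)\,2^{-2j}$. This reduces matters to bounding
\[ I_j = \int_{-1/2}^{1/2}\int_{2^j}^{2^{j+1}} |f(\alpha_1,\alpha_2)|^s\d\alpha_2\d\alpha_1 . \]

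To bound $I_j$, let $s_0 = (\lfloor 2\theta\rfloor+1)(\lfloor 2\theta\rfloor+2)$. This is even, and $s_0/2$ meets the hypothesis of Lemma \ref{lem:mean}. Remove $s-s_0$ copies of $f$ using the trivial bound $|f|\ll Y$, and enlarge the range of $\alpha_2$ to $[-\kappa,\kappa]$ with $\kappa = 2^{j+1}\geq 1$; the integrand is nonnegative, so this only increases the integral. Lemma \ref{lem:mean}, applied with $s_0/2$ in place of $s$, then gives
\[ I_j \ll Y^{s-s_0}\,2^{j+1}X^{s_0/2-\theta+1/2}Y^{\epsilon}. \]
Since $Y\asymp X^{1/2}$, we have $X^{s_0/2} \asymp X^{1/2}Y^{s_0-1}$. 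So $I_j \ll 2^{j}X^{1-\theta}Y^{s-1+\epsilon}$, with an implied constant depending on $c$.

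Summing over $j$ gives
\[ \log(X)\,X^{1-\theta}Y^{s-1+\epsilon}\sum_{j\geq j_0}2^{-j} \ll \log(X)\,X^{1-\theta}Y^{s-1+\epsilon}X^{-\nu}, \]
where we used $2^{-j_0}\ll X^{-\nu}$. Finally, $X^{-\nu}\asymp Y^{-2\nu}$, and the factor $\log X$ is absorbed by increasing $\epsilon$ slightly, which is permissible because $\epsilon>0$ is arbitrary. This yields $X^{1-\theta}Y^{s-1-2\nu+\epsilon}$.

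There is no genuine obstacle here. The only points needing care are the convergence of the dyadic sum, which is exactly why the $|\alpha|^{-2}$ decay in \eqref{eq:kerbound} is required (the $|\alpha|^{-1}$ bound alone would not make the sum converge), and checking that $s_0/2$ satisfies the lower bound required in Lemma \ref{lem:mean}.
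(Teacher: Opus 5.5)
Your proposal is correct and follows the paper's proof essentially step for step. Both arguments use the symmetry reduction to $\alpha_2 \geq X^{\nu}$, a dyadic decomposition with $K_{\pm}(\alpha_2) \ll \log(X)2^{-2j}$ from \eqref{eq:kerbound}, the trivial bound $|f| \ll Y$ to pass to the even moment $s_0$, Lemma \ref{lem:mean} with $\kappa = 2^{j+1}$, and absorption of the resulting $\log X$ factor into $Y^{\epsilon}$ (a step you make explicit and the paper leaves implicit).
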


\section{Major Arcs} \label{sec:maj}
Finally, we prove non-trivial lower bounds on the major arcs. We note that this section only requires $s \geq \max\{ 2\theta, 8\}$. We being with a general lemma on integral approximations to sums. 

\begin{lemma} \label{lem:poisson}
Let $F: [X, Y] \to \mathbb{R} $ be a continuous, twice differentiable function with $F'$ monotonic on $[X,Y]$. Suppose that $H_1$ and $H_2$ are integers such that $H_1 \leq F'(x) \leq H_2$ for $x \in [X,Y]$. Then one has 
\[ \sum_{A < x \leq B} e(F(x)) = \sum_{h = H_1}^{H_2} \int_A^B e(F(\beta)-\beta h)\d\beta + O(\log(2+H)), \]
where $H = \max\{ |H_1|, |H_2|\}$. 
\end{lemma}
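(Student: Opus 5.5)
This is the classical truncated Poisson summation (van der Corput's Lemma 4.2 / Titchmarsh Lemma 4.10 type). Here I read $[X,Y]$ in the hypotheses as $[A,B]$. The plan is to prove it from the Kusmin--Landau first-derivative estimate together with the Fourier expansion of the sawtooth function, in the standard way.

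\emph{Step 1: Euler--Maclaurin.} Write $\psi(x) = x - \lfloor x\rfloor - 1/2$. Since $F$ is $C^1$, partial summation gives
\[ \sum_{A<x\le B} e(F(x)) = \int_A^B e(F(\beta))\d\beta + \int_A^B e(F(\beta))\d(-\psi(\beta)) + O(1), \]
where the $O(1)$ absorbs the boundary terms.

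\emph{Step 2: Fourier expansion of $\psi$.} Use the truncated expansion
\[ \psi(\beta) = -\sum_{0<|h|\le N} \frac{e(h\beta)}{2\pi i h} + O\Big(\min\Big\{1, \frac{1}{N\|\beta\|}\Big\}\Big), \]
and take $N$ large, say $N \to \infty$ in the limit, because the error can be handled after integrating by parts. Formally this turns the second integral into $\sum_{h\neq 0}\int_A^B e(F(\beta)-h\beta)\d\beta$. The boundedly convergent partial sums of the series for $\psi$ justify swapping sum and integral after one integration by parts. So
\[ \sum_{A<x\le B} e(F(x)) = \sum_{h\in\mathbb Z}\int_A^B e(F(\beta)-h\beta)\d\beta + O(1), \]
with the series interpreted as a symmetric limit.

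\emph{Step 3: discard $h\notin[H_1,H_2]$.} For $h>H_2$, the phase $F(\beta)-h\beta$ has derivative $F'(\beta)-h \le H_2-h<0$, and this derivative is monotone. The first-derivative (Kusmin--Landau) integral bound therefore gives $\int_A^B e(F(\beta)-h\beta)\d\beta \ll 1/(h-H_2)$. This alone gives a harmonic sum that diverges, so I pair the terms $h$ and $-h$ as in the symmetric limit of Step 2. Equivalently, integrate by parts once more:
\[ \int_A^B e(F-h\beta)\d\beta = \Big[\frac{e(F-h\beta)}{2\pi i(F'-h)}\Big]_A^B - \int_A^B e(F-h\beta)\,\d\Big(\frac{1}{2\pi i(F'-h)}\Big). \]
The boundary terms, summed over $h$, recombine into the Fourier series of $\psi$ evaluated at $A$ and $B$, which is bounded. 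The remaining integral is bounded by the total variation of $1/(F'-h)$, which by monotonicity is at most $|1/(F'(B)-h)-1/(F'(A)-h)|$. Summing this over $h>H_2+1$ telescopes in the differences and gives $O(1)$. The single terms $h=H_2+1$ and $h=H_1-1$ are $O(1)$ by the trivial bound $\int \ll B-A$. That trivial bound is not small in general, however, so I treat these terms with the second mean value theorem instead. Where $|F'-h|$ can be arbitrarily small near the endpoint of the range, I use the bound $\min(B-A, 1/\operatorname{dist})$ and split.

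\emph{Step 4: where the $\log$ comes from.} The $O(\log(2+H))$ term arises because the Fourier truncation must be taken at $N \asymp H$ rather than $N=\infty$ to control the approximation error uniformly. Terms with $|h|\le 2H$ outside $[H_1,H_2]$ each contribute $\ll 1/\operatorname{dist}(h,[H_1,H_2])$, which sums to $\ll \log(2+H)$. Terms with $|h|>2H$ satisfy $|F'-h|\asymp |h|$ and are handled by the telescoping of Step 3, contributing $O(1)$.

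\textbf{Main obstacle.} The hardest step is the uniform control of the tail terms, where a naive first-derivative bound gives a divergent sum. I would follow Titchmarsh's Lemma 4.7 closely: apply the second mean value theorem to $\sum_h e(-h\beta)/(F'(\beta)-h)$ grouped as a cotangent-like kernel, using the monotonicity of $F'$.
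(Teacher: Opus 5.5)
Your proposal is correct and follows the standard argument (essentially Titchmarsh's Lemma~4.7) for the result that the paper does not prove but simply quotes as Lemma~4.2 of Vaughan: Euler--Maclaurin, the boundedly convergent symmetric Fourier series of $\psi$, the first-derivative bound $\ll 1/\operatorname{dist}(h,[H_1,H_2])$ for $h\notin[H_1,H_2]$ with $|h|\le 2H$ (the sole source of the $\log(2+H)$), and one integration by parts for $|h|>2H$, where $1/(F'-h)=-1/h+O(H/h^2)$ turns the boundary terms into a tail of the series for $\psi$ and makes the variation terms $\ll\sum_{|h|>2H}H/h^2\ll 1$. You should delete the Step~3 claims made for \emph{all} $h\notin[H_1,H_2]$, and also the remark about truncating at $N\asymp H$: over the full tail those sums are only $O(\log(2+H))$, not $O(1)$ (for $F\equiv 0$, $H_1=0$, $H_2=H$, $A=0$, $B=N+\tfrac12$ the tail equals $-\sum_{1\le h\le H,\ h\ \mathrm{odd}}(\pi i h)^{-1}$), and the terms $h=H_2+1$, $h=H_1-1$ need no special care since $|F'-h|\ge 1$ there.
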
 

\begin{proof} See Lemma 4.2 in the book by Vaughan \cite{Vbook}
\end{proof}

Let $g(x) = \alpha_1 x +\alpha_2\big((X+x)^{\theta}-X^{\theta}-\theta xX^{\theta-1} \big) $. Using Lemma \ref{lem:poisson}, we have the following approximation result on the major arcs.

\begin{lemma} \label{lem:approx}
Let $|\alpha_2| \leq 2(5\theta(\theta-1))^{-1} Y^{-1}(X+Y)^{-\theta+2}$. One has 
\[ f(\alpha_1,\alpha_2)= \sum_{X-Y <  x \leq X+Y} e(g(x-X)) = \int_{-Y}^{Y} e(g(t))\d t + O(1). \]
\end{lemma}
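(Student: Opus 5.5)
The plan is to apply Lemma \ref{lem:poisson} to $F(t) = g(t)$ on $[-Y,Y]$ after the change of variables $x = X+t$. The first equality is simply the definition of $f$ rewritten in terms of $g$. Indeed, $(x-X)\alpha_1+\alpha_2(x^\theta - X^\theta - \theta(x-X)X^{\theta-1}) = g(x-X)$, so
\[ f(\alpha_1,\alpha_2)=\sum_{-Y<t\le Y,\ t\in \mathbb{Z}-X} e(g(t)). \]
Since $X$ need not be an integer, I would apply Lemma \ref{lem:poisson} to $F(\beta)=g(\beta - X)$ on $(X-Y,X+Y]$, summing over integers $x$. Afterwards I substitute $t=\beta-X$ in the resulting integrals.

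The key step is to control $F'$. We have $F'(\beta) = \alpha_1 + \alpha_2\theta\big(\beta^{\theta-1}-X^{\theta-1}\big)$, and $F''(\beta)=\alpha_2\theta(\theta-1)\beta^{\theta-2}$ has constant sign, so $F'$ is monotone. By the mean value theorem, for $|\beta - X|\le Y$,
\[ |\alpha_2|\theta|\beta^{\theta-1}-X^{\theta-1}|\le |\alpha_2|\theta(\theta-1)(X+Y)^{\theta-2}Y\le \tfrac{2}{5}. \]
This uses exactly the major arc condition on $\alpha_2$. Since $|\alpha_1|\le 1/2$, we get $F'(\beta)\in[\alpha_1-2/5,\alpha_1+2/5]\subset(-9/10,9/10)$. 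Hence the only integers that can lie in the range of $F'$ are $h\in\{-1,0,1\}$, and we may take $H_1=-1$, $H_2=1$. Lemma \ref{lem:poisson} then gives
\[ f=\sum_{h=-1}^{1}\int_{X-Y}^{X+Y} e(F(\beta)-h\beta)\d\beta+O(1), \]
where the error term is $O(\log 3)=O(1)$.

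The main obstacle is disposing of the terms with $h=\pm1$, and of $h=0$ when $\alpha_1$ is near $\pm 1/2$. For the $h=\pm1$ terms, the phase $F(\beta)-h\beta$ has derivative $F'(\beta)-h$. If $|\alpha_1|$ is bounded away from $1/2$, say $|F'-h|\ge 1/10$, then the first derivative test (Lemma 4.2 of \cite{Vbook}-type, or integration by parts using monotonicity of $F'$) bounds each such integral by $O(1)$. To handle all $\alpha_1\in[-1/2,1/2]$ uniformly, I would instead invoke Lemma \ref{lem:poisson} with the tight choice of $H_1,H_2$, namely the integers actually in the range of $F'$. Since the range has length at most $4/5<1$, it contains at most one integer. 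If it contains $0$, the sum is $\int e(F)+O(1)$. If it contains $\pm1$ but not $0$, then $|F'|\ge 1/10$ on the whole interval: the range has length under $1$, contains $\pm 1$, and lies within $(-9/10,9/10)$, which forces it to lie away from $0$. In that case $\int e(F)=O(1)$ by the first derivative test, so adding it costs only $O(1)$. If it contains no integer, Lemma \ref{lem:poisson} with an empty $h$-sum, or Kusmin–Landau directly, gives $f=O(1)$. In this case $\int e(F)$ is also $O(1)$, since the integer-free range lies in an interval $(n,n+1)$ and $|F'|$ is bounded below whenever $0$ is not in the range. The remaining subcase, where $0$ is in the range but no integer lies within distance $1/10$, is absorbed into the first case. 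In every case we obtain $f=\int_{X-Y}^{X+Y}e(F(\beta))\d\beta+O(1)$, and substituting $t=\beta-X$ yields $\int_{-Y}^{Y}e(g(t))\d t+O(1)$.
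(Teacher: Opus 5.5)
Your opening route (Lemma~\ref{lem:poisson} with $H_1=-1$, $H_2=1$, then the first-derivative test on the $h=\pm1$ integrals) is exactly the paper's proof, and it is already complete. The ``main obstacle'' you then describe does not exist. You showed yourself that $F'(\beta)\in[\alpha_1-2/5,\alpha_1+2/5]\subseteq[-9/10,9/10]$ for every $\alpha_1\in[-1/2,1/2]$. Hence $|F'(\beta)-h|\ge 1/10$ for $h=\pm1$ uniformly in $\alpha_1$, including at $\alpha_1=\pm1/2$. Since $F''$ has constant sign, integrating by parts (as the paper does) bounds those two integrals by $O(1)$. The $h=0$ integral is the main term and never needs to be disposed of. Note also that the range of $F'$ can only ever contain the integer $0$.

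The fallback case analysis you say you would use instead should be deleted, because as written it is wrong in several places.
\begin{itemize}
\item Lemma~\ref{lem:poisson} requires integers with $H_1\le F'\le H_2$ on the whole interval. So neither the ``tight'' choice $H_1=H_2=0$ (unless $F'$ is constant) nor an empty $h$-sum is admissible.
\item The case ``the range contains $\pm1$'' is vacuous, since the range lies in $[-9/10,9/10]$.
\item The claim that $f=O(1)$ and $\int e(F)=O(1)$ whenever the range of $F'$ contains no integer is false. Take $\alpha_2=0$ and $\alpha_1=1/(4Y)$, a point of $\mathfrak{M}$. The range of $F'$ is $\{1/(4Y)\}$, yet $|f|\asymp Y$ and $\int_{-Y}^{Y}e(\alpha_1 t)\,\mathrm{d}t=4Y/\pi$. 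Kusmin--Landau only gives $\ll\|F'\|^{-1}$, which is not uniform.
\end{itemize}
Only the \emph{difference} of $f$ and the integral is $O(1)$. To see that, you must keep $h=0$ in the Poisson sum even when $0$ lies outside the range of $F'$, and then bound the neighbouring $h=\pm1$ term using $|F'\mp1|\ge1/10$. That is precisely the $H_1=-1$, $H_2=1$ argument you started with, so your ``absorbed into the first case'' patch is circular.
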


\begin{proof} The first equality follows by definition. We see $g'(x-X) = \alpha_1 +\theta\alpha_2\big(x^{\theta-1}- X^{\theta-1}\big) $, which is clearly a monotonic function of $x$ (one may easily check this by taking the second derivative). Furthermore, by the Mean Value Theorem one has 
\begin{align*} |g'(x-X)| \leq |\alpha_1| + |\alpha_2|\theta\big(x^{\theta-1}-X^{\theta-1}\big) & \leq 1/2+\theta(\theta-1)|\alpha_2||x-X|(X+Y)^{\theta-2} \\ &
\leq 1/2 + \theta(\theta-1)|\alpha_2|Y(X+Y)^{\theta-2}. 
\end{align*}
By our assumption on $|\alpha_2|$ we have for all $X-Y \leq x \leq X+Y$ that 
\[ |g'(x)| \leq \frac{1}{2}+\frac{2}{5} = \frac{9}{10}. \]
Therefore, we may take $H_1 = -1$ and $H_2 = 1$. 
Applying Lemma \ref{lem:poisson} and changing variables one has 
\[ \sum_{-Y < x \leq Y} e(g(x-X)) - \int_{-Y}^Y e(g(t))\d t = \sum_{\substack{h = -1 \\ h \neq 0}}^1 e(-hX) \int_{-Y}^{Y} e(g(t)-th)\d t + O(1). \]
We also see that for $|h| \geq 1$, one has for all $-Y \leq t \leq Y$ that 
\[ |g'(t)-h| \geq |h| - |g'(t)| \geq |h|-9/10 \geq |h|/10 \geq 1/10. \]
Define the functions  $v(t) = 2\pi i(g'(t)-h)$ and $u(t) =v(t)^{-1}e(g(t)-th)$. Since $g''(t)$ never changes sign, we integrate by parts for $h = \pm 1$ using $u(t)$ and $v(t)$ to see that 
\begin{align*} \int_{-Y}^Y e(g(t)-th)\d t = \left[ \frac{e(g(t)-th)}{2\pi i (g'(t)-h)} \right]_{-Y}^Y +\int_{-Y}^Y \frac{g''(t)}{(g'(t)-h)^2}\frac{e(g(t)-th)}{2\pi i}\d t \\
\ll \max \{ |g'(-Y)-h|^{-1}, |g'(Y)-h|^{-1} \} \ll 1. 
\end{align*}
Substituting this estimate into the above derives the needed estimate. 
\end{proof}

Our next objective is to substitute $f(\alpha_1,\alpha_2)$ with a suitable integral approximation (as suggested in the previous lemma) on the major arcs. First, we need an estimate on $f^*(\alpha_1, \alpha_2)$, the major arc approximation defined as 
\[ f^*(\alpha_1, \alpha_2) = \int_{-Y}^Y e\big(\alpha_1t+\alpha_2( (X+t)^{\theta}-X^{\theta}-\theta t X^{\theta-1} )\big)\d t = \int_{-Y}^Y e(g(t))\d t. \]
We will use Van der Corput's Stationary Phase Lemma, which we state now. 
\begin{lemma} \label{lem:VDCstat} 
Let $\lambda > 0$. Suppose that $\phi: (a,b) \to \mathbb{R}$ is a smooth function in $(a,b)$ and suppose $|\phi^{(k)}(x)| \geq 1$ for all $x \in (a,b)$. Then 
\[ \Big|\int_a^b e^{i\lambda \phi(t)} \d t \Big| \leq c_k\lambda^{-1/k} \]
as long as either $k \geq 2$, or if $k = 1$ and $\phi'(t)$ is monotonic. Moreover, the constant $c_k$ is independent of $\phi$ and $\lambda$. 
\end{lemma}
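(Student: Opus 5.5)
We prove the statement by induction on $k$, following the classical argument (see e.g.\ Stein's \emph{Harmonic Analysis}, Chapter VIII). The base case $k=1$ uses integration by parts. The inductive step splits $(a,b)$ into a short interval around the point where $|\phi^{(k)}|$ is smallest, plus at most two intervals on which $|\phi^{(k)}|$ is bounded below. Throughout, we may assume $\lambda$ is large, since otherwise the trivial bound $b-a$ is not what we want. We should note, however, that the inequality must be uniform in $(a,b)$. The argument below never uses the length $b-a$ except on the short middle interval, so this is not an issue.

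For $k=1$, write
\[ e^{i\lambda\phi(t)} = \frac{1}{i\lambda\phi'(t)}\frac{\mathrm{d}}{\mathrm{d}t}e^{i\lambda\phi(t)}, \]
which is legitimate since $|\phi'|\geq 1$, and integrate by parts over $(a,b)$. The boundary terms contribute at most $2\lambda^{-1}$, because $|1/\phi'|\leq 1$. The remaining integral is bounded by
\[ \lambda^{-1}\int_a^b \Big|\frac{\mathrm{d}}{\mathrm{d}t}\frac{1}{\phi'(t)}\Big|\,\mathrm{d}t. \]
Since $\phi'$ is monotonic and does not vanish, $1/\phi'$ is monotonic. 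The integral of the absolute value of its derivative therefore telescopes to
\[ |1/\phi'(b)-1/\phi'(a)|\leq 1. \]
Hence we may take $c_1=3$. If $\phi$ is only smooth on the open interval, we apply this on $[a+\eta,b-\eta]$ and let $\eta\to 0$.

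For the inductive step, assume the result for some $k\geq1$ and suppose $|\phi^{(k+1)}|\geq 1$ on $(a,b)$. By continuity $\phi^{(k+1)}$ has constant sign, so $\phi^{(k)}$ is strictly monotonic, with $|\phi^{(k)}(t)-\phi^{(k)}(t')|\geq|t-t'|$. Let $c$ be a point of $[a,b]$ where $|\phi^{(k)}|$ is minimised; this is the unique zero of $\phi^{(k)}$ if one exists, and otherwise an endpoint. Fix $\delta>0$. On $(a,b)\setminus(c-\delta,c+\delta)$, which is a union of at most two intervals, we have $|\phi^{(k)}|\geq\delta$ by the Mean Value Theorem. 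On each such interval, apply the inductive hypothesis to $\tilde\phi=\phi/\delta$ with parameter $\lambda\delta$. This gives a contribution at most $c_k(\lambda\delta)^{-1/k}$ from each interval.

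The monotonicity hypothesis required in the case $k=1$ is automatic here: $\phi''$ has constant sign, so $\phi'$ is monotonic on each subinterval. This is the one point where some care is needed, and it is why the hypothesis ``$\phi'$ monotonic'' is only required when $k=1$.

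The middle interval contributes trivially at most $2\delta$. Choosing $\delta=\lambda^{-1/(k+1)}$ gives
\[ (\lambda\delta)^{-1/k} = \big(\lambda^{k/(k+1)}\big)^{-1/k} = \lambda^{-1/(k+1)}. \]
So the total is at most $(2c_k+2)\lambda^{-1/(k+1)}$, and we set $c_{k+1}=2c_k+2$. The constants are manifestly independent of $\phi$, $\lambda$, $a$ and $b$, which completes the induction.
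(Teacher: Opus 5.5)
Your argument is correct and is exactly the classical proof of Proposition 2 in Chapter VIII of Stein's \emph{Harmonic Analysis}: integration by parts with $c_1=3$, then the induction that excises $(c-\delta,c+\delta)$ with $\delta=\lambda^{-1/(k+1)}$ to get $c_{k+1}=2c_k+2$. That is all the paper relies on, since it proves this lemma only by citing Stein (and van der Corput). The only blemish is your opening remark about assuming $\lambda$ large: it is not justified as stated and not needed, because your induction works verbatim for every $\lambda>0$.
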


\begin{proof} See Proposition 2 on page 332 of \cite{Stein}. See also \cite{VDC}. 
\end{proof}

\begin{lemma} \label{lem:apprest}
For any $\beta_1, \beta_2 \in \mathbb{R}$, we have 
\[ f^*(\beta_1, \beta_2) \ll Y(1+|\beta_1|Y+|\beta_2|X^{\theta-2}Y^2)^{-1/2}, \]
where the implicit constants are independent of $\beta_i$, $X$ and $Y$.
\end{lemma}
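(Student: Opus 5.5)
We bound $f^*(\beta_1,\beta_2)=\int_{-Y}^{Y}e(\phi(t))\d t$ with phase $\phi(t)=\beta_1t+\beta_2h(t)$, where $h(t)=(X+t)^{\theta}-X^{\theta}-\theta tX^{\theta-1}$. The plan is to combine three bounds: the trivial bound, a second-derivative bound from Lemma \ref{lem:VDCstat}, and a first-derivative bound from the same lemma. Write $A=|\beta_1|Y$ and $B=|\beta_2|X^{\theta-2}Y^2$. Since
\[ \min\{Y,\,YA^{-1/2},\,YB^{-1/2}\}\asymp Y(1+A+B)^{-1/2}, \]
it suffices to show that $f^*$ is $\ll Y$ always, $\ll YB^{-1/2}$ always, and $\ll YA^{-1/2}$ in the one regime where the $B$-bound does not already imply it. The trivial bound $|f^*|\leq 2Y$ is immediate. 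Throughout we use that $Y\asymp\sqrt{X}$, so $|t|\leq Y\leq X/2$ for large $X$. Hence $(X+t)^{\theta-2}\asymp_\theta X^{\theta-2}$ uniformly on $[-Y,Y]$, which makes every constant below depend only on $\theta$.

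\emph{Second-derivative bound.} We have $\phi''(t)=\beta_2\theta(\theta-1)(X+t)^{\theta-2}$, which does not depend on $\beta_1$. Assume $\beta_2\neq0$ and put $\lambda=2\pi|\beta_2|\theta(\theta-1)\min_{|t|\leq Y}(X+t)^{\theta-2}\asymp|\beta_2|X^{\theta-2}$. Write $e(\phi)=e^{i\lambda\tilde\phi}$ with $\tilde\phi=2\pi\phi/\lambda$, so that $|\tilde\phi''|\geq1$ on $(-Y,Y)$. Lemma \ref{lem:VDCstat} with $k=2$ then gives
\[ |f^*|\ll\lambda^{-1/2}\asymp(|\beta_2|X^{\theta-2})^{-1/2}=YB^{-1/2}, \]
with a constant independent of $\beta_1,\beta_2,X,Y$.

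\emph{First-derivative bound.} We have $\phi'(t)=\beta_1+\beta_2h'(t)$ with $h'(t)=\theta((X+t)^{\theta-1}-X^{\theta-1})$. By the mean value theorem, $|h'(t)|\leq C_\theta X^{\theta-2}Y$ on $[-Y,Y]$. Also, $\phi'$ is monotonic because $\phi''$ has constant sign. Split into two cases.
\begin{enumerate}[(i)]
\item If $|\beta_1|\geq 2C_\theta|\beta_2|X^{\theta-2}Y$, then $|\phi'|\geq|\beta_1|/2$ throughout. Applying Lemma \ref{lem:VDCstat} with $k=1$ and $\lambda=\pi|\beta_1|$ gives $|f^*|\ll|\beta_1|^{-1}$. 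Combining with the trivial bound, $|f^*|\ll\min\{Y,|\beta_1|^{-1}\}\leq Y(1+A)^{-1}\leq Y(1+A)^{-1/2}$.
\item Otherwise $A=|\beta_1|Y<2C_\theta B$. Then $YB^{-1/2}\ll YA^{-1/2}$, so the second-derivative bound already controls the $A$-term.
\end{enumerate}
Taking the minimum over all available bounds in each case gives $f^*\ll Y\min\{1,A^{-1/2},B^{-1/2}\}\asymp Y(1+A+B)^{-1/2}$, as claimed. The degenerate cases $\beta_2=0$ or $\beta_1=0$ are covered by the same argument, by explicit integration, or trivially.

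The only delicate point is uniformity. The constants $c_k$ in Lemma \ref{lem:VDCstat} are absolute, so after the rescaling the implied constants depend only on $\theta$, through $\theta(\theta-1)$, $C_\theta$, and the comparison $(X+t)^{\theta-2}\asymp X^{\theta-2}$. This is consistent with the statement's requirement that the constants be independent of $\beta_i$, $X$ and $Y$.
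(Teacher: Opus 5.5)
Your proposal is correct and follows essentially the same route as the paper. Both arguments use the trivial bound, van der Corput's lemma with $k=2$ (via the constant-sign $\phi''\asymp_\theta \beta_2X^{\theta-2}$), and van der Corput's lemma with $k=1$ when $|\beta_1|\geq 2C_\theta|\beta_2|X^{\theta-2}Y$, with the second-derivative bound covering the other regime. The differences are only organizational: the paper rescales to $[-1,1]$ and splits cases by comparing $|\beta_1|Y$ with $|\beta_2|X^{\theta-2}Y^2$, whereas you stay on $[-Y,Y]$ and take the minimum of the available bounds directly.
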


\begin{proof} 
We see 
\[ f^*(\beta_1, \beta_2) = Y \int_{-1}^1 e\big(\beta_1Yt+\beta_2((X+Yt)^{\theta}-X^{\theta}-\theta YtX^{\theta-1} )\big)\d t. \]
Let $I_1$ be the integral on the right hand side. We show that $I_1 \ll (1+|\beta_1|Y+|\beta_2|X^{\theta-2}Y^2)^{-1/2}$, which will complete the proof. We assume 
\[|\beta_1|Y+|\beta_2|Y^{2}X^{\theta-2} \geq 1,\] 
since otherwise the needed estimate is trivial. We consider two cases. Suppose first that $|\beta_2|X^{\theta-2}Y^2 \geq |\beta_1|Y$. In this case, letting $\phi(t)$ be the phase in $I_1$, we may notice that 
\[ \phi''(t) = \theta(\theta-1)\beta_2Y^2(X+Yt)^{\theta-2}, \]
and therefore for $-1 \leq t \leq 1$ we have 
\[ |\phi''(t)| \geq \theta(\theta-1)|\beta_2|Y^2(X-Y)^{\theta-2} \geq C_{\theta}|\beta_2|Y^2(X+Y)^{\theta-2} \]
for some constant $C_\theta$. Then using Lemma \ref{lem:VDCstat} with $k = 2$ and $\lambda = C_{\theta}|\beta_2|Y^2(X+Y)^{\theta-2}$ and the function $\phi_0(t) = \lambda^{-1}\phi(t)$, we see that 
\[ \int_{-1}^1 e(\phi(t))\d t = \int_{-1}^1 e(\lambda\phi_0(t)) \d t \ll |\lambda|^{-1/2} \ll (|\beta_2|Y^2(X+Y)^{\theta-2})^{-1/2}.  \]
Then, since $1 \leq |\beta_1|Y+|\beta_2|Y^{2}X^{\theta-2} \leq 2|\beta_2|Y^{2}X^{\theta-2}$, 
we have 
\[ 1+|\beta_1|Y+|\beta_2|Y^2(X+Y)^{\theta-2} \leq 4|\beta_2|Y^2(X+Y)^{\theta-2}, \] 
from which we have the needed estimate $I_1 \ll (1+|\beta_1|Y+|\beta_2|Y^2X^{\theta-2})^{-1/2}$ by rearranging and recalling that $X+Y \asymp X$. 
In the second case, assume $|\beta_1|Y \geq |\beta_2|X^{\theta-2}Y^2$. We have by the Mean Value Theorem that 
\[ |\phi'(t)| = |\beta_1Y+\theta\beta_2Y((X+Yt)^{\theta-1}-X^{\theta-1})| \geq |\beta_1|Y-\theta(\theta-1)|\beta_2|Y(X+Y)^{\theta-2}. \]
First, suppose also that $|\beta_1|Y \geq 2\theta(\theta-1)|\beta_2|Y(X+Y)^{\theta-2}$. Then we have 
\[ |\phi'(t)| \geq |\beta_1|Y/2, \]
and so letting $\lambda = |\beta_1|Y/2$ and $\phi_0(t) = \lambda^{-1}\phi(t)$, we use Lemma \ref{lem:VDCstat} with $k = 1$ to note that
\[ \int_{-1}^1 e(\phi(t)\d t \ll (|\beta_1|Y)^{-1}. \]
Then since $1 \leq |\beta_1|Y+|\beta_2|Y^2(X+Y)^{\theta-2} \leq 2|\beta_1|Y$, we have 
\[ (1+|\beta_1|Y+|\beta_2|Y^2(X+Y)^{\theta-2})^{1/2} \leq (4|\beta_1|Y)^{1/2} \leq 4|\beta_1|Y, \]
and again the needed estimate on $I_1$ follows. 

If $|\beta_1|Y \leq 2\theta(\theta-1)|\beta_2|X^{\theta-2}Y^2$ instead, we apply Lemma \ref{lem:VDCstat} with $k = 2$ to deduce that 
\[ \int_{-1}^1 e(\phi(t))\d t \ll (|\beta_2|Y^2(X+Y)^{\theta-2})^{-1/2}. \]
In this case, $1 \leq |\beta_1|Y + |\beta_2|Y^2(X+Y)^{\theta-2} \leq  C_{\theta}|\beta_2|Y^2(X+Y)^{\theta-2}$ for some $C_{\theta}$.
Therefore, 
\[ 1+|\beta_1|Y+|\beta_2|Y^2(X+Y)^{\theta-2} \leq  2C_{\theta}|\beta_2|Y^2(X+Y)^{\theta-2}. \]
Rearranging this inequality gives required estimate. 
\end{proof}

From Lemma \ref{lem:approx} we have $f(\alpha_1,\alpha_2) -f^*(\alpha_1,\alpha_2) \ll 1$ on $\mathfrak{M}$. Let $Z = R-sX^{\theta}-\theta m X^{\theta-1}$ and assume $s \geq 8$. Let 
\[ J_{\pm}^*(R,m) = \int_{\mathfrak{M}} f^*(\alpha_1, \alpha_2)^se(-m\alpha_1-\alpha_2Z)K_{\pm}(\alpha_2)\d\alpha_2 \d\alpha_1. \]
\begin{lemma} \label{lem:intapprox}
Let $\theta > 2$ and $s \geq \max\{2\theta,8\}$. For sufficiently large $R$ we have 
\[ H_{\pm}(R,m;\mathfrak{M}) = \int_{\mathfrak{M}} f(\alpha_1, \alpha_2)^se(-m\alpha_1-Z\alpha_2)K_{\pm}(\alpha_2)\d\alpha_2\d\alpha_1 = J_{\pm}^*(R,m) +O(X^{1-\theta}Y^{s-2}). \]
\end{lemma}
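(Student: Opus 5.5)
The first equality is just the definition of $H_{\pm}(R,m;\mathfrak{M})$, since $-\alpha_2(R-sX^{\theta}-\theta mX^{\theta-1}) = -Z\alpha_2$. For the approximation, I would use the standard telescoping bound
\[ |f^s-f^{*s}| \ll |f-f^*|\big(|f|^{s-1}+|f^*|^{s-1}\big), \]
together with Lemma \ref{lem:approx}, which gives $f-f^*\ll 1$ on $\mathfrak{M}$. Since $|f|\le |f^*|+O(1)$, this yields
\[ |f^s-f^{*s}| \ll |f^*|^{s-1}+1 \]
pointwise on $\mathfrak{M}$. Using $|K_{\pm}|\ll 1$ from \eqref{eq:kerbound}, the error is then
\[ \ll \int_{\mathfrak{M}}\big(|f^*(\alpha_1,\alpha_2)|^{s-1}+1\big)\d\alpha_2\d\alpha_1 . \]

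The constant term is harmless. The measure of $\mathfrak{M}$ is $\asymp Y^{-1}X^{2-\theta}$, and with $Y\asymp X^{1/2}$ this is $\asymp X^{1-\theta}Y \ll X^{1-\theta}Y^{s-2}$ once $s\ge 3$.

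For the main error term I would insert Lemma \ref{lem:apprest}, which gives
\[ |f^*|^{s-1}\ll Y^{s-1}\big(1+|\alpha_1|Y+|\alpha_2|X^{\theta-2}Y^2\big)^{-(s-1)/2}. \]
I would integrate over $\alpha_1\in[-1/2,1/2]$ and over all $\alpha_2\in\mathbb{R}$, which only enlarges the domain, and substitute $u=\alpha_1 Y$, $v=\alpha_2X^{\theta-2}Y^2$. The Jacobian contributes $Y^{-1}\cdot X^{2-\theta}Y^{-2}$, so
\[ \int_{\mathfrak{M}}|f^*|^{s-1} \ll Y^{s-1}\cdot Y^{-3}X^{2-\theta}\iint_{\mathbb{R}^2}(1+|u|+|v|)^{-(s-1)/2}\d u\d v . \]
The two-dimensional integral converges as soon as $(s-1)/2>2$, that is $s>5$, which is covered by $s\ge 8$. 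The resulting bound is $Y^{s-4}X^{2-\theta}$, and since $X\asymp Y^2$ this equals $\asymp X^{1-\theta}Y^{s-2}$, as required.

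The only step needing care is the integrability of the decay bound from Lemma \ref{lem:apprest}. That lemma gives only a power $-1/2$ in the combined variable, so one needs $s-1>4$. The hypothesis $s\ge 8$, together with $s\ge 2\theta$, leaves room to spare, and it also comfortably covers any extra factor such as $(1+|u|)^{-1/2}$ coming from the $\alpha_1$-direction. One should also check that the case distinctions inside Lemma \ref{lem:apprest} hold uniformly on $\mathfrak{M}$. They do, because that lemma is stated for all real $\beta_1,\beta_2$ with constants independent of $X$ and $Y$.
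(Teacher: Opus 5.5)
Your proposal is correct and follows the paper's proof. Both use the same ingredients: $f-f^*\ll 1$ from Lemma \ref{lem:approx}, the bound $K_{\pm}\ll 1$, the estimate $|\mathfrak{M}|\asymp X^{1-\theta}Y$, and Lemma \ref{lem:apprest} with the rescaling $u=\alpha_1Y$, $v=\alpha_2X^{\theta-2}Y^2$ together with $X\asymp Y^2$.

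The differences are cosmetic. You telescope to $|f^*|^{s-1}+1$, where the paper expands binomially and bounds each $|f^*|^{s-i}$, trivially when $s-i\le 4$. You also integrate $(1+|u|+|v|)^{-(s-1)/2}$ directly over $\mathbb{R}^2$, where the paper first factors it using $(1+u+v)^{-1}\le(1+u)^{-1/2}(1+v)^{-1/2}$.
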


\begin{proof} The first equality is by definition. By the Binomial theorem, $H_{\pm}(Y,m;\mathfrak{M})$ expands to 
\[ \sum_{i=0}^s \binom{s}{i} \int_{\mathfrak{M}} f^*(\alpha_1,\alpha_2)^{s-i}(f(\alpha_1,\alpha_2)-f^*(\alpha_1,\alpha_2))^{s-i}e(-m\alpha_1-Z\alpha_2)K_{\pm}(\alpha_2)\d\alpha_1\d\alpha_2. \]
Since $f(\alpha_1, \alpha_2)-f^*(\alpha_1,\alpha_2) \ll 1$ on $\mathfrak{M}$ and $K_{\pm} \ll 1$, we see that the above reduces to 
\[ \int_{\mathfrak{M}} f^*(\alpha_1, \alpha_2)^se(-m\alpha_1-Z\alpha_2)K_{\pm}(\alpha_2)\d\alpha_1\d\alpha_2 +O\Big(\sum_{i=1}^s \int_{\mathfrak{M}} |f^*(\alpha_1,\alpha_2)|^{s-i}\d\alpha_1\d\alpha_2\Big). \]
The first term above is exactly $J_{\pm}^*(R,m)$, so we only need to bound the errors. If $s-i \leq 4$, then by the Triangle inequality we have 
\[ \int_{\mathfrak{M}} |f^*(\alpha_1,\alpha_2)|^{s-i}\d\alpha_1\d\alpha_2 \ll Y^{s-i}|\mathfrak{M}| \ll Y^4Y^{-1}X^{-\theta+2} \ll X^{1-\theta}Y^{5} \ll X^{1-\theta}Y^{s-2} \]
since $s \geq 7$.
For $i = 1, \cdots, s-5$, we see by Lemma \ref{lem:apprest} that 
\begin{equation} \label{eq:intbound}
\begin{aligned}
\int_{\mathfrak{M}}|f^*(\alpha_1, \alpha_2)|^{s-i} \d\alpha_1\d\alpha_2 & \ll 
Y^{s-i} \int_{-1/2}^{1/2} \int_{-\infty}^{\infty} (1+|\beta_1|Y+|\beta_2|X^{\theta-2}Y^2)^{-(s-i)/2}\d\beta_1\d\beta_2  
\\ & \leq Y^{s-i} \int_{-\infty}^{\infty} \int_{-\infty}^{\infty} (1+|\beta_1|Y+|\beta_2|X^{\theta-2}Y^2)^{-(s-i)/2}\d\beta_1\d\beta_2. 
\end{aligned}
\end{equation}
Using the elementary inequality 
\begin{align} \label{eq:elemineq}
(1+u+v)^{-1} \leq (1+u)^{-1/2}(1+v)^{-1/2} \quad \text{ for } \quad u,v \geq 0,
\end{align} 
we have that the final display in \eqref{eq:intbound} is 
\begin{align*} & \leq 4Y^{s-i}\int_0^{\infty} \frac{\d\beta_1}{(1+\beta_1Y)^{\frac{s-i}{4}}} \int_0^{\infty} \frac{\d\beta_2}{(1+\beta_2X^{\theta-2}Y^2)^{\frac{s-i}{4}}} 
\\ & \ll Y^{s-i} Y^{-1}X^{2-\theta}Y^{-2}\int_0^{\infty} \frac{du_1}{(1+u_1)^{\frac{s-i}{4}}} \int_0^{\infty} \frac{du_2}{(1+u_2)^{\frac{s-i}{4}}} \ll Y^{s-i-1}X^{1-\theta}, 
\end{align*}
since by assumption $(s-i)/4 \geq 5/4$. Combining these estimates, we also see in this case that 
\[ \int_{\mathfrak{M}}|f^*(\alpha_1, \alpha_2)|^{s-i} \d\alpha_1\d\alpha_2 \ll Y^{s-i-1}X^{1-\theta} \ll X^{1-\theta}Y^{s-2}. \]
\end{proof}

By Lemma \ref{lem:intapprox}, it suffices to show $ J_{\pm}^*(R,m) \gg \tau X^{1-\theta}Y^{s-1}$. To do this, we first approximate $J_{\pm}^*(R,m)$ with another integral. Let 
\[ J(R,m) = \int_{\mathbb{R}^2} f^*(\alpha_1,\alpha_2)^se(-m\alpha_1-Z\alpha_2)\d\alpha_1\d\alpha_2. \]

\begin{lemma} \label{lem:sing} Let $\theta > 2$ and $s \geq \max\{2\theta,8\}$. For sufficiently large $R$ we have 
\[ J_{\pm}^*(R,m) = 2\tau J(R,m) + O(Y^{s-1}X^{1-\theta}\log(X)^{-1}).  \]
\end{lemma}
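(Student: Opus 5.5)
The approach is to compare $J_{\pm}^*(R,m)$ with $2\tau J(R,m)$ by splitting the difference into two error terms. Write
\[ J_{\pm}^*(R,m) - 2\tau J(R,m) = \int_{\mathfrak{M}} f^*(\alpha_1,\alpha_2)^s e(-m\alpha_1-Z\alpha_2)\big(K_{\pm}(\alpha_2)-2\tau\big)\d\alpha_2\d\alpha_1 - 2\tau\int_{\mathbb{R}^2\setminus \mathfrak{M}} f^*(\alpha_1,\alpha_2)^s e(-m\alpha_1-Z\alpha_2)\d\alpha_1\d\alpha_2. \]
The first term is where the kernel is replaced by its value at the origin, and the second is the tail where the major arcs are completed to all of $\mathbb{R}^2$. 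Note that $\mathbb{R}^2\setminus\mathfrak{M}$ contains both the region $|\alpha_1|>1/2$ and the region $|\alpha_2|$ beyond the major arc cutoff $L:=2(5\theta(\theta-1))^{-1}Y^{-1}(X+Y)^{2-\theta}\asymp X^{3/2-\theta}$. Both pieces will be bounded using Lemma \ref{lem:apprest} together with the elementary inequality \eqref{eq:elemineq}, just as in the proof of Lemma \ref{lem:intapprox}.

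For the first term, note that on $\mathfrak{M}$ we have $|\alpha_2|\le L\le X^{-\delta}$ for, say, $\delta=1/4$, since $\theta>2$. Hence \eqref{eq:kerasymp} gives $K_{\pm}(\alpha_2)-2\tau\ll \log(X)^{-1}$ uniformly there. Bounding $|f^*|^s$ via Lemma \ref{lem:apprest} and \eqref{eq:elemineq}, then integrating over all $(\beta_1,\beta_2)\in\mathbb{R}^2$ after the substitutions $u_1=\beta_1 Y$ and $u_2=\beta_2X^{\theta-2}Y^2$, yields
\[ \int_{\mathbb{R}^2}|f^*|^s \ll Y^s\cdot Y^{-1}\cdot X^{2-\theta}Y^{-2}\ll Y^{s-1}X^{1-\theta}. \]
This requires $s/4>1$, which holds since $s\ge 8$. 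The first term is therefore $O(Y^{s-1}X^{1-\theta}\log(X)^{-1})$.

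For the tail, split $\mathbb{R}^2\setminus\mathfrak{M}$ into the part with $|\alpha_1|>1/2$ and the part with $|\alpha_2|>L$. On the first part, the $\beta_1$-integral $\int_{1/2}^\infty(1+\beta_1Y)^{-s/4}\d\beta_1\ll Y^{-s/4}$ is far smaller than the full integral $Y^{-1}$. This gives $O(Y^{s-1}X^{1-\theta}\cdot Y^{1-s/4})$, a power saving since $s>4$. On the second part, the $\beta_2$-integral is
\[ \int_L^\infty(1+\beta_2X^{\theta-2}Y^2)^{-s/4}\d\beta_2 = X^{2-\theta}Y^{-2}\int_{LX^{\theta-2}Y^2}^\infty(1+u)^{-s/4}\d u. \]
Since $LX^{\theta-2}Y^2\asymp Y$, the last integral is $\ll Y^{1-s/4}$, which is again a power saving.

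Combining the two parts, the tail is $O(Y^{s-1}X^{1-\theta}Y^{1-s/4})$, which is $o(Y^{s-1}X^{1-\theta}\log(X)^{-1})$. This proves the claim.

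The main point requiring care is splitting the exponent $s$ via \eqref{eq:elemineq} into two factors of $s/4$. This must be done in a way that both keeps each one-dimensional integral convergent and retains a genuine power saving on the tails; $s\ge8$ gives $s/4\ge2$, which suffices. The hypothesis $s\ge2\theta$ is not needed here beyond guaranteeing convergence.
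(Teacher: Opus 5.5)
Your proof is correct and follows the paper's argument essentially step for step. The paper also first replaces $K_{\pm}$ by $2\tau$ on $\mathfrak{M}$ via \eqref{eq:kerasymp}, using the full-plane bound $\int_{\mathbb{R}^2}|f^*|^s\ll Y^{s-1}X^{1-\theta}$ from Lemma \ref{lem:apprest} and \eqref{eq:elemineq}. It then completes $\mathfrak{M}$ to $\mathbb{R}^2$ by bounding the two tail regions ($|\alpha_2|>U$ and $|\alpha_1|\ge 1/2$) by $O(Y^{s-2}X^{1-\theta})$. The only difference is that the paper routes this through the intermediate integral $J^*(R,m)$, which is exactly your two-term splitting.
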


\begin{proof} First, let 
\[ J^*(R,m) = \int_{\mathfrak{M}} f^*(\alpha_1, \alpha_2)^se(-m\alpha_1-Z\alpha_2)\d\alpha_2\d\alpha_1. \]
Using \eqref{eq:kerasymp}, we note that 
\[ J^*_{\pm}(R,m) = \int_{\mathfrak{M}} f^*(\alpha_1,\alpha_2)^se(-m\alpha_1-\alpha_2Z)(2\tau+O(\log(X)^{-1}))\d\alpha_2\d\alpha_1 \] 
\[ = 2\tau J^*(R,m) + O\Big((\log(X)^{-1}\int_{\mathbb{R}^2} |f^*(\alpha_1,\alpha_2)|^s\d\alpha_2\d\alpha_1\Big). \]
Using Lemma \ref{lem:apprest} and the inequality \eqref{eq:elemineq} gives 
\[ \int_{\mathbb{R}^2} |f^*(\alpha_1,\alpha_2)|^s\d\alpha_2\d\alpha_1 \ll Y^s\int_{\mathbb{R}^2} (1+|\alpha_1|Y+|\alpha_2|X^{\theta-2}Y^2)^{-s/2}\d\alpha_2\d\alpha_1 \]
\[ \ll Y^s \int_{0}^{\infty} \frac{\d\alpha_1}{(1+\alpha_1Y)^{s/4}} \int_{-0}^{\infty} \frac{\d\alpha_2}{(1+\alpha_2X^{\theta-2}Y^2)^{s/4}}. \]
Putting $\beta_1 = \alpha_1Y$ and $\beta_2 = \alpha_2X^{\theta-2}Y^2$, we see that the above is $ \ll Y^{s-1}X^{2-\theta}Y^{-2} \ll Y^{s-1}X^{1-\theta}$.
This estimate shows us that $J_{\pm}^*(R,m) = 2\tau J^*(R,m)+O(X^{1-\theta}Y^{s-1}\log(X)^{-1})$. 

The estimates above also show that $J(R,m)$ is an absolutely convergent integral, and that 
\[ J(R,m) \ll Y^s \int_{0}^{\infty} \frac{\d\beta_1}{(1+\beta_1Y)^{s/4}} \int_{0}^{\infty} \frac{\d\beta_2}{(1+\beta_2 X^{\theta-2}Y^2)^{s/4}} \ll Y^{s-1}X^{1-\theta}. \]
We then also have the bound 
\[ J(R,m)-J^*(R,m) = \int_{\mathbb{R}^2 \setminus [-1/2, 1/2) \times [-U, U]} f^*(\alpha_1,\alpha_2)^s e(-\alpha_2Z-\alpha_1m)\d\alpha_2\d\alpha_1, \]
where $U = 2(5\theta(\theta-1))^{-1}Y^{-1}(X+Y)^{-\theta+2}$. The right hand side above can be bounded as 
\begin{align*} \ll & \; Y^s\int_{-1/2}^{1/2} \int_{|\alpha_2| > U} (1+|\alpha_1|Y+|\alpha_2|(X+Y)^{\theta-2}Y^2)^{-s/2}\d\alpha_2\d\alpha_1  \\
& \; + Y^s\int_{|\alpha_1|\geq 1/2} \int_{-\infty}^{\infty} (1+|\alpha_1|Y+|\alpha_2|(X+Y)^{\theta-2}Y^2)^{-s/2}\d\alpha_2\d\alpha_1.  
\end{align*}
Call the first term $A_1$ and the second $A_2$. We estimate these terms separately. Note that the integrand is non-negative and even in both terms. 

To estimate $A_1$, the inequality \eqref{eq:elemineq} shows that 
\begin{align*} A_1 & \ll Y^s\int_{0}^{\infty} (1+\alpha_1Y)^{-s/4} \d\alpha_1 \int_{U}^{\infty} (1+\alpha_2X^{\theta-2}Y^2)^{-s/4}\d\alpha_2 \\ 
& = Y^sY^{-1}X^{2-\theta}Y^{-2}\int_{0}^{\infty}(1+\beta_1)^{-s/4}\d\beta_1 \int_{UX^{{\theta-2}}Y^{2}}^{\infty} (1+\beta_2)^{-s/4}\d\beta_2. 
\end{align*}
The integral in $\beta_1$ is $\ll 1$ since $s \geq 5$. By the definition of $U$, the integral in $\beta_2$ is 
\[ \ll (UY^2(X+Y)^{\theta-2})^{1-s/4} \ll Y^{1-s/4}. \]
Therefore, since $s \geq 8$ we see that 
\[ A_1 \ll Y^{s-2-s/4}X^{2-\theta} \ll Y^{s-s/4}X^{1-\theta} \ll Y^{s-2}X^{1-\theta}, \]
where we used $Y^2 \asymp X$.  
By the same reasoning, 
\[ A_2 \ll Y^{s}Y^{-1}X^{2-\theta}Y^{-2} \int_{Y/2}^{\infty} (1+\beta_1)^{-s/4}\d\beta_1 \int_{-\infty}^{\infty}(1+\beta_2)^{-s/4}\d\beta_2. \]
In this case the integral in $\beta_2$ is $\ll 1$ and the integral in $\beta_1$ is $\ll Y^{1-s/4}$. Thus, 
\[ A_2 \ll Y^{s-2-s/4}X^{2-\theta} \ll Y^{s-2}X^{1-\theta}. \]

Putting everything together, we see $J^*(R,m) = J(R,m)+O(X^{1-\theta}Y^{s-2}) $, and so 
\[ J^*_{\pm}(R,m) = 2\tau J^*(R,m)+O(X^{1-\theta}Y^{s-1}\log(X)^{-1}) = 2\tau J(R,m)+O(X^{1-\theta}Y^{s-1}\log(X)^{-1}). \]
\end{proof}

Collecting all estimates together from Lemmas \ref{lem:minor}, \ref{lem:trivial}, \ref{lem:intapprox} and \ref{lem:sing} , we see that 
\[ H_{\pm}(R,m) = 2\tau J(R,m) + O(X^{1-\theta}Y^{s-1}\log(X)^{-1}). \]
Therefore, to complete the proof it suffices to show that  $J(R,m) \gg X^{1-\theta}Y^{s-1}$.
By change of variables, one has 
\[ J(R,m) = Y^s \int_{\mathbb{R}^2} \Big( \int_{-1}^1 e\big(\alpha_1Yt +\alpha_2((X+Yt)^{\theta}-X^{\theta}-\theta YtX^{\theta-1})\big)dt\Big)^se(-m\alpha_1-Z\alpha_2)\d\alpha_2\d\alpha_1.  \]
Let $G(t) = (X+t)^{\theta}-X^{\theta}-\theta tX^{\theta-1}$. Putting $\beta_1 = Y\alpha_1$ and $\beta_2 = X^{\theta-2}Y^2\alpha_2$, we see 
\[ J(R,m) = X^{2-\theta}Y^{s-3}J_0(R,m), \]
where 
\[ J_0(R,m) = \int_{\mathbb{R}^2} \Big( \int_{-1}^1 e(\beta_1t+\beta_2X^{2-\theta}Y^{-2}G(Yt))dt \Big)^s e(-m\beta_1/Y-Z\beta_2X^{2-\theta}Y^{-2})\d\beta_2\d\beta_1. \]
Since $X^{2-\theta}Y^{s-3} \gg X^{1-\theta}Y^{s-1}$, it only remains to show that $J_0(R,m) \gg 1$ when $R$ is sufficiently large. Expanding, we see 
\[ J_0(R,m) = \int_{\mathbb{R}^2} \int_{[-1,1]^s} e(\beta_1u(\boldsymbol{\gamma})+\beta_2v(\boldsymbol{\gamma}))\d\mathbf{\gamma} \d\beta_1\d\beta_2, \]
where 
\[ u(\boldsymbol{\gamma}) = \sum_{i=1}^s \gamma_i-\frac{m}{Y} \quad \text{ and } \quad v(\boldsymbol{\gamma}) = \sum_{i=1}^s \frac{G(Y\gamma_i)}{X^{\theta-2}Y^2}-\frac{Z}{X^{\theta-2}Y^2}. \]

We will now analyze the solution set to the system $u(\boldsymbol{\gamma}) = v(\boldsymbol{\gamma}) = 0$. We will first show that if $m \neq 0$ then any real solution $\boldsymbol{\eta} = (\eta_1,\ldots,\eta_s)$ must be non-singular. 
We start by showing that if $m \neq 0$, then one cannot have $\eta_1 = \cdots = \eta_s$. 
Suppose by way of seeking a contradiction that $\boldsymbol{\eta}$ is a solution with $\eta_1 = \cdots = \eta_s$. Since $u(\boldsymbol{\eta}) = 0$, we must have $\eta_i = m/(sY)$ for each $i$. Then using the definition of $G(t)$ and recalling that $Z = R-sX^{\theta}-\theta m X^{\theta-1}$ gives
\begin{align*} X^{\theta-2}Y^2v(\boldsymbol{\eta}) = \sum_{i=1}^s G(Y\eta_i) - Z = \sum_{i=1}^s (X+Y\eta_i)^{\theta}-R = s(X+m/s)^{\theta}-R. 
\end{align*}
If $m \neq 0$, then since $R = sX^{\theta}$, the right hand side above cannot equal $0$ and so we cannot have $v(\boldsymbol{\eta}) = 0$, meaning that $\boldsymbol{\eta}$ is not a solution, which is a contradiction. 
Therefore, if $\boldsymbol{\eta}$ satisfies $u(\boldsymbol{\eta})=v(\boldsymbol{\eta}) = 0$ then there must be some $i,j $ with $i < j$ and $\eta_i \neq \eta_j$. 

Let $A_{i,j}$ be the $2 \times 2$ minor matrix comprising the $i$th and $j$th columns of the Jacobian matrix 
\[ \frac{\partial(u,v)}{\partial(\eta_1,\ldots,\eta_s)}. \]
Then since $\eta_i \neq \eta_j$, one has 
\[ |\det{A_{i,j}}| = \theta Y^{-1}X^{2-\theta}|(X+Y\eta_i)^{\theta-1}-X^{\theta-1}-((X+\eta_jY)^{\theta-1}-X^{\theta-1}) | \neq 0, \]
showing that the Jacobian matrix has full rank and hence any real solution $\boldsymbol{\eta}$ is non-singular. Thus, if there exists a real solution $\boldsymbol{\eta}$ in the interior of $[-1,1]^s$, its non-singularity allows us to use the Implicit Function Theorem (see, for example, Theorem 13.7 in \cite{Apo}) to conclude that there is an $(s-2)$-dimensional subspace $L$ of positive $(s-2)$-volume in a neighborhood of $\boldsymbol{\eta}$ on which $u(\boldsymbol{\gamma})=v(\boldsymbol{\gamma}) = 0$. Then, since $\boldsymbol{\eta}$ is in the interior of $[-1,1]^s$ it follows from a standard truncation and approximation argument (see, for example, Lemmas 3.6.4, 3.6.5, 3.6.6 and 3.6.7 in section 3.6.1 of \cite{Poulthesis}) that $J_0(R,m) \gg 1$. It therefore only remains to ensure the existence of a real solution $\boldsymbol{\eta}$ to the system $u(\boldsymbol{\gamma}) = v(\boldsymbol{\gamma}) = 0$ in the interior of $[-1,1]^s$. 

Let $\mathcal{B}$ be the set of all $\boldsymbol{\gamma} \in (-1,1)^s $ satisfying $u(\boldsymbol{\gamma}) = 0$. We will show that there exist $\boldsymbol{\eta_-}$ and $\boldsymbol{\eta_+} $ in $\mathcal{B}$ such that $v(\boldsymbol{\eta_-}) \leq 0 \leq v(\boldsymbol{\eta_+})$. By continuity of $v(\boldsymbol{\gamma})$, the Intermediate Value Theorem then allows us to conclude that there exists $\boldsymbol{\eta}$ on the line connecting $\boldsymbol{\eta_-}$ and $\boldsymbol{\eta_+}$ such that $v(\boldsymbol{\eta}) = 0$. Since $(-1,1)^s$ is a convex set, it follows that $\boldsymbol{\eta} \in (-1,1)^s $, meaning that $\boldsymbol{\eta}$ is in the interior of $[-1,1]^s$. Finally, since the set of $\boldsymbol{\gamma}$ satisfying $u(\boldsymbol{\gamma}) = 0$ is a hyperplane containing the points $\boldsymbol{\eta_-}$ and $\boldsymbol{\eta_+}$, the line segment connecting $\boldsymbol{\eta_-}$ and $\boldsymbol{\eta_+}$ also lies on this hyperplane, which in particular means that $u(\boldsymbol{\eta}) = 0$. Thus, $\boldsymbol{\eta}$ satisfies all needed properties, completing the proof.

Recall that $Y = c\sqrt{X}$. By Taylor expansion, we have for any $\boldsymbol{\gamma} \in \mathcal{B}$ that 
\begin{align*} \label{eq:expandv}
v(\boldsymbol{\gamma}) & = \frac{1}{X^{\theta-2}Y^2} \Big(\sum_{i=1}^s \big((X+Y\gamma_i)^{\theta}-X^{\theta}-\theta Y \gamma_i X^{\theta-1}\big)-R+sX^{\theta}+m\theta X^{\theta-1}\Big) 
\\ & = \frac{1}{X^{\theta-2}Y^2} \Big( \sum_{i=1}^s (X+Y\gamma_i)^{\theta}-R\Big) 
\\ & = \frac{1}{X^{\theta-2}Y^2} \Big( sX^{\theta}+\theta X^{\theta-1}Y\sum_{i=1}^s \gamma_i + \binom{\theta}{2}X^{\theta-2}Y^2\sum_{i=1}^s \gamma_i^2 -R+O(X^{\theta-3}Y^3)\Big)
\\ & = \frac{\theta Xm}{Y^2} +\binom{\theta}{2}\sum_{i=1}^s \gamma_i^2 + O\Big(\frac{Y}{X}\Big) = \frac{\theta m }{c^2}+\binom{\theta}{2}\| \boldsymbol{\gamma}\|_{\ell^2}^2+O(X^{-1/2}). 
\end{align*}
Let $\boldsymbol{\eta_-} = (m/(sY),\ldots,m/(sY))$. Clearly, $u(\boldsymbol{\eta_-}) = 0$ and $\boldsymbol{\eta_-} \in (-1,1)^s$ for large $X$, whereby $\boldsymbol{\eta_-} \in \mathcal{B}$. Note that $\| \boldsymbol{\eta_-}\|_{\ell^2}^2 = 4m^2/(sY^2)$. (One may determine using Lagrange Multipliers that $\boldsymbol{\eta_-}$ minimizes $\| \boldsymbol{\gamma} \|_{\ell^2}^2$ for $\boldsymbol{\gamma} \in \mathcal{B}$.) Thus, 
\[ v(\boldsymbol{\eta_-}) = \frac{\theta m }{c^2}+\binom{\theta}{2}\| \boldsymbol{\eta_-}\|_{\ell^2}^2+O(X^{-1/2}) = \frac{\theta m}{c^2}+\binom{\theta }{2}\frac{4m}{sY^2}+O(X^{-1/2}) = \frac{\theta m}{c^2}+O(X^{-1/2}).  \]

We now make use of the value of $c$ for the first time in this proof, as we have not needed to do so until now, to select $m$. Note that so far we have only made the restriction, so long as $sX+m \in \mathbb{Z}$, that $m \neq 0$. Since $c > ((\theta-1)\lfloor s/2\rfloor)^{-1/2}$ by assumption, there is a constant $\delta \in (0,1)$ such that 
\[ c > \frac{1}{\delta\sqrt{ (\theta-1)\lfloor s/2 \rfloor}}. \]
Let $\mu = \min\{1/2,\delta^{-1}-1\}$. Thus, $0 < \mu \leq 1/2$ is a fixed constant depending only on $c$. Let $m = -\{sX\} $ if $-\{sX\} \leq -\mu$ and let $m = -1-\{sX\}$ if $-\{sX\} > -\mu$. Since $\mu \leq 1$, one may check that $-1-\mu \leq m \leq -\mu$. 
Therefore, $v(\boldsymbol{\eta_-}) \leq -\theta \mu/c^2 + O(X^{-1/2}) < 0 $ and is bounded above by a negative constant independent of $X$ for all sufficiently large $X$, as required. 

Next, let $ \boldsymbol{\eta_+} = (\eta_1,\ldots,\eta_s) $ be defined by $\eta_i = \delta\sqrt{1+\mu}(-1)^{i}$ for $i = 1,\ldots,s-1$ and $\eta_s = \delta\sqrt{1+\mu}(1+(-1)^s)/2+mY^{-1}$. Splitting into the cases where $s$ is even or odd, one may check that $u(\boldsymbol{\eta_+}) = 0 $. Next, $\| \boldsymbol{\eta_+}\|_{\ell^{\infty}} \leq \delta\sqrt{1+\mu}+O(Y^{-1}) \leq \sqrt{\delta}+O(Y^{-1}) < 1$ for all sufficiently large $X$, and so $\boldsymbol{\eta_+} \in (-1,1)^s$ for large $X$, whence $\boldsymbol{\eta_+} \in \mathcal{B}$. Now, note that $\| \boldsymbol{\eta_+} \|_{\ell^2}^2 = 2\delta^2(1+\mu)\lfloor s/2 \rfloor+O(Y^{-1})$, and so 
\[ v(\boldsymbol{\eta_+}) = \frac{\theta m }{c^2}+\binom{\theta}{2}\| \boldsymbol{\eta_+}\|_{\ell^2}^2+O(X^{-1/2}) = \frac{\theta m }{c^2}+2\binom{\theta}{2}\delta^2(1+\mu)\lfloor s/2 \rfloor+O(X^{-1/2}). \]
By the lower bound on $c$, and since $m \geq -1-\mu$, we see 
\[ \frac{\theta m}{c^2}+2\binom{\theta}{2}\delta^2(1+\mu)\lfloor s/2 \rfloor > -(1+\mu)\delta^2\theta(\theta-1)\lfloor s/2 \rfloor +2\binom{\theta}{2}\delta^2(1+\mu)\lfloor s/2\rfloor = 0, \]
and so $v(\boldsymbol{\eta_+}) > 0$ for sufficiently large $X$, as needed. Thus, there is a solution $\boldsymbol{\eta} \in (-1,1)^{s}$ to $u(\boldsymbol{\eta})=v(\boldsymbol{\eta}) = 0$. Moreover, $\boldsymbol{\eta} = t\boldsymbol{\eta_-}+(1-t)\boldsymbol{\eta_+}$ for some $t \in [0,1]$, so that 
\[ \|\boldsymbol{\eta} \|_{\ell^{\infty}} \leq t\| \boldsymbol{\eta_-}\|_{\ell^{\infty}}+(1-t)\| \boldsymbol{\eta_+} \|_{\ell^{\infty}} \leq \sqrt{\delta} +O(Y^{-1}), \] 
and so $\|\boldsymbol{\eta} \|_{\ell^{\infty}}$ is bounded away from $1$ for all large $X$. 
Further, 
\[ 0 = v(\boldsymbol{\eta}) = \frac{\theta m}{c^2} +\binom{\theta}{2}\| \boldsymbol{\eta} \|_{\ell^{2}} +O(X^{-1/2}) \leq -\frac{\theta \mu}{c^2} +\binom{\theta}{2}\| \boldsymbol{\eta} \|_{\ell^{2}} +O(X^{-1/2}), \]
so that $\| \boldsymbol{\eta} \|_{\ell^{2}} \geq c_0$ for some constant $c_0 > 0$ independent of $X$ for all sufficiently large $X$. This confirms the existence of a non-singular real solution $\boldsymbol{\eta} \in (-1,1)^s $  for all sufficiently large $X$, proving that $J_0(R,m) \gg 1$. 

\bibliographystyle{plain}
\bibliography{refs} 

@article {DaemenAA1,
    AUTHOR = {Daemen, D.},
     TITLE = {Localized solutions in {W}aring's problem: the lower bound},
   JOURNAL = {Acta Arith.},
  FJOURNAL = {Acta Arithmetica},
    VOLUME = {142},
      YEAR = {2010},
    NUMBER = {2},
     PAGES = {129--143},
      ISSN = {0065-1036,1730-6264},
   MRCLASS = {11P05 (11P55)},
  MRNUMBER = {2601055},
MRREVIEWER = {Karin\ Halupczok},
       DOI = {10.4064/aa142-2-3},
       URL = {https://doi-org.ezproxy.lib.purdue.edu/10.4064/aa142-2-3},
}

@article {WrightAsymp,
    AUTHOR = {Wright, E. M.},
     TITLE = {Proportionality conditions in {W}aring's problem.},
   JOURNAL = {Math. Z.},
  FJOURNAL = {Mathematische Zeitschrift},
    VOLUME = {38},
      YEAR = {1934},
    NUMBER = {1},
     PAGES = {730--746},
      ISSN = {0025-5874,1432-1823},
   MRCLASS = {99-04},
  MRNUMBER = {1545482},
       DOI = {10.1007/BF01170669},
       URL = {https://doi-org.ezproxy.lib.purdue.edu/10.1007/BF01170669},
}

@article {DaemenBLMS,
    AUTHOR = {Daemen, D.},
     TITLE = {The asymptotic formula for localized solutions in {W}aring's problem and approximations to {W}eyl sums},
   JOURNAL = {Bull. Lond. Math. Soc.},
  FJOURNAL = {Bulletin of the London Mathematical Society},
    VOLUME = {42},
      YEAR = {2010},
    NUMBER = {1},
     PAGES = {75--82},
      ISSN = {0024-6093,1469-2120},
   MRCLASS = {11P05 (11L15)},
  MRNUMBER = {2586968},
MRREVIEWER = {Guangshi\ L\"u},
       DOI = {10.1112/blms/bdp095},
       URL = {https://doi-org.ezproxy.lib.purdue.edu/10.1112/blms/bdp095},
}

@article {Poulias1,
    AUTHOR = {Poulias, C.},
     TITLE = {Diophantine inequalities of fractional degree},
   JOURNAL = {Mathematika},
  FJOURNAL = {Mathematika. A Journal of Pure and Applied Mathematics},
    VOLUME = {67},
      YEAR = {2021},
    NUMBER = {4},
     PAGES = {949--980},
      ISSN = {0025-5793,2041-7942},
   MRCLASS = {11D75 (11D72 11L07 11P55)},
  MRNUMBER = {4311791},
MRREVIEWER = {Weiping\ Li},
       DOI = {10.1112/mtk.12112},
       URL = {https://doi-org.ezproxy.lib.purdue.edu/10.1112/mtk.12112},
}

@article{Wright1,
  title={THE REPRESENTATION OF A NUMBER AS A SUM OF FOUR ‘ALMOST EQUAL’ SQUARES},
  author={Wright, E. M. },
  journal={Quarterly Journal of Mathematics},
  volume={8},
  year={1937},
  pages={278-279},
  url={https://api.semanticscholar.org/CorpusID:121158126}
}

@book {Vbook,
    AUTHOR = {Vaughan, R. C.},
     TITLE = {The {H}ardy-{L}ittlewood method},
    SERIES = {Cambridge Tracts in Mathematics},
    VOLUME = {125},
   EDITION = {Second},
 PUBLISHER = {Cambridge University Press, Cambridge},
      YEAR = {1997},
     PAGES = {xiv+232},
      ISBN = {0-521-57347-5},
   MRCLASS = {11P55 (11L15 11P05)},
  MRNUMBER = {1435742},
MRREVIEWER = {D.\ R.\ Heath-Brown},
       DOI = {10.1017/CBO9780511470929},
       URL = {https://doi.org/10.1017/CBO9780511470929},
}

@incollection {FreemanAsymp,
    AUTHOR = {Freeman, D. E. },
     TITLE = {Asymptotic lower bounds and formulas for {D}iophantine
              inequalities},
 BOOKTITLE = {Number theory for the millennium, {II} ({U}rbana, {IL}, 2000)},
     PAGES = {57--74},
 PUBLISHER = {A K Peters, Natick, MA},
      YEAR = {2002},
      ISBN = {1-56881-146-2},
   MRCLASS = {11D75 (11P55)},
  MRNUMBER = {1956244},
MRREVIEWER = {Scott\ T.\ Parsell},
}

@article {FreemanLB,
    AUTHOR = {Freeman, D. E. },
     TITLE = {Asymptotic lower bounds for {D}iophantine inequalities},
   JOURNAL = {Mathematika},
  FJOURNAL = {Mathematika. A Journal of Pure and Applied Mathematics},
    VOLUME = {47},
      YEAR = {2000},
    NUMBER = {1-2},
     PAGES = {127--159},
      ISSN = {0025-5793},
   MRCLASS = {11P55 (11D75)},
  MRNUMBER = {1924493},
MRREVIEWER = {R.\ C.\ Baker},
       DOI = {10.1112/S0025579300015771},
       URL = {https://doi-org.ezproxy.lib.purdue.edu/10.1112/S0025579300015771},
}

@inproceedings {WooleyDHF,
    AUTHOR = {Wooley, T. D.},
     TITLE = {On {D}iophantine inequalities: {F}reeman's asymptotic
              formulae},
 BOOKTITLE = {Proceedings of the {S}ession in {A}nalytic {N}umber {T}heory
              and {D}iophantine {E}quations},
    SERIES = {Bonner Math. Schriften},
    VOLUME = {360},
     PAGES = {1--32},
 PUBLISHER = {Univ. Bonn, Bonn},
      YEAR = {2003},
   MRCLASS = {11D75 (11P55)},
  MRNUMBER = {2075639},
MRREVIEWER = {Scott\ T.\ Parsell},
}

@article {ArkhZhit,
    AUTHOR = {Arkhipov, G. I. and Zhitkov, A. N.},
     TITLE = {Waring's problem with nonintegral exponent},
   JOURNAL = {Izv. Akad. Nauk SSSR Ser. Mat.},
  FJOURNAL = {Izvestiya Akademii Nauk SSSR. Seriya Matematicheskaya},
    VOLUME = {48},
      YEAR = {1984},
    NUMBER = {6},
     PAGES = {1138--1150},
      ISSN = {0373-2436},
   MRCLASS = {11P05},
  MRNUMBER = {772109},
MRREVIEWER = {Ekkehard\ Kr\"atzel},
}

@article {Des,
    AUTHOR = {Deshouillers, J.-M.},
     TITLE = {Probl\`eme de {W}aring avec exposants non entiers},
   JOURNAL = {Bull. Soc. Math. France},
  FJOURNAL = {Bulletin de la Soci\'et\'e{} Math\'ematique de France},
    VOLUME = {101},
      YEAR = {1973},
     PAGES = {285--295},
      ISSN = {0037-9484},
   MRCLASS = {10J10},
  MRNUMBER = {342477},
MRREVIEWER = {B.\ Garrison},
       URL = {http://www.numdam.org/item?id=BSMF_1973__101__285_0},
}

@article {Watt,
    AUTHOR = {Watt, N.},
     TITLE = {Exponential sums and the {R}iemann zeta-function. {II}},
   JOURNAL = {J. London Math. Soc. (2)},
  FJOURNAL = {Journal of the London Mathematical Society. Second Series},
    VOLUME = {39},
      YEAR = {1989},
    NUMBER = {3},
     PAGES = {385--404},
      ISSN = {0024-6107,1469-7750},
   MRCLASS = {11L40 (11M06)},
  MRNUMBER = {1002452},
MRREVIEWER = {Matti\ Jutila},
       DOI = {10.1112/jlms/s2-39.3.385},
       URL = {https://doi-org.ezproxy.lib.purdue.edu/10.1112/jlms/s2-39.3.385},
}

@book {GraKol,
    AUTHOR = {Graham, S. W. and Kolesnik, G.},
     TITLE = {van der {C}orput's method of exponential sums},
    SERIES = {London Mathematical Society Lecture Note Series},
    VOLUME = {126},
 PUBLISHER = {Cambridge University Press, Cambridge},
      YEAR = {1991},
     PAGES = {vi+120},
      ISBN = {0-521-33927-8},
   MRCLASS = {11L07},
  MRNUMBER = {1145488},
MRREVIEWER = {Zun\ Shan},
       DOI = {10.1017/CBO9780511661976},
       URL = {https://doi-org.ezproxy.lib.purdue.edu/10.1017/CBO9780511661976},
}

@article {Robert,
    AUTHOR = {Robert, O.},
     TITLE = {On van der {C}orput's {$k$}-th derivative test for exponential
              sums},
   JOURNAL = {Indag. Math. (N.S.)},
  FJOURNAL = {Koninklijke Nederlandse Akademie van Wetenschappen.
              Indagationes Mathematicae. New Series},
    VOLUME = {27},
      YEAR = {2016},
    NUMBER = {2},
     PAGES = {559--589},
      ISSN = {0019-3577,1872-6100},
   MRCLASS = {11L07 (11L03 40A05 42B35)},
  MRNUMBER = {3479173},
MRREVIEWER = {Tatjana\ L.\ Todorova},
       DOI = {10.1016/j.indag.2015.11.009},
       URL = {https://doi-org.ezproxy.lib.purdue.edu/10.1016/j.indag.2015.11.009},
}

@book {Vino,
    AUTHOR = {Vinogradov, I. M.},
     TITLE = {The method of trigonometrical sums in the theory of numbers},
      NOTE = {Translated from the Russian, revised and annotated by K. F.
              Roth and Anne Davenport,
              Reprint of the 1954 translation},
 PUBLISHER = {Dover Publications, Inc., Mineola, NY},
      YEAR = {2004},
     PAGES = {x+180},
      ISBN = {0-486-43878-3},
   MRCLASS = {11Lxx (01A75)},
  MRNUMBER = {2104806},
}

@article {WooNEC,
    AUTHOR = {Wooley, T. D.},
     TITLE = {Nested efficient congruencing and relatives of {V}inogradov's
              mean value theorem},
   JOURNAL = {Proc. Lond. Math. Soc. (3)},
  FJOURNAL = {Proceedings of the London Mathematical Society. Third Series},
    VOLUME = {118},
      YEAR = {2019},
    NUMBER = {4},
     PAGES = {942--1016},
      ISSN = {0024-6115,1460-244X},
   MRCLASS = {11L15 (11L05 11P55)},
  MRNUMBER = {3938716},
MRREVIEWER = {Moubariz\ Z.\ Garaev},
       DOI = {10.1112/plms.12204},
       URL = {https://doi.org/10.1112/plms.12204},
}

@book {DavBook,
    AUTHOR = {Davenport, H.},
     TITLE = {Analytic methods for {D}iophantine equations and {D}iophantine
              inequalities},
    SERIES = {Cambridge Mathematical Library},
   EDITION = {Second},
      NOTE = {With a foreword by R. C. Vaughan, D. R. Heath-Brown and D. E.
              Freeman,
              Edited and prepared for publication by T. D. Browning},
 PUBLISHER = {Cambridge University Press, Cambridge},
      YEAR = {2005},
     PAGES = {xx+140},
      ISBN = {0-521-60583-0},
   MRCLASS = {11P05 (11D72 11P55)},
  MRNUMBER = {2152164},
       DOI = {10.1017/CBO9780511542893},
       URL = {https://doi.org/10.1017/CBO9780511542893},
}

@article {BDG,
    AUTHOR = {Bourgain, J. and Demeter, C. and Guth, L.},
     TITLE = {Proof of the main conjecture in {V}inogradov's mean value
              theorem for degrees higher than three},
   JOURNAL = {Ann. of Math. (2)},
  FJOURNAL = {Annals of Mathematics. Second Series},
    VOLUME = {184},
      YEAR = {2016},
    NUMBER = {2},
     PAGES = {633--682},
      ISSN = {0003-486X,1939-8980},
   MRCLASS = {11P05 (11N25)},
  MRNUMBER = {3548534},
MRREVIEWER = {Ben\ Joseph\ Green},
       DOI = {10.4007/annals.2016.184.2.7},
       URL = {https://doi-org.ezproxy.lib.purdue.edu/10.4007/annals.2016.184.2.7},
}

@book {Stein,
    AUTHOR = {Stein, E. M.},
     TITLE = {Harmonic analysis: real-variable methods, orthogonality, and
              oscillatory integrals},
    SERIES = {Princeton Mathematical Series},
    VOLUME = {43},
      NOTE = {With the assistance of Timothy S. Murphy,
              Monographs in Harmonic Analysis, III},
 PUBLISHER = {Princeton University Press, Princeton, NJ},
      YEAR = {1993},
     PAGES = {xiv+695},
      ISBN = {0-691-03216-5},
   MRCLASS = {42-02 (35Sxx 43-02 47G30)},
  MRNUMBER = {1232192},
MRREVIEWER = {Michael\ Cowling},
}

@article {VDC,
    AUTHOR = {Van der Corput, J. G.},
     TITLE = {Zur {M}ethode der station\"aren {P}hase. {E}rste {M}itteilung
              {E}infache {I}ntegrale},
   JOURNAL = {Compositio Math.},
  FJOURNAL = {Compositio Mathematica},
    VOLUME = {1},
      YEAR = {1935},
     PAGES = {15--38},
      ISSN = {0010-437X,1570-5846},
   MRCLASS = {99-04},
  MRNUMBER = {1556874},
       URL = {http://www.numdam.org/item?id=CM_1935__1__15_0},
}

@book {Apo,
    AUTHOR = {Apostol, T. M.},
     TITLE = {Mathematical analysis},
   EDITION = {Second},
 PUBLISHER = {Addison-Wesley Publishing Co., Reading, Mass.-London-Don
              Mills, Ont.},
      YEAR = {1974},
     PAGES = {xvii+492},
   MRCLASS = {26-01 (28-01)},
  MRNUMBER = {344384},
}

@article {WW,
    AUTHOR = {Wei, B. and Wooley, T. D.},
     TITLE = {On sums of powers of almost equal primes},
   JOURNAL = {Proc. Lond. Math. Soc. (3)},
  FJOURNAL = {Proceedings of the London Mathematical Society. Third Series},
    VOLUME = {111},
      YEAR = {2015},
    NUMBER = {5},
     PAGES = {1130--1162},
      ISSN = {0024-6115,1460-244X},
   MRCLASS = {11L07 (11P05 11P32 11P55)},
  MRNUMBER = {3477231},
MRREVIEWER = {Xiumin\ Ren},
       DOI = {10.1112/plms/pdv048},
       URL = {https://doi-org.ezproxy.lib.purdue.edu/10.1112/plms/pdv048},
}

@article {Biggs,
    AUTHOR = {Biggs, K. D.},
     TITLE = {Almost equal summands in {W}aring's problem with shifts},
   JOURNAL = {Monatsh. Math.},
  FJOURNAL = {Monatshefte f\"ur Mathematik},
    VOLUME = {188},
      YEAR = {2019},
    NUMBER = {1},
     PAGES = {31--35},
      ISSN = {0026-9255,1436-5081},
   MRCLASS = {11D75 (11P05)},
  MRNUMBER = {3895390},
MRREVIEWER = {Yuchao\ Wang},
       DOI = {10.1007/s00605-018-1178-7},
       URL = {https://doi-org.ezproxy.lib.purdue.edu/10.1007/s00605-018-1178-7},
}

@phdthesis{Poulthesis,
  title        = {On {D}iophantine problems involving fractional powers of integers},
  author       = {Poulias, K.},
  year         = 2021,
  month        = {},
  note         = {Available at \url{https://research-information.bris.ac.uk/en/studentTheses/on-diophantine-problems-involving-fractional-powers-of-integers/}},
  school       = {University of Bristol},
  type         = {PhD thesis}
}

@phdthesis{Kuf,
  title        = {A real effective version of the Fre\u{i}man-Scourfield Theorem},
  author       = {K\"{u}fner, T.S.},
  year         = 2025,
  month        = {},
  note         = {Available at \url{http://dx.doi.org/10.53846/goediss-11614}},
  school       = {Georg-August University School of Science, G\"{o}ttingen},
  type         = {PhD thesis}
}

@article {Sal,
    AUTHOR = {Salmensuu, J.},
     TITLE = {On the {W}aring-{G}oldbach problem with almost equal summands},
   JOURNAL = {Mathematika},
  FJOURNAL = {Mathematika. A Journal of Pure and Applied Mathematics},
    VOLUME = {66},
      YEAR = {2020},
    NUMBER = {2},
     PAGES = {255--296},
      ISSN = {0025-5793,2041-7942},
   MRCLASS = {11P32 (11B30 11P05 11P55)},
  MRNUMBER = {4130325},
MRREVIEWER = {Bin\ Wei},
       DOI = {10.1112/mtk.12019},
       URL = {https://doi-org.ezproxy.lib.purdue.edu/10.1112/mtk.12019},
}

\noindent\textsc{Department of Mathematics, Purdue University, West Lafayette, IN, USA.}
\vspace{.03in}
\newline\noindent\textit{Email address}: bharga37@purdue.edu.
\end{document}